\documentclass[12pt,reqno,a4paper]{amsart}
\usepackage{latexsym,bm}
\usepackage{amsmath,amssymb,cases}
\usepackage{amsthm}
\usepackage{xcolor}
\usepackage{enumerate}
\usepackage{caption}
\usepackage{graphicx,subfig}
\usepackage[numbers,sort&compress]{natbib}
\usepackage{accents}

\usepackage{geometry}
\geometry{verbose,tmargin=2.6cm,bmargin=2.6cm,lmargin=2.6cm,rmargin=2.6cm}

\usepackage{float}

\usepackage{ulem}
\usepackage[thicklines]{cancel}
\usepackage{relsize}
\usepackage{exscale}

\newtheorem{The}{Theorem}[section]
\newtheorem{Lem}[The]{Lemma}

\newtheorem{Def}[The]{Definition}
\newtheorem{Pro}[The]{Proposition}

\theoremstyle{definition}

\graphicspath{{fig/}}

\numberwithin{equation}{section}
\numberwithin{figure}{section}


\pagestyle{plain}


\begin{document}
\captionsetup[figure]{labelfont={default},labelformat={default},labelsep=period,name={Fig.}}

\title[Relaxation limit of pressureless Euler-Poisson equations]
{Relaxation limit of pressureless Euler-Poisson equations
}

\author{Guirong Tang}
\address{Guirong Tang: School of Mathematical Sciences, Capital Normal University, Beijing 100048, China}
\email{\tt tangguirong@amss.ac.cn}

\begin{abstract}
We investigate the relaxation problem for the one-dimensional pressureless Euler--Poisson equations with the initial density being a finite Radon measure.
The entropy solution of this linearly degenerate hyperbolic system converges to the weak solution of part of drift--diffusion equations without diffusion term when the momentum relaxation time tends to zero.
Independent of compactness, our strategy is to construct the formula of solutions to both systems and directly obtain the convergent result.
Furthermore, the uniqueness of entropy solution to pressureless Euler--Poisson equations is also proved by Oleinik condition and the initially weak continuity of kinetic energy measure.
\end{abstract}

\subjclass[2020]{35A01,35A02,35D30,35L65,35L67.}
\keywords{Euler--Poisson equations, drift--diffusion equations, relaxation limit, pressureless gas dynamics, linearly degenerate hyperbolic.}
\date{\today}
\maketitle

\tableofcontents
 
\thispagestyle{empty}

\section{Introduction}
We are concerned with the following one-dimensional pressureless Euler--Poisson equations with relaxation term:
\begin{equation}\label{maineq}
\begin{cases}
\rho_t +(\rho u) _x=0,\\[1mm]
(\rho u)_t+(\rho u^2)_x=\kappa\rho \Phi _x-\frac{\rho u}{\tau},\\[1mm]  
\Phi_{xx}=\rho
\end{cases}
\end{equation}
for $ (x,t) \in \mathbb{R}\times\mathbb{R}^+:=({-}\infty,\infty)\times(0,\infty)$.   
Here $\rho\geq 0$ is the density of mass, $u$ denotes the velocity, and the parameter $\kappa<0$.
For the Poisson source term $\kappa\rho \Phi _x$, 
$\Phi$ represents the gravitational potential of the dusty stars when $\kappa<0$ 
(for the case that $\kappa>0$, $\Phi$ represents the potential of plasma electric fields).
Since the parameter $\kappa<0$ can be rescaled to be any negative number, 
without loss of generality, we take $\kappa={-}1$ throughout this paper.
$\tau\in (0,1]$ is the relaxation time which means the average time required for the state being relaxed to the equilibrium.   

\vspace{1pt}
Since system $\eqref{maineq}$ is linearly degenerate hyperbolic, combining with the attractive Poisson force, then
no matter how smooth the initial data is (even for constant initial data), 
the solutions develop singularities and form $\delta$-shocks generically in a finite time.
Thus, it is natural and necessary to understand the solutions in the sense of Radon measures.
We consider the general Cauchy initial data as follows:
\begin{equation}\label{ID}
(\rho,u)|_{t=0}=(\rho_0,u_0) \in (\mathcal{M}(\mathbb{R}),L^{\infty}_{\rho_0}(\mathbb{R})),
\qquad\,\,\,
M:=\int_{\mathbb{R}} \rho_0\, {\rm d}x<\infty,
\end{equation}
where $\mathcal{M}(\mathbb{R})$ is the space of finite Radon measures
and $L^{\infty}_{\rho_0}(\mathbb{R})$ represents the space of bounded measurable functions 
with respect to the measure $\rho_0$.

\smallskip
The relaxation limit is well-known in asymptotic analysis and singular perturbation theories for many physical models with relaxation time \cite{GBW}.
As the relaxation time tends to zero, the limit can be from first-order hyperbolic systems to hyperbolic system (see \cite{Liu1987, Na1997} and references therein), 
or to parabolic system by slow time scaling \cite{MR2000}.
To the best of our knowledge, there is no result on relaxation limit from linearly degenerate hyperbolic system. 

The slow time scaling is $t=\tau t'$. If we introduce 
\begin{align}\label{2.0}
\rho^{\tau}(x,t)=\rho(x,\frac{t}{\tau}),\quad u^{\tau}(x,t)=\frac{1}{\tau}u(x,\frac{t}{\tau}),\quad \Phi^{\tau}(x,t)=\Phi(x,\frac{t}{\tau}),
\end{align}
then the degenerate hyperbolic system \eqref{maineq}--\eqref{ID} ($\kappa=-1$) transforms into
\begin{equation}\label{2.1}
\begin{cases}
\rho^{\tau}_t +(\rho^{\tau} u^{\tau})_x=0,\\[1mm]
\tau^2\big((\rho^{\tau} u^{\tau})_t+(\rho^{\tau} (u^{\tau})^2)_x \big)=-\rho^{\tau} \Phi^{\tau}_x-\rho^{\tau} u^{\tau} ,\\[1mm]  
\Phi^{\tau}_{xx}=\rho^{\tau}.
\end{cases}
\end{equation}
with initial data
\begin{align}
\rho^{\tau}_0(x):=\rho^{\tau}(x,0)=\rho_0(x), \qquad
u^{\tau}_0(x):=u^{\tau}(x,0)=\frac{1}{\tau} u_0(x).
\end{align}

Formally, as $\tau \rightarrow 0$, if denoting the limits of $(\rho^{\tau},  u^{\tau}, \Phi^{\tau})$ as $(\bar{\rho},\bar{u}, \bar{\Phi})$, 
one gets that the formal limiting system of \eqref{2.1} is
\begin{align}\label{2.2}
\begin{cases}
\bar{\rho}_t+(\bar{\rho} \, \bar{u})_x=0,\\[1mm]
\bar{\rho}\, (\bar{\Phi}_x+\bar{u})=0,\\[1mm]
\bar{\Phi}_{xx}=\bar{\rho}.
\end{cases}
\end{align}
Substituting $\eqref{2.2}_2$ into $\eqref{2.2}_1$, as a result, \eqref{2.2} yields a new model 
\begin{align}\label{2.2a}
\begin{cases}
\bar{\rho}_t-(\bar{\rho}\, \bar{\Phi}_x)_x=0,\\[1mm]
\bar{\Phi}_{xx}=\bar{\rho}.
\end{cases}
\end{align}

To better understand this model, we introduce the  drift--diffusion equations
\begin{align}\label{2.3}
\begin{cases}
\rho_t+(P_x-\rho \Phi_x)_x=0,\\[1mm]
\Phi_{xx}=\rho,
\end{cases}
\end{align}
where $P$ is the pressure which is a function of density $\rho$. 
The classical drift--diffusion equations is to explain the dynamics of electrons with the interaction force being repulsive,
which is different with our attractive model \eqref{2.3}, so that the signs of $\rho \Phi_x$ and $P_x$ are reversed. 
In semiconductor, the current density $P_x-\rho \Phi_x$ is the sum of drift current $-\rho \Phi_x$ and the diffusion current $P_x$, which explains the name of the model \cite{JA2009}.

Since we consider the problem in pressureless backward, the pressure term $P$ is vanishing, so that the diffusion term is zero, and the drift--diffusion equations \eqref{2.3} transforms into \eqref{2.2a}, which is called ``{\it drift equations}" in this paper.

\smallskip

Now we state some related results before.
Concisely, for pressureless Euler system, it is well-known that Huang-Wang \cite{huang2001well} construct the formula of entropy solution and prove its uniqueness by Oleinik condition with the weak continuity of energy measure.
For pressureless Euler system with damping, Ha-Huang-Wang \cite{HHW2014} get the same result as above. 
Their unique result does not depend on the weak continuity of energy measure because the initial density is a Lebesgue-measurable function.
After two years, by adding the weak continuity condition, Jin \cite{Jin2016} extends their result to the case that initial density is a Radon measure.
For pressureless Euler--Poisson system without relaxation, Cao-Huang-Tang \cite{CHT2025} prove the existence, uniqueness and asymptotic behaviour of entropy solution.

There are other methods to study the pressureless system, such as generalized variational principle \cite{E1996}, scalar conservation laws \cite{BG1998, LT2023, NT2008, NT2015}, Lagrangian solution with differential inclusion \cite{NS2009, BG2013}, metric projection \cite{CSW2015}, probability measure on trajectory \cite{H2020, ZQY2024}, and so on \cite{B1994, BJ1999, Z1970}. 

For relaxation limit, the main results concentrate on the Euler--Poisson system with pressure.
In one dimensional case, Marcati--Natalini \cite{MN1995} consider the model for semiconductors and prove the limit solutions satisfy the classical drift-diffusion equation by compensated compactness.
Alì--Bini--Rionero \cite{ABR2000} consider the problem with the initial data being a perturbations of a stationary solution of the drift-diffusion equations and get the decay rate.
Hsiao--Zhang \cite{HZ2000} consider the initial-boundary value problem by Godunov scheme.
For spherically symmetric case, D. H. Wang \cite{Wang2001} show the general large weak entropy solutions converge to the solution of a generalized drift-diffusion equation. 
For $3-D$ model, Lattazio--Marcati \cite{LM1999} prove the zero relaxation limit for isentropic case 
and Li-Peng-Zhao \cite{LPZ2021} get the convergence rate in torus for smooth periodic initial data sufficiently close to constant equilibrium state.

\smallskip

We will prove (Theorem \ref{thm3}) that the unique entropy solution of pressureless Euler--Poisson system \eqref{maineq} converges to the weak solution and the velocity of drift equations \eqref{2.2a} as relaxation time $\tau \rightarrow 0$.
Since the solutions to both systems are Radon measure, which is very singular, our strategy is to construct the formulas of solutions and analyze the convergence directly.

Motivated by Huang-Wang \cite{huang2001well}, we exploit the similar idea to construct the formula of entropy solution to pressureless Euler--Poisson equations with relaxation.
We introduce the first generalized potential $F(y;x,t)$ as in $\eqref{potentialF}$ 
to define the formula for the mass $m(x,t)$, velocity $u(x,t)$
and the momentum $q(x,t)$. 
As the collision of particles is inelastic, the machanical energy is not conserved, so that
we have to track the particle trajectory to formulate the well-defined energy.
Moreover, we need to deal with the Poisson source term and the relaxation term in the context of Radon measures, 
which is done via introducing the second and third generalized potentials $G(y;x,t)$ 
and $H(y;x,t)$ as in $\eqref{potentialG}$ and $\eqref{potentialH}$, respectively. 

Compared with the pressureless Euler--Poisson system without relaxation \cite{CHT2025}, the damping term can not prevent the formation of $\delta$-shock, but can bound the velocity in the whole space. 
Thus  our uniqueness result (Theorem \ref{UniThm}) lies in $\mathbb{R}\times \mathbb{R}^+$.
For the proof of the uniqueness, we show that any entropy solution can be expressed by the initial data and thus equals to the solution constructed in the existence theorem (Theorem \ref{ExisThm}).
Our proof also depends on the weak continuity of energy measure $\rho u^2$ at $t=0$, which is effective since the Poisson force is attractive.

The drift equations \eqref{2.2a} is absolutely different with the drift--diffusion equations \eqref{2.3}.
To the best of our knowledge, there is no result about the existence of solution to drift equations.
Since the pressure is vanishing, there is no force to avoid singularities, so that  Dirac measure exists in the weak solutions.
Based on this observation, we employ the similar methods to construct the formula of weak solution. 
Besides, we also define the velocity $\bar{u}(x,t)$ \eqref{2.13b} of drift equations, which is apparent from the equation \eqref{2.7}.

In order to prove the relaxation limit (Theorem \ref{thm3}), the key point is to show that the generalized potential $F(y;x,t)$ of Euler--Poisson system converges to the potential $\bar{F}(y;x,t)$ of drift equations by slow time scaling as relaxation time $\tau\rightarrow 0$. 
Then by the formula of solutions and the Radon-Nikodym derivative for momentum with respect to mass, the weak convergence of density and the convergence of velocity are directly obtained.

\vspace{2pt}
This paper is organized as follows: 
In \S 2.1 and \S 2.2, we outline the derivation of the formula for solutions 
to pressureless Euler--Poisson system and drift equations respectively.
The theorems about existence or uniqueness are also stated here.
In \S 2.3, we present the main theorem about the relaxation limit.
In \S 3, we rigorously derive the formula for entropy solutions to pressureless Euler--Poisson system
via constructing the mass, velocity, momentum, and energy 
by using the generalized potentials, and prove the existence theorem. 
To maintain coherence in reading and emphasize the relaxation limit, the proof of uniqueness is placed in the final section \S 6.
In \S 4, we prove the formula of weak solutions to drift equations.
In \S 5, the proof of relaxation limit is presented.

\section{Formula of Solutions and Main Theorems}
In this section, we present the idea to construct the formula of solutions and show main  theorems. 
Concisely, in \S 2.1, the cumulative distribution function $m(x,t)$ is introduced to rewrite the Euler--Poisson equations, then the definitions of weak solution and entropy solution are present.
From the formula of smooth characteristic, we construct the generalized potential $F(y;x,t)$ and analyze its minimum point, which is the key to define the mass, momentum, and energy.
At last, we present the main theorems about existence and uniqueness.
In \S 2.2, we construct the weak solution of drift equations by similar methods and present the existence theorem.
In \S 2.3, we define the physical velocity of drift equations and show the relaxation limit of density as well as velocity in Theorem \ref{thm3}.

\subsection{Formula of solutions to pressureless Euler--Poisson equations}
Firstly, we determine the gravitational potential $\Phi$ 
via $m(x,t)$, which is the cumulative distribution function of $\rho$.
From $\eqref{maineq}$--$\eqref{ID}$, when $\rho$ is Lebesgue integrable, 
the mass $m(x,t)$ is given by
$$m(x,t)=\int_{{-}\infty}^{x}\rho(\eta,t) \,{\rm d}\eta\in [0,M].$$
It follows from the Poisson equation $\eqref{maineq}_3$ that, $\Phi_x$ can be realized by \begin{equation*}
\Phi_x(x,t)=\frac{1}{2}\Big(\int_{{-}\infty}^{x} \rho(\eta,t)\,{\rm d}\eta
-\int_{x}^{\infty}\rho(\eta,t)\,{\rm d}\eta\Big)
=m(x,t)-\frac{1}{2}M.
\end{equation*}

In general, for the case that $\rho \in \mathcal{M}(\mathbb{R})$, 
by the relation of $\Phi_x(x,t)$ and $m(x,t)$ as above, 
it is consistent to let 
\begin{equation}\label{mPhix}
m(x,t):= \int_{{-}\infty}^{x{-}} \rho(\eta,t)\, {\rm d}\eta, \qquad\,\,\,
\Phi_x(x,t):=\frac{1}{2}\Big(\int_{{-}\infty}^{x{-}}
-\int_{x{+}}^{\infty}\Big) \rho(\eta,t)\,{\rm d}\eta;
\end{equation}
since $\partial_x m(x,t)=\partial_{xx}\Phi=\rho$ in the sense of BV derivative ({\it cf.} \cite{vol1967spaces})
and $\Phi_x({\pm}\infty)={\pm}\frac{M}{2}$, 
it is direct to check from $\eqref{mPhix}$ that 
\begin{equation}\label{widem}
\Phi_x(x,t)=\frac{1}{2}\big(m(x{-},t)+m(x{+},t)-M\big)=:\widetilde{m}(x,t).
\end{equation}
Then, by using $\eqref{mPhix}$--$\eqref{widem}$, 
system $\eqref{maineq}$ transforms into 
\begin{equation}\label{mmaineq1}
\begin{cases}
(m_x)_t+(m_x u)_x=0,\\[1mm]
(m_x u)_t+(m_x u^2)_x=-\widetilde{m}m_x-\frac{u m_x}{\tau}.
\end{cases}
\end{equation} 
For $\eqref{mmaineq1}_1$, by multiplying a test function $\psi\in C_c^{\infty}(\mathbb{R}\times\mathbb{R}^+)$ and integration by parts for twice, 
\begin{align*}
\iint \psi_{xt} m \,{\rm d}x {\rm d}t-\iint \psi_x u m_x \,{\rm d}x {\rm d}t=0.
\end{align*}
Thus if we denote $\varphi=\psi_x$, the system \eqref{mmaineq1} can also formally be written as 
\begin{equation}\label{mmaineq}
\begin{cases}
m_t+u m_x=0,\\[1mm]
(m_x u)_t+(m_x u^2)_x=-\widetilde{m}m_x-\frac{u m_x}{\tau},
\end{cases}
\end{equation} 
and their solutions in the sense of distribution are equivalent.

\smallskip
Motivated by Huang--Wang \cite{huang2001well}, we use the Lebesgue-Stieltjes integral to define the weak solutions, 
and use the Oleinik entropy condition associating with the initial weak continuity of energy to define the entropy solutions.

\begin{Def}[Weak Solution I]\label{weakdef} 
Let $m(x,t) \in [0,M]$ be nondecreasing in $x$ and 
$u(x,t)$ be bounded and measurable to $m_x$.
Assume that the measures $m_x$ and $um_x$ are weakly continuous in $t$. 
Then, $(\rho,u)=(m_x,u)$ $(resp., (m,u))$ is called to be a weak solution of system $\eqref{maineq}$ $(resp.,\, $\eqref{mmaineq}$)$ 
if, for any $\varphi,\psi \in C_c^{\infty}(\mathbb{R}\times\mathbb{R}^+)$,  
\begin{align}\label{weakeq}
\begin{cases}
\displaystyle\iint \varphi _t m \,{\rm d}x {\rm d}t-\iint \varphi u \,{\rm d}m {\rm d}t=0,\\[4mm]
\displaystyle\iint \psi_tu+\psi_xu^2 \,{\rm d}m {\rm d}t=\iint (\widetilde{m}+\frac{u}{\tau}) \psi \,{\rm d}m {\rm d}t;
\end{cases}
\end{align}
and as $t\rightarrow 0{+}$, 
the measures $\rho=m_x$ and $\rho u=um_x$ weakly converge to $\rho_0$ and $\rho_0 u_0$ 
respectively. 
Here, the $\iint \cdots {\rm d}m{\rm d}t$ denotes the Lebesgue-Stieltjes integral 
{\rm(}{\it cf.} {\rm \cite{carter2000lebesgue}}{\rm)}.
\end{Def}

\begin{Def}[Entropy Solution]\label{entropylem} 
A weak solution $(\rho,u)$ is called to be an entropy solution of system $\eqref{maineq}$ 
if the following two hold{\rm:}
\begin{itemize}
\item [(i)] For any $x_1<x_2$ and {\it a.e.} $t>0$, 
\begin{equation}\label{ux2ux1}
\frac{u(x_2,t)-u(x_1,t)}{x_2-x_1} \leq \frac{1}{t};
\end{equation}
    
\item [(ii)] The measure $\rho u^2$ weakly converges to $\rho_0 u_0^2$ as $t \rightarrow 0{+}$.
\end{itemize}
\end{Def}

\vspace{2pt}
If we consider the smooth solutions of \eqref{mmaineq}, then the path $X(\eta,t)$ of the particle originated from $\eta$ with initial velocity $u_0(\eta)$ satisfies
\begin{align}\label{2.1c}
\begin{cases}
\frac{{\rm d}X(\eta,t)}{{\rm d}t}=u(X(\eta,t),t),  \\[1mm]
\frac{{\rm d}u(X(\eta,t),t)}{{\rm d}t}=-\widetilde{m}(X(\eta,t),t)-\frac{u(X(\eta,t),t)}{\tau}, \\[1mm]
X(\eta,0)=\eta,\quad u(\eta,0)=u_0(\eta).
\end{cases}
\end{align}
Along the path $X(\eta,t)$, it follows from $\eqref{mmaineq}_1$ that $m(x,t)$ keeps constant, then $m(X(\eta,t),t)=m_0(\eta):=\int_{-\infty}^{y}\rho_0(\xi)\, {\rm d}\xi$ and $\widetilde{m}(X(\eta,t),t)=\widetilde{m}_0(\eta):=\frac{1}{2}\big(m_0(\eta{+}){+}m_0(\eta{-}){-}M\big)$.
Substituting into \eqref{2.1c} and solving the ODE of $X(\eta,t)$, we have
\begin{align*}
X(\eta,t)=\eta+u_0(\eta)(\tau-\tau e^{-\frac{t}{\tau}})+ \widetilde{m}_0(\eta)(\tau^2-\tau^2 e^{-\frac{t}{\tau}}-\tau t).
\end{align*}

This motivates us to define the generalized potential $F(y;x,t)$ as follows:
For any fixed $(x,t)\in \mathbb{R}\times \mathbb{R}^+$, 
\begin{equation}\label{potentialF}
 F(y;x,t):=\int_{{-}\infty}^{y{-}}
 \eta +u_0(\eta)(\tau-\tau e^{-\frac{t}{\tau}})+ \widetilde{m}_0(\eta)(\tau^2-\tau^2 e^{-\frac{t}{\tau}}-\tau t)-x \,{\rm d}m_0(\eta),
\end{equation}
where $m_0(x)$ and $\widetilde{m}_0(x)$ are respectively given by
\begin{equation}\label{m0m0}
 m_0(x)=\int_{{-}\infty}^{x{-}}\rho_0(\eta) \,{\rm d}\eta,\qquad\,\,\, \widetilde{m}_0(x)=\frac{1}{2}\big(m_0(x{-})+m_0(x{+})-M\big).
\end{equation} 
By the nondecreasing property of $m_0(x)$ in $x$, if $[m_0(x_0)]>0$, then
\begin{equation}
\widetilde{m}_0(x_0{-})< \widetilde{m}_0(x_0)< \widetilde{m}_0(x_0{+}),
\end{equation}
which results in that there exists three different types of characteristics at some points with positive mass.

Denote $U_0=||u_0(x)||_{L^{\infty}_{\rho_0}(\mathbb{R})}$,
since $m_0(x)$ and $\widetilde{m}_0(x)$ are both nondecreasing 
and bounded by $[0,M]$ and $[-\frac{M}{2},\frac{M}{2}]$ respectively,
then for any fixed $(x,t)\in \mathbb{R}\times \mathbb{R}^+$,  
\begin{align*}
&F(y_1;x,t)-F(y_2;x,t)\\[1mm]
&=
\int_{y_2{-}}^{y_1{-}} \big(\eta +u_0(\eta)(\tau-\tau e^{-\frac{t}{\tau}})+ \widetilde{m}_0(\eta)(\tau^2-\tau^2 e^{-\frac{t}{\tau}}-\tau t)-x\big)\,{\rm d}m_0(\eta) \geq 0
\end{align*}
holds for any $y_1$ and $y_2$ satisfying 
\begin{align}
y_1 < y_2 \leq x-U_0 (\tau-\tau e^{-\frac{t}{\tau}})+\frac{M}{2} (\tau^2-\tau^2 e^{-\frac{t}{\tau}}-\tau t),\nonumber\\[1mm]
{\text or}\quad y_1 > y_2 \geq x+U_0 (\tau-\tau e^{-\frac{t}{\tau}})-\frac{M}{2} (\tau^2-\tau^2 e^{-\frac{t}{\tau}}-\tau t),\label{2.1b}
\end{align}
where $\tau^2-\tau^2 e^{-\frac{t}{\tau}}-\tau t\leq 0$ for $t\geq 0$.
This means that function $F(\cdot\,;x,t)$ has a finite low bound in $y\in \mathbb{R}$.
Thus, we let 
\begin{equation}\label{nuSeq}
\begin{cases}
\nu(x,t):=\mathop{\min}\limits_{y\in\mathbb{R}} F(y;x,t),\\[2mm]
S(x,t):=\{y\,:\, \exists\, y_n \rightarrow y\quad  {\rm s.t.}\ F(y_n;x,t) \rightarrow \nu(x,t)\};
\end{cases}
\end{equation}
and then define $y_*(x,t)$ and $y^*(x,t)$ by
\begin{align}\label{y*eq}
y_*(x,t):=\inf \big\{S(x,t) \cap {\rm spt}\{\rho_0\}  \big\},\qquad\,\,\,
y^*(x,t):=\lim_{\epsilon \rightarrow 0{+}} y_*(x{+}\epsilon,t).
\end{align}
As shown in Lemma $\ref{convergelem}$, $S(x,t)$ is closed so that
$$y_*(x,t),\, y^*(x,t) \in S(x,t) \cap {\rm spt}\{\rho_0\}.$$
Then, by using the backward characteristic areas from $(x,t)$ determined by $y_*(x,t)$ and $y^*(x,t)$, 
for any point $(x_0,t_0)$ with $t_0>0$, there exists a unique forward generalized characteristic $x(t)$ with $x(t_0)=x_0$.

\vspace{2pt}
{\it The formula for entropy solutions $(m,u)$, or equivalently $(m_x,u)$,} is defined by
\begin{eqnarray}
\!\!\!\!\!\!\!\!\!\!\!\!\!\!\!\!\!\!
&&m(x,t):=
\begin{cases}
\int_{{-}\infty}^{y_*(x,t){-}} \,{\rm d}m_0(\eta) 
&\quad\quad\quad\,\,\, {\rm if}\, \nu(x,t)\!=\!F(y_*(x,t),x,t),\\[2mm]
\int_{{-}\infty}^{y_*(x,t){+}} \,{\rm d}m_0(\eta)
&\quad\quad\quad\,\,\, {\rm otherwise};
\end{cases}\label{meq0}\\[2mm]
\!\!\!\!\!\!\!\!\!\!\!\!\!\!\!\!\!\!
&&\,\, u(x,t):=
\begin{cases}
U_0 e^{-\frac{t}{\tau}}{+}\widetilde{m}_0(y_*(x,t){+})(\tau e^{{-}\frac{t}{\tau}}{-}\tau)  
&{\rm if} \ c(y_*(x,t);x,t)> u_0(y_*(x,t)) \\ 
& {\rm and}\ c(y_*(x,t){+};x,t)> U_0, \\[1mm]
{-}U_0 e^{{-}\frac{t}{\tau}}{+}\widetilde{m}_0(y_*(x,t){-}) (\tau e^{{-}\frac{t}{\tau}}{-}\tau) 
&{\rm if} \ c(y_*(x,t);x,t)<u_0(y_*(x,t)) \\
& {\rm and}\ c(y_*(x,t){-};x,t)< {-}U_0,\\[1mm]
\bar{x}'(t)  &{\rm otherwise.}
\end{cases}\label{ueq0}
\end{eqnarray}
where $\bar{x}(\tau)$ for $\tau\geq t$ is the unique forward generalized characteristic 
emitting from point $(x,t)$ as shown in Lemma $\ref{velocitylem}$, 
and $c(y;x,t)$ and $c(y{\pm};x,t)$ are the corresponding initial speeds of the 
three types of characteristics connecting $(y,0)$ and $(x,t)$ and satisfy
\begin{align}\label{xytctm}
\begin{cases}
c(y;x,t)=\frac{x-y}{\tau-\tau e^{-\frac{t}{\tau}}}- \widetilde{m}_0(y)\big(\tau-\frac{t}{1-e^{-\frac{t}{\tau}}}\big),\\[1mm]
c(y{\pm};x,t)=\frac{x-y}{\tau-\tau e^{-\frac{t}{\tau}}}- \widetilde{m}_0(y{\pm})\big(\tau-\frac{t}{1-e^{-\frac{t}{\tau}}}\big).
\end{cases}
\end{align}
Moreover, we define the momentum $q(x,t)$ and energy $E(x,t)$ as follows:
\begin{align}\label{qeq}
q(x,t):=
\begin{cases}
\int_{{-}\infty}^{y_*(x,t){-}}u_0(\eta) e^{{-}\frac{t}{\tau}} {+} \widetilde{m}_0(\eta) (\tau e^{-\frac{t}{\tau}}{-}\tau)\,{\rm d}m_0(\eta)
& {\rm if}\,  \nu(x,t)\!=\!F(y_*(x,t),x,t),\\[2mm]
\int_{{-}\infty}^{y_*(x,t){+}}u_0(\eta) e^{{-}\frac{t}{\tau}} {+} \widetilde{m}_0(\eta) (\tau e^{-\frac{t}{\tau}}{-}\tau) \, {\rm d}m_0(\eta) 
& {\rm otherwise};
\end{cases}
\end{align}
\begin{align}\label{Eeq}
&E(x,t):=\nonumber\\[2mm]
&\begin{cases}
\int_{{-}\infty}^{y_*(x,t){-}}\big(u_0(\eta) e^{{-}\frac{t}{\tau}} {+} \widetilde{m}_0(\eta) (\tau e^{-\frac{t}{\tau}}{-}\tau)\big)u(x(\eta,t),t) \, {\rm d}m_0(\eta) 
& {\rm if}\,  \nu(x,t)\!=\!F(y_*(x,t),x,t),\\[2mm]
\int_{{-}\infty}^{y_*(x,t){+}}\big(u_0(\eta) e^{{-}\frac{t}{\tau}} {+} \widetilde{m}_0(\eta) (\tau e^{-\frac{t}{\tau}}{-}\tau)\big)u(x(\eta,t),t) \, {\rm d}m_0(\eta) 
& {\rm otherwise},
\end{cases}
\end{align}
where $x(\eta,t)$ for $t\in\mathbb{R}^+$ is one of the forward generalized characteristics emitting from point $(\eta,0)$ determined by $\eqref{xetaeq1}$--$\eqref{xetaeq2}$.
As shown in Lemma $\ref{RNlem}$, for almost everywhere $t>0$, 
$m(\cdot\,,t)$, $q(\cdot\,,t)$, and $E(\cdot\,,t)$ satisfy 
\begin{equation}\label{qumEum}
    q_x=um_x, \qquad\,\,\, E_x=u^2m_x 
\end{equation}
in the sense of Radon--Nikodym derivatives; 
and as shown in Lemma $\ref{mqnulem}$, 
\begin{equation}\label{numnuq}
    \nu_x(x,t)=-m(x,t),\quad \nu_t(x,t)=q(x,t)\qquad\,\,\, {\rm in}\ \mathcal{D}'.
\end{equation}

We now introduce the second and third generalized potentials 
to deal with the Poisson source term $-\widetilde{m}m_x$ and the relaxation term $-\frac{u m_x}{\tau}$
in the momentum equation.

{\it The second generalized potential} $G(y;x,t)$ is defined by
\begin{equation}\label{potentialG}
G(y;x,t)=\int_{{-}\infty}^{y{-}} \big(u_0(\eta) e^{-\frac{t}{\tau}}+\widetilde{m}_0(\eta)(\tau e^{-\frac{t}{\tau}}-\tau) +k\big)(x(\eta,t)-x) {\rm d}m_0(\eta),
\end{equation}
where $k$ is any constant satisfying $k>U_0{+}\frac{M}{2}
\tau$ and $x(\eta,t)$ is given by $\eqref{xetaeq1}$--$\eqref{xetaeq2}$.
Similar to the first generalized potential $F(y;x,t)$, 
we set $\mu(x,t):=\min_{y\in\mathbb{R}}G(y;x,t)$
and let $\omega:=\omega(x,t)$ be defined by
\begin{align}\label{wxt}
\omega(x,t)=
\begin{cases}
-\frac{1}{\tau}e^{-\frac{t}{\tau}} \int_{{-}\infty}^{y_*(x,t){-}}(u_0(\eta)+\tau \widetilde{m}_0(\eta))(x(\eta,t)-x) \,{\rm d}m_0(\eta) 
\ \ {\rm if} \, \mu(x,t)=G(y_*(x,t);x,t),\\[2mm]
-\frac{1}{\tau}e^{-\frac{t}{\tau}} \int_{{-}\infty}^{y_*(x,t){-}}(u_0(\eta)+\tau \widetilde{m}_0(\eta))(x(\eta,t)-x)\,{\rm d}m_0(\eta) \ \ {\rm otherwise};
\end{cases}
\end{align}
Then, similar to Lemma $\ref{mqnulem}$, it can be checked from $\eqref{qeq}$--$\eqref{Eeq}$ and $\eqref{wxt}$ that
\begin{equation}\label{thetaqE}
\theta_x(x,t)=-q(x,t),\quad \theta_t(x,t)=E(x,t)+\omega(x,t)\qquad\,\,\, {\rm in}\ \mathcal{D}',
\end{equation}
where $\theta=\theta(x,t)$ is defined by
\begin{align*}
&\theta(x,t)=\\
&\begin{cases}
\int_{{-}\infty}^{y_*(x,t){-}}\big(u_0(\eta) e^{-\frac{t}{\tau}}+ \widetilde{m}_0(\eta)(\tau e^{-\frac{t}{\tau}}-\tau)\big)(x(\eta,t){-}x) \,{\rm d}m_0(\eta)
\  {\rm if} \ \mu(x,t)=G(y_*(x,t);x,t),\\[2mm]
\int_{{-}\infty}^{y_*(x,t){+}}\big(u_0(\eta) e^{-\frac{t}{\tau}}+ \widetilde{m}_0(\eta)(\tau e^{-\frac{t}{\tau}}-\tau)\big)(x(\eta,t){-}x) \,{\rm d}m_0(\eta)\  {\rm otherwise}.
\end{cases}
\end{align*}

{\it The third generalized potential} $H(y;x,t)$ is defined by
\begin{equation}\label{potentialH}
H(y;x,t)=-\frac{1}{\tau}e^{-\frac{t}{\tau}} \int_{{-}\infty}^{y{-}}(u_0(\eta)+\tau \widetilde{m}_0(\eta)+k)(x(\eta,t)-x) \,{\rm d}m_0(\eta),
\end{equation}
where $k>U_0+\frac{M}{2}\tau$ is an arbitrary constant. 
By the same arguments as the second generalized potential $G(y;x,t)$ 
and using the BV chain rule ({\it cf.} \cite{vol1967spaces}), it can be checked that
\begin{equation}\label{omegammm}
\omega_{xx}(x,t)=\frac{1}{\tau}q_x(x,t)+\widetilde{m}(x,t) \,m_x(x,t).
\end{equation}

Finally, system $\eqref{mmaineq}$ can be formally checked from $\eqref{meq0}$--$\eqref{ueq0}$, $\eqref{qumEum}$--$\eqref{numnuq}$, 
$\eqref{thetaqE}$, and $\eqref{omegammm}$ as follows:
\begin{eqnarray*}
&&m_t+um_x=m_t+q_x=-\nu_{xt}+\nu_{tx}=0,\\[1mm]
&&(m_xu)_t+(m_xu^2)_x=-(\theta_x)_{xt}+(\theta_t-\omega)_{xx}=-\omega_{xx}=-\widetilde{m}m_x-\frac{u m_x}{\tau}.
\end{eqnarray*}

We now present the main theorems of Euler-Poisson system about the existence and uniqueness.

\begin{The}[Existence Theorem I]\label{ExisThm} 
Let $\rho_0\geq 0$ be a finite Radon measure and $u_0 \in L^{\infty}_{\rho_0}(\mathbb{R})$ as in $\eqref{ID}$. 
Let $(m,u)=(m(x,t),u(x,t))$ be defined by
\begin{eqnarray}
\!\!\!\!\!\!\!\!\!\!\!\!\!\!\!\!\!\!
&&m(x,t)=
\begin{cases}
\int_{{-}\infty}^{y_*(x,t){-}} \,{\rm d}m_0(\eta) 
&\quad\quad\quad\,\,\, {\rm if}\, \nu(x,t)\!=\!F(y_*(x,t),x,t),\\[2mm]
\int_{{-}\infty}^{y_*(x,t){+}} \,{\rm d}m_0(\eta)
&\quad\quad\quad\,\,\, {\rm otherwise};
\end{cases}\label{meq}\\[2mm]
\!\!\!\!\!\!\!\!\!\!\!\!\!\!\!\!\!\!
&&\,\, u(x,t)=
\begin{cases}
U_0 e^{-\frac{t}{\tau}}{+}\widetilde{m}_0(y_*(x,t){+})(\tau e^{-\frac{t}{\tau}}{-}\tau)  \quad 
&{\rm if} \ c(y_*(x,t);x,t)> u_0(y_*(x,t))\ \\ 
&\quad {\rm and}\ c(y_*(x,t){+};x,t)> U_0, \\[1mm]
{-}U_0 e^{-\frac{t}{\tau}}{+}\widetilde{m}_0(y_*(x,t){-})(\tau e^{-\frac{t}{\tau}}{-}\tau) \quad
&{\rm if} \ c(y_*(x,t);x,t)<u_0(y_*(x,t))\ \\
&\quad {\rm and}\ c(y_*(x,t){-};x,t)< {-}U_0,\\[1mm]
\bar{x}'(t)  \quad &\text{otherwise.} 
\end{cases}\label{ueq}
\end{eqnarray}
Then, $\eqref{meq}$--$\eqref{ueq}$ is the solution formula of system $\eqref{maineq}$
in the sense{\rm:} 
$(\rho,u)=(m_x,u)$ is an entropy solution of the Cauchy problem $\eqref{maineq}$--$\eqref{ID}$.

\vspace{2pt}
In the above, $U_0=||u_0(x)||_{L^{\infty}_{\rho_0}(\mathbb{R})}${\rm;} 
$m_0(x)$ and $\widetilde{m}_0(x)$ are given by $\eqref{m0m0}${\rm;}
$F(y;x,t)$, $\nu(x,t)$, and $y_*(x,t)$ are given by 
$\eqref{potentialF}$, $\eqref{nuSeq}$, and $\eqref{y*eq}$ respectively{\rm;}
$\bar{x}(\xi)$ for $\xi\geq t$ is 
the unique forward generalized characteristic emitting from point $(x,t)${\rm;} 
$c(y;x,t)$ and $c(y{\pm};x,t)$ are given by \eqref{xytctm}.
\end{The}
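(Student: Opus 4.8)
The plan is to verify directly that the explicitly constructed pair $(m,u)$ of \eqref{meq}--\eqref{ueq} meets every requirement of Definition \ref{weakdef} and Definition \ref{entropylem}, treating the auxiliary quantities $\nu$, $q$, $E$, $\theta$, $\omega$ and the generalized potentials $F$, $G$, $H$ as the working tools. The whole construction is engineered so that the formal chain displayed just before the statement becomes rigorous once four packages of derivative identities are in place: the envelope relations \eqref{numnuq} for $\nu$, the Radon--Nikodym relations \eqref{qumEum} for $q$ and $E$, the relations \eqref{thetaqE} for $\theta$, and the second-order identity \eqref{omegammm} for $\omega$. The pleasant point is that, granting these, the two weak equations in \eqref{weakeq} reduce to the symmetry of mixed distributional derivatives.

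First I would settle the structural and regularity properties of the minimizer. By the coercivity estimate \eqref{2.1b} the function $F(\cdot\,;x,t)$ attains a finite minimum $\nu(x,t)$, and by Lemma \ref{convergelem} the set $S(x,t)$ is closed, so $y_*(x,t)$ and $y^*(x,t)$ are well defined and lie in $S(x,t)\cap\operatorname{spt}\{\rho_0\}$. Since $F(y;\cdot\,,t)$ is affine in $x$ with slope $-m_0(y{-})\in[-M,0]$, the value function $\nu(\cdot\,,t)$ is concave, hence Lipschitz, in $x$; this both makes $m(x,t)\in[0,M]$ nondecreasing in $x$ and legitimizes the distributional differentiation used later. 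From \eqref{ueq} the velocity is bounded by $U_0+\tfrac{M}{2}\tau$ and is $m_x$-measurable, and the weak continuity in $t$ of the measures $m_x$ and $u\,m_x=q_x$ follows from the continuity of $\nu(\cdot\,,t)$ and $q(\cdot\,,t)$ in $t$; this closes the first clause of Definition \ref{weakdef}.

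Next comes the analytic core, namely the four derivative packages. The identities $\nu_x=-m$, $\nu_t=q$ are proved by differentiating the value function $\nu=\min_y F$: since $F$ is affine in $x$ and its $t$-derivative produces precisely the integrand of \eqref{qeq}, an envelope argument evaluated at $y_*$ yields \eqref{numnuq} in $\mathcal{D}'$ (Lemma \ref{mqnulem}). The relations $q_x=u\,m_x$ and $E_x=u^2m_x$ are obtained by comparing the integrands of \eqref{qeq}--\eqref{Eeq} against that of \eqref{meq} and identifying $u(x,t)$ as the $m_x$-almost-everywhere value of the velocity integrand (Lemma \ref{RNlem}). The relations \eqref{thetaqE} and \eqref{omegammm} follow from the same envelope and BV chain-rule reasoning applied to $G$ and $H$. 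With all four packages available, one multiplies by test functions, integrates by parts, and uses $m=-\nu_x$, $u\,m_x=q_x=\nu_{tx}$, $m_xu^2=E_x=(\theta_t-\omega)_x$ together with the commutation $\nu_{xt}=\nu_{tx}$, $\theta_{xxt}=\theta_{txx}$ to recover $m_t+um_x=-\nu_{xt}+\nu_{tx}=0$ and $(m_xu)_t+(m_xu^2)_x=-\omega_{xx}=-\widetilde m\,m_x-\tfrac{u m_x}{\tau}$, which is exactly \eqref{weakeq}.

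Finally I would verify the initial traces and the entropy conditions. As $t\to0{+}$ the minimizer $y_*(x,t)$ collapses to the value selected by $m_0$, so $m(x,t)\to m_0(x)$ and $q(x,t)\to\int_{-\infty}^{x{-}}u_0\,{\rm d}m_0$, giving the weak convergences $m_x\rightharpoonup\rho_0$ and $u\,m_x\rightharpoonup\rho_0u_0$. The Oleinik inequality \eqref{ux2ux1} is read off from \eqref{xytctm}: substituting the required initial speed into the velocity formula gives an $x$-slope $\tfrac{e^{-t/\tau}}{\tau-\tau e^{-t/\tau}}$, and the elementary inequality $(t+\tau)e^{-t/\tau}\le\tau$ bounds this by $1/t$, while the monotone $\widetilde m_0$-contribution carries the favorable sign, so $(u(x_2,t)-u(x_1,t))/(x_2-x_1)\le 1/t$. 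The remaining condition, the weak convergence $\rho u^2\rightharpoonup\rho_0u_0^2$, is \textbf{the main obstacle}: the energy $E$ is assembled by following particle trajectories through inelastic collisions, so one must rule out any loss of energy at $t=0$ even where characteristics are already merging into a $\delta$-shock. Here the sign $\kappa<0$ is decisive---the attractive force keeps the velocity spread $\widetilde m_0(y)(\tau e^{-t/\tau}-\tau)=O(t)$ uniformly small near $t=0$, which secures the initial weak continuity of the kinetic energy measure and completes the verification.
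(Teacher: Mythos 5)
Your roadmap does coincide with the paper's own architecture: the four ``derivative packages'' $\nu_x=-m,\ \nu_t=q$, $q_x=um_x,\ E_x=u^2m_x$, $\theta_x=-q,\ \theta_t=E+\omega$, and $\omega_{xx}=\frac1\tau q_x+\widetilde m\,m_x$ are exactly Lemmas \ref{mqnulem}, \ref{RNlem} and the computations of \S 3.3--3.4, your concavity-of-$\min$ observation gives the Lipschitz continuity of $\nu$ proved in \eqref{nuxeqneq}, and your Oleinik inequality $(t+\tau)e^{-t/\tau}\le\tau$ is precisely the elementary bound behind Step 3 of Lemma \ref{lem:uuu} (modulo the one-sided monotonicity \eqref{uuupm}, which you omit but which is needed to pass from one-sided limits to values). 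However, there is a genuine gap at the step you yourself call the main obstacle: the weak continuity of $\rho u^2$ at $t=0$. Your argument --- that attractiveness keeps the force contribution $\widetilde m_0(y)(\tau e^{-t/\tau}-\tau)=O(t)$ small --- is true but vacuous: the same $O(t)$ bound holds for $\kappa>0$, and smallness of the force plays no role in the danger, which is the instantaneous merging of trajectories ($\delta$-shocks forming at $t=0{+}$ at atoms of $\rho_0$ or decreasing jumps of $u_0$), where one must rule out a loss of kinetic energy in the limit. The paper's actual mechanism is entirely different: it reduces to showing $u(x(\eta,t),t)\to u_0(\eta)$ $\rho_0$-a.e., works at $\rho_0$-Lebesgue points \eqref{u0etam}, represents $u(x(\eta,t),t)$ as the Radon--Nikodym quotient sandwiched between the characteristics through $\eta\mp\epsilon$ as in \eqref{qqmmu}, and interchanges $\lim_{t\to0{+}}$ with $\lim_{\epsilon\to0}$ using the \emph{already proved} weak continuity of $m_x$ and $um_x$ --- see \eqref{3.5a} --- plus a separate concentration case $x(\eta{-},t)<x(\eta{+},t)$. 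None of this appears in your plan, and the sign $\kappa<0$ actually enters much earlier, in \eqref{FFyy0}, where the nondecreasing $\widetilde m_0$ pairs with $f(t)>0$ only because $\widetilde m_0$ carries the attractive coefficient in $F$; for $\kappa>0$ the minimizer monotonicity \eqref{y*increas} would fail and the whole variational construction collapses.

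A secondary but related under-specification: you dispose of $q_x=um_x$ and $E_x=u^2m_x$ by ``comparing integrands,'' but the integrand of $E$ in \eqref{Eeq1} contains $u(x(\eta,t),t)$ composed with the forward generalized characteristic $x(\eta,t)$ of \eqref{xetaeq1}--\eqref{xetaeq2}. Lemma \ref{RNlem} is not a pointwise identification: it requires knowing that all the mass between $y_*(x_0,t_0)$ and $y^*(x_0,t_0)$ sits at the single point $x_0$ at time $t_0$ (equations \eqref{xetaeq}--\eqref{xeq2}), and, before any of that, the existence and uniqueness of the forward generalized characteristic together with its case-by-case derivative --- Lemma \ref{velocitylem}, which occupies the largest part of the paper's proof and is also what makes the ``otherwise'' branch $\bar{x}'(t)$ in \eqref{ueq} well defined at all. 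Your proposal assumes this machinery rather than constructing it, so as written it is a faithful outline of the paper's strategy with the two hardest verifications --- the forward-characteristic lemma and the initial kinetic-energy trace --- left unproved, and with the latter justified by a heuristic that would not survive scrutiny.
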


\begin{The}[Uniqueness Theorem]\label{UniThm} 
Let $\rho_0\geq 0$ be a finite Radon measure and $u_0 \in L^{\infty}_{\rho_0}(\mathbb{R})$ as in $\eqref{ID}$.
Assume that $(m_1,u_1)$ and $(m_2,u_2)$ are two entropy solutions of system $\eqref{maineq}$ 
with the same initial data $(\rho_0,u_0)$. 
Then, 
\begin{equation}
m_1\mathop{=}\limits^{a.e.}m_2\,;\qquad\,\,\,
u_1\mathop{=}\limits^{a.e.}u_2\quad {\it w.r.t}\ \ {\rm the\ measure}\ m_{1x}=m_{2x}.
\end{equation}
\end{The}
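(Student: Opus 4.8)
The strategy is to reduce Theorem \ref{UniThm} to the fact that the solution formula of Theorem \ref{ExisThm} depends only on the initial data $(\rho_0,u_0)$, through $m_0$, $\widetilde{m}_0$, and the generalized potential $F$ of \eqref{potentialF}. It therefore suffices to prove that \emph{every} entropy solution $(m,u)$ of \eqref{maineq}--\eqref{ID} necessarily satisfies the formulas \eqref{meq}--\eqref{ueq}; applying this to the two given solutions $(m_1,u_1)$ and $(m_2,u_2)$ then forces them to coincide. The work is thus to recover, starting only from the weak formulation \eqref{weakeq} and the entropy conditions of Definition \ref{entropylem}, the Lagrangian structure used in the construction.

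First I would fix a point $(x,t)$ with $t>0$ and use the Oleinik one-sided Lipschitz estimate \eqref{ux2ux1} to set up backward generalized characteristics for the given entropy solution. Condition (i) forces $x\mapsto u(x,t)$ to admit only downward jumps and renders the associated displacement map monotone, so that backward characteristics from $(x,t)$ are well defined and consistent with the minimizing structure of $F(\cdot\,;x,t)$. Invoking the transport equation $m_t+um_x=0$ in the weak sense of \eqref{weakeq}, I would then show that $m$ is constant along these characteristics, which freezes the Poisson coefficient to its initial value $\widetilde{m}_0(y)$ along each characteristic, exactly as in the smooth analysis preceding \eqref{2.1c}. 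Substituting this into the characteristic ODE \eqref{2.1c} reproduces the explicit displacement appearing inside \eqref{potentialF}, so that the admissible backward characteristics from $(x,t)$ are precisely the candidate minimizers of $F$.

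The crucial step is to match the initial layer and so pin down the mass. Here I would invoke all three weak-continuity conditions: $m_x\rightharpoonup\rho_0$ and $um_x\rightharpoonup\rho_0u_0$ from Definition \ref{weakdef}, together with the entropy energy condition $\rho u^2\rightharpoonup\rho_0u_0^2$ from Definition \ref{entropylem}(ii). The mass and momentum convergences fix the initial position and velocity transported along each backward characteristic, while the energy convergence is the decisive extra ingredient: it forbids any concentration or dissipation of kinetic energy in the initial layer and hence, unlike mass and momentum alone, rules out spurious solutions sharing the same initial mass and momentum. Combined with the Oleinik inequality, which confines the backward characteristics to the minimizing set $S(x,t)$ of \eqref{nuSeq}, and with the boundedness of the energy measure guaranteed by the attractive sign of the Poisson force, these conditions force $m(x,t)$ to equal $m_0(y_*(x,t)\pm)$, that is, the formula \eqref{meq}. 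In particular $m_1\stackrel{a.e.}{=}m_2$.

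Finally, with the mass uniquely determined, I would recover the velocity. Since $q_x=um_x$ holds in the Radon--Nikodym sense \eqref{qumEum}, and the momentum $q(x,t)$ is itself expressed through the same minimizer $y_*(x,t)$ via \eqref{qeq}, the velocity $u$ is determined $m_x$-almost everywhere as the Radon--Nikodym derivative of $q$ with respect to $m$; this yields \eqref{ueq} and hence $u_1\stackrel{a.e.}{=}u_2$ with respect to the common measure $m_{1x}=m_{2x}$. I expect the main obstacle to be the third step: rigorously transferring the pointwise characteristic-selection argument into the singular Radon-measure setting, and proving that the energy weak-continuity condition indeed excludes alternative initial-layer behaviour rather than merely the one captured by mass and momentum. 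This requires careful control of the initial trace of $\rho u^2$ along sequences of characteristics, and is precisely where the attractive sign of $\kappa$ enters decisively to keep the energy measure bounded.
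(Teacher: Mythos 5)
Your overall strategy coincides with the paper's: reduce uniqueness to showing that \emph{every} entropy solution must agree with the solution constructed in Theorem \ref{ExisThm}, recover the Lagrangian structure from the Oleinik condition and the weak formulation, and use the initial weak continuity of mass, momentum and, decisively, kinetic energy to pin down the initial layer; your reading of the energy condition matches the paper's Lemma \ref{xicon}, where the identity $\int(c-u_0(0))^2\,{\rm d}m(c)=0$ in \eqref{cu0eq} is what identifies the splitting velocities, and your final recovery of $u$ as the Radon--Nikodym derivative $q_x=um_x$ is also the paper's.

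However, two steps in your plan have genuine gaps. First, you set up \emph{backward} characteristics directly from $(x,t)$ down to $t=0$ and claim the Oleinik condition makes them ``well defined.'' For a measure-valued entropy solution this is not available: $u$ is merely bounded and measurable with respect to $m_x$, $u_0$ has no pointwise meaning off ${\rm spt}\{\rho_0\}$, the one-sided Lipschitz bound $1/t$ degenerates as $t\to 0{+}$, and backward uniqueness fails at $\delta$-shocks. The paper instead argues in two stages: it mollifies $u$, builds \emph{forward} flows $X_{t_1}(\xi,t)$ from an intermediate time $t_1>0$ (Lemma \ref{reflem}, after Wang--Ding), proves the representation formulas \eqref{umeq} and \eqref{uqeq} at time $t_1$ by invoking the uniqueness of the linear transport equation (Proposition \ref{scalarthm}, after Huang--Wang), and only then lets $t_1\to 0{+}$, which is exactly where the three weak-continuity conditions enter. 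Second, you ``substitute into the characteristic ODE \eqref{2.1c}'' to reproduce the displacement appearing in \eqref{potentialF}; but \eqref{2.1c} is a statement about smooth solutions and cannot be invoked for an arbitrary entropy solution. The paper derives the corresponding identity \eqref{Xt1u} from the weak momentum equation itself, by testing with functions supported between two characteristics and solving an ODE for the integrated position $h(s)$ (Lemma \ref{uRNlem}); this identity is what ultimately shows, via the potential $F_{t_1}$ and \eqref{minFt1}, that the traced-back point $\xi(x,t)$ lies in the minimizing set $S(x,t)$, so that Lemma \ref{sigmasol} applies. Without these two mechanisms your outline asserts, rather than proves, the identification of the characteristics of an arbitrary entropy solution with the minimizers of $F$ --- which, as you yourself anticipate, is precisely where the difficulty lies.
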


\subsection{Formula of solutions to drift equations}

\smallskip

To the best of our knowledge, there is no result about the model \eqref{2.2a}. In order to prove the relaxation limit from \eqref{maineq} to \eqref{2.2a}, we need to construct the formula of its solutions.
Without the diffusion term, the drift equations exhibit behavior similar to pressureless Euler-Poisson system and we can solve it by similar method.

Similar to \eqref{mPhix} and \eqref{widem}, we define the mass and the potential as follows
\begin{align}
&\bar{m}(x,t):=\int_{-\infty}^{x-} \bar{\rho}(\eta,t) \, {\rm d}\eta,\qquad M=\int_{\mathbb{R}} \bar{\rho}(\eta,t)\, {\rm d}\eta,  \\[1mm]
&\bar{\Phi}_x(x,t):=\frac{1}{2}\big(\int_{-\infty}^{x-}{-}\int_{x+}^{\infty}\big) \bar{\rho}(\eta,t)\,{\rm d}\eta=\frac{1}{2}\big(\bar{m}(x{-},t){+}\bar{m}(x{+},t){-}M\big)=:\widehat{m}(x,t).\label{2.6}
\end{align}
Since $\partial_x \bar{m}(x,t)=\partial_x \widehat{m}(x,t)=\bar{\rho}$ in the sense of BV derivative, by integrating in $(-\infty,x)$, we can transform \eqref{2.2a} into
\begin{align}\label{2.7}
\begin{cases}
\bar{m}_t-\widehat{m}\,\bar{m}_x=0,\\[1mm]
\bar{m}(x,0)=m_0(x),
\end{cases}
\end{align}
where $m_0(x)$ is given by \eqref{m0m0}.
We focus on system \eqref{2.7} and give the definition of weak solutions as follows.

\begin{Def}[Weak Solution II]
Let $\bar{m}(x,t)
\in[0,M]$ be nondecreasing in $x$ and the measure $\bar{m}_x$ be weakly continuous in $t$, 
then $\bar{\rho}=\bar{m}_x$ {\rm(}resp. $m${\rm)} is called to be a weak solution of \eqref{2.2a} {\rm(}resp. \eqref{2.7}{\rm)}, if for any $\varphi \in C_c^{\infty}(\mathbb{R}\times\mathbb{R}^+)$,
\begin{align}
\iint \varphi_t \bar{m} \,{\rm d}x{\rm d}t+\iint \varphi \widehat{m} \,{\rm d}\bar{m}{\rm d}t=0;
\end{align}
and as $t\rightarrow 0+$, the measure $\bar{\rho}=\bar{m}_x$ weakly converges to $\rho_0$. 
\end{Def}

Now we construct the weak solutions of \eqref{2.7}. First,
it follows from \eqref{m0m0}, \eqref{2.6}, and \eqref{2.7} that 
\begin{align}
\widehat{m}_0(x):=\widehat{m}(x,0)=\widetilde{m}_0(x),\nonumber
\end{align}
so that the potential $\bar{F}(y;x,t)$ can be defined as
\begin{align}\label{2.9}
\bar{F}(y;x,t)=\int_{-\infty}^{y-} \eta -t\widetilde{m}_0(\eta)-x\,{\rm d}m_0(\eta).
\end{align}
For any fixed $(x,t)\in \mathbb{R}\times \mathbb{R}^+$, 
\begin{align}
\bar{F}(y_1;x,t)-\bar{F}(y_2;x,t)=\int_{y_2-}^{y_1-} \eta-t \widetilde{m}_0(\eta)-x \,{\rm d}m_0(\eta) \geq 0, \nonumber
\end{align}
holds for any $y_1$ and $y_2$ satisfying
\begin{align}\label{2.9a}
y_1>y_2>x+\frac{t M}{2}, \qquad {\text or}\quad y_1<y_2<x-\frac{t M}{2}, 
\end{align}
then $\bar{F}(\cdot;x,t)$ has a finite low bound in $y\in \mathbb{R}$.
Similar to \eqref{nuSeq}, we define
\begin{align}\label{2.10}
\begin{cases}
\bar{\nu}(x,t):=\min_{y\in \mathbb{R}}\bar{F}(y;x,t),\\[1mm]
\bar{S}(x,t):=\{y\,:\, \exists\, y_n \rightarrow y\quad  {\rm s.t.}\ \bar{F}(y_n;x,t) \rightarrow \bar{\nu}(x,t)\};
\end{cases}
\end{align}
and define $\bar{y}_*(x,t)$ and $\bar{y}^*(x,t)$ by 
\begin{align}\label{2.11}
\bar{y}_*(x,t):=\inf \big\{\bar{S}(x,t) \cap {\rm spt}\{\rho_0\}  \big\},\qquad\,\,\,
\bar{y}^*(x,t):=\lim_{\epsilon \rightarrow 0{+}} \bar{y}_*(x{+}\epsilon,t).
\end{align}
Similar to the proof of \eqref{geoentropy}, for any $x_1<x_2$ with $t>0$, we have
\begin{align}\label{2.12}
\bar{y}_*(x_1,t)\leq \bar{y}^*(x_1,t)\leq \bar{y}_*(x_2,t).
\end{align}

The weak solution $\bar{m}(x,t)$ can be constructed as follows:
\begin{align}\label{2.12m}
\bar{m}(x,t):=
\begin{cases}
\int_{{-}\infty}^{\bar{y}_*(x,t){-}} \,{\rm d}m_0(\eta) \quad
& {\rm if}\, \bar{\nu}(x,t)=\bar{F}(y_*(x,t),x,t),\\[1mm]
\int_{{-}\infty}^{\bar{y}_*(x,t){+}} \,{\rm d}m_0(\eta) \quad
& {\rm otherwise}.
\end{cases}
\end{align}
Besides, we also define 
\begin{align}\label{2.12q}
\bar{q}(x,t):=
\begin{cases}
\int_{{-}\infty}^{\bar{y}_*(x,t){-}}-\widetilde{m}_0(\eta) \,{\rm d}m_0(\eta) \quad
& {\rm if}\, \bar{\nu}(x,t)=\bar{F}(y_*(x,t),x,t),\\[1mm]
\int_{{-}\infty}^{\bar{y}_*(x,t){+}} -\widetilde{m}_0(\eta) \,{\rm d}m_0(\eta) \quad
& {\rm otherwise}.
\end{cases}
\end{align}
It follows from \eqref{4.8} that $\partial_x \bar{q}={-}\widehat{m} \bar{m}_x$ in the sense of BV derivative.
From Lemma \ref{4.3}, we can prove  $\bar{\nu}_x(x,t)=-\bar{m}(x,t),\,\, \bar{\nu}_t(x,t)=\bar{q}(x,t)$ in the sense of distribution,
so that \eqref{2.7} can be checked formally as follows:
\begin{align*}
    \bar{m}_t-\widehat{m}\bar{m}_x=-\bar{\nu}_{xt}+\bar{\nu}_{xt}=0.
\end{align*}
Thus we have the following existence theorem of drift equations.

\begin{The}[Existence Theorem II]\label{Ex2}
Let $\rho_0\geq 0$ be a finite Radon measure and $\bar{m}=\bar{m}(x,t)$ be defined by
\begin{align}\label{2.13}
\bar{m}(x,t):=
\begin{cases}
\int_{{-}\infty}^{\bar{y}_*(x,t){-}} \,{\rm d}m_0(\eta) \quad
& {\rm if}\, \bar{\nu}(x,t)=\bar{F}(y_*(x,t),x,t),\\[1mm]
\int_{{-}\infty}^{\bar{y}_*(x,t){+}} \,{\rm d}m_0(\eta) \quad
& {\rm otherwise}.
\end{cases}
\end{align}
Then \eqref{2.13} is the solution formula of \eqref{2.2a} in the sense: $\bar{\rho}=\bar{m}_x$ is a weak solution of \eqref{2.2a}.

In the above, $\bar{y}_*(x,t)$, $\bar{\nu}(x,t)$, and $\bar{F}$ are given by \eqref{2.11}, \eqref{2.10} and \eqref{2.9}, respectively.
\end{The}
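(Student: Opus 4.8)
The plan is to verify directly the three requirements in the definition of Weak Solution II for the candidate $\bar m$ in \eqref{2.13}, following the scheme already set up for the Euler--Poisson system but exploiting that the drift equation \eqref{2.7} carries the explicit straight-line characteristics $\eta\mapsto\eta-t\widetilde m_0(\eta)$, whose generalized potential is exactly $\bar F$ in \eqref{2.9}. First I would record the structural facts: $\bar F(\cdot\,;x,t)$ is lower semicontinuous and coercive (by the estimate following \eqref{2.9a}), so that $\bar\nu(x,t)=\min_y\bar F(y;x,t)$ is attained and $\bar S(x,t)$ is nonempty and closed; consequently $\bar y_*(x,t),\bar y^*(x,t)\in\bar S(x,t)\cap\mathrm{spt}\{\rho_0\}$, as in the analogue of Lemma~\ref{convergelem}. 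This makes $\bar m(x,t)=m_0(\bar y_*(x,t)\mp)\in[0,M]$ well defined.

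Next I would establish the monotonicity \eqref{2.12}. The key observation is that $\bar F$ depends affinely on $x$, so that for $x_1<x_2$,
\begin{align*}
\bar F(y;x_2,t)-\bar F(y;x_1,t)=(x_1-x_2)\,m_0(y-),
\end{align*}
which is nonincreasing in $y$ since $m_0$ is nondecreasing and $x_1-x_2<0$. A standard comparison of minimizers then forces $\bar y^*(x_1,t)\le\bar y_*(x_2,t)$, i.e. the backward characteristics do not cross, which is precisely \eqref{2.12} (the analogue of \eqref{geoentropy}); in particular $\bar m(\cdot,t)$ is nondecreasing in $x$, so $\bar\rho=\bar m_x\ge0$ is a well-defined finite measure.

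The heart of the argument is the pair of distributional identities $\bar\nu_x=-\bar m$ and $\bar\nu_t=\bar q$ in $\mathcal D'$ (the analogue of Lemma~\ref{mqnulem}, supplied here by Lemma~\ref{4.3}). Since $\bar F(y;\cdot\,,\cdot)$ is jointly affine in $(x,t)$ with $x$-slope $-m_0(y-)$ and $t$-slope $-\int_{-\infty}^{y-}\widetilde m_0\,\mathrm dm_0$, the minimum $\bar\nu$ is jointly concave, hence locally Lipschitz; by an envelope (Danskin-type) argument its partials at every differentiability point equal the partials of $\bar F$ at a minimizer, namely $-m_0(\bar y_*\mp)=-\bar m$ and $\int_{-\infty}^{\bar y_*\mp}-\widetilde m_0\,\mathrm dm_0=\bar q$ from \eqref{2.12q}, and since $\bar y_*$ jumps on an at most countable set these upgrade to $\mathcal D'$. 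Coupling this with the BV-chain-rule identity $\bar q_x=-\widehat m\,\bar m_x$ from \eqref{4.8}, and testing against $\varphi\in C_c^\infty(\mathbb R\times\mathbb R^+)$, I substitute $\bar m=-\bar\nu_x$ and $\widehat m\,\mathrm d\bar m=\widehat m\,\bar m_x\,\mathrm dx=-\bar q_x\,\mathrm dx$, then integrate by parts once in $x$ to obtain
\begin{align*}
\iint\varphi_t\bar m\,\mathrm dx\,\mathrm dt+\iint\varphi\,\widehat m\,\mathrm d\bar m\,\mathrm dt
=-\iint\varphi_t\,\bar\nu_x\,\mathrm dx\,\mathrm dt+\iint\varphi_x\,\bar\nu_t\,\mathrm dx\,\mathrm dt.
\end{align*}
A second integration by parts together with the symmetry of the mixed distributional derivatives of the locally Lipschitz $\bar\nu$ shows the right-hand side vanishes, which is the required weak identity.

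Finally I would treat the initial layer: as $t\to0+$, $\bar F(y;x,t)\to\int_{-\infty}^{y-}(\eta-x)\,\mathrm dm_0$, whose minimizer recovers $\bar m(x,0)=m_0(x)$, and the uniform-in-$t$ control of $\bar y_*$ on compact sets yields weak continuity of $\bar m_x$ in $t$ together with $\bar m_x\rightharpoonup\rho_0$ as $t\to0+$. The main obstacle I expect is precisely the passage from the pointwise envelope identities to the distributional ones at the $\delta$-shock locations, where $\bar y_*$ is discontinuous: there one must check that the one-sided selection encoded in the two cases of \eqref{2.13} and \eqref{2.12q} is exactly the choice making $-\bar\nu_x$ coincide with $\bar m$ \emph{as measures} (not merely pointwise almost everywhere), since it is this matching that transmits the concentrated mass correctly into the weak formulation.
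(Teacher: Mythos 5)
Your proposal is correct and takes essentially the same route as the paper: the paper's Lemma \ref{4.3} establishes precisely your envelope identities $\bar\nu_x=-\bar m$ and $\bar\nu_t=\bar q$ (proved by two-sided comparison of $\bar F$ at the two minimizing selections rather than by concavity/Danskin, a cosmetic difference), and then combines them with the BV chain-rule identity \eqref{4.8} and the same integration-by-parts computation, with the initial trace handled via the bound \eqref{2.9a} on $\bar y_*$ plus a separate (easy) argument for $x\notin{\rm spt}\{\rho_0\}$. Your closing concern is benign: since $\bar\nu$ is locally Lipschitz, identifying $-\bar\nu_x$ with $\bar m$ almost everywhere already determines the measure $\bar m_x$ (functions equal a.e.\ have equal distributional derivatives), so no additional matching at the jump points of $\bar y_*$ is required.
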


\subsection{The relaxation limit of solutions}
By slow time scaling, the unique entropy solution \eqref{meq0}--\eqref{ueq0} of pressureless Euler-Poisson system transforms into
\begin{align}\label{2.13a}
m^{\tau}(x,t):=m(x,\frac{t}{\tau}), \quad
\rho^{\tau}(x,t):=\rho(x,\frac{t}{\tau})=m^{\tau}_x,\quad
u^{\tau}(x,t):=\frac{1}{\tau}u(x,\frac{t}{\tau}). 
\end{align}

For drift equations \eqref{2.2}, if we consider the smooth solutions, then $\bar{u}=-\bar{\Phi}_x$.
Due to this motivation, for weak solutions,  we define the velocity $\bar{u}(x,t)$ of drift equations as
\begin{align}\label{2.13b}
\bar{u}(x,t)=-\widehat{m}(x,t),
\end{align}
where $\widehat{m}(x,t)$ is given by \eqref{2.6}.
Then we have the following theorem about the relaxation limit.

\begin{The}[Relaxation Limit]\label{thm3}
Let $(\rho^{\tau}(x,t),u^{\tau}(x,t))$ be the sequence of the unique entropy solution to \eqref{maineq} with the initial data $(\rho_0,u_0)$ as in \eqref{ID},  $\bar{\rho}(x,t)$ and $\bar{u}(x,t)$ be the weak solution and velocity to \eqref{2.2a} respectively with the same initial data.
Then
\begin{align}
\rho^{\tau}(x,t)\rightharpoonup  \bar{\rho}(x,t) \ {\rm as}\ \tau \rightarrow 0;\qquad
\lim_{\tau\rightarrow 0{+}}u^{\tau}(x,t)\mathop{=}\limits^{a.e.} \bar{u}(x,t) \ w.r.t. \ measure\ \bar{m}_x.
\end{align}

In the above, $\rho^{\tau}$ and $u^{\tau}$ are given in \eqref{2.13a}{\rm;} $\bar{\rho}$ and $\bar{u}$ are given by \eqref{2.13} and \eqref{2.13b}, respectively.
\end{The}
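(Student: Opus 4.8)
The plan is to reduce the whole statement to the convergence of the generalized potentials under the slow time scaling, and then to read off the convergence of mass, density, momentum and velocity from the explicit formulas. Writing $\nu^{\tau}(x,t):=\nu(x,t/\tau)$ and substituting $t\mapsto t/\tau$ in \eqref{potentialF}, the coefficient of $u_0(\eta)$ becomes $\tau-\tau e^{-t/\tau^2}$ and that of $\widetilde{m}_0(\eta)$ becomes $\tau^2-\tau^2 e^{-t/\tau^2}-t$. For fixed $t>0$ one has the elementary limits $\tau-\tau e^{-t/\tau^2}\to 0$ and $\tau^2-\tau^2 e^{-t/\tau^2}-t\to -t$ as $\tau\to 0{+}$, since $e^{-t/\tau^2}$ decays faster than any power of $\tau$. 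Because $|u_0|\le U_0$, $|\widetilde{m}_0|\le M/2$ and $m_0$ carries finite total mass $M$, dominated convergence for the Lebesgue--Stieltjes integral yields $F(y;x,t/\tau)\to\bar{F}(y;x,t)$, locally uniformly in $y$, where $\bar{F}$ is the drift potential \eqref{2.9}. First I would record this as a lemma, together with the uniform-in-$\tau$ coercivity coming from \eqref{2.1b}, which guarantees that all minima are attained on a common compact $y$-set for small $\tau$.

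For the density, the key observation is that the minimum value is stable under locally uniform convergence of coercive functions, so $\nu^{\tau}(x,t)=\min_y F(y;x,t/\tau)\to\min_y\bar{F}(y;x,t)=\bar{\nu}(x,t)$ pointwise in $(x,t)$. Since the $\nu^{\tau}$ are uniformly bounded, this pointwise limit is also a limit in $\mathcal{D}'$, and differentiating, together with \eqref{numnuq} and its drift analogue $\bar{\nu}_x=-\bar{m}$, $\bar{\nu}_t=\bar{q}$, gives $m^{\tau}=-\nu^{\tau}_x\to-\bar{\nu}_x=\bar{m}$ and $\rho^{\tau}=-\nu^{\tau}_{xx}\to\bar{\rho}$ in $\mathcal{D}'$. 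Because the $\rho^{\tau}$ are nonnegative measures of common total mass $M$, convergence against $C_c^{\infty}$ upgrades to weak convergence of measures, i.e. $\rho^{\tau}\rightharpoonup\bar{\rho}$. This route via $\nu^{\tau}\to\bar{\nu}$ is cleaner than tracking minimizers directly and sidesteps the nonuniqueness of the minimizing set at shock locations.

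For the velocity I would argue through the momentum. Under the scaling \eqref{2.13a} the cumulative momentum becomes $q^{\tau}(x,t)=\frac{1}{\tau}q(x,t/\tau)$, whose integrand in \eqref{qeq} is $\frac{u_0(\eta)}{\tau}e^{-t/\tau^2}+\widetilde{m}_0(\eta)(e^{-t/\tau^2}-1)$; using $\frac{1}{\tau}e^{-t/\tau^2}\to 0$ and $e^{-t/\tau^2}-1\to-1$, dominated convergence sends this to the drift integrand $-\widetilde{m}_0(\eta)$, so $q^{\tau}\to\bar{q}$ exactly as in \eqref{2.12q}. Equivalently $q^{\tau}=\nu^{\tau}_t\to\bar{\nu}_t=\bar{q}$, whence $\rho^{\tau}u^{\tau}=q^{\tau}_x\rightharpoonup\bar{q}_x=-\widehat{m}\,\bar{m}_x=\bar{\rho}\,\bar{u}$. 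To pass from this weak convergence of momenta to the asserted a.e. convergence of $u^{\tau}$ with respect to $\bar{m}_x$, I would invoke the Radon--Nikodym identities \eqref{qumEum}, writing $u^{\tau}$ and $\bar{u}=-\widehat{m}$ as the derivatives $dq^{\tau}/dm^{\tau}$ and $d\bar{q}/d\bar{m}$. At a point where $\bar{m}(\cdot,t)$ is continuous, the convergence of the minima forces $y_*(x,t/\tau)\to\bar{y}_*(x,t)$, the terms $\pm\frac{U_0}{\tau}e^{-t/\tau^2}$ in the scaled form of \eqref{ueq0} vanish, and monotonicity of $\widetilde{m}_0$ gives $u^{\tau}(x,t)\to-\widetilde{m}_0(\bar{y}_*(x,t))$, which by the characteristic structure of the drift solution equals $-\widehat{m}(x,t)=\bar{u}(x,t)$.

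The main obstacle is the behaviour at the atoms of $\bar{m}_x$, i.e. at the $\delta$-shocks, which carry positive $\bar{m}_x$-mass and therefore cannot be neglected in an a.e. statement. There the velocity is a mass-weighted average rather than a characteristic-foot value, so evaluating \eqref{ueq0} pointwise is not enough; instead one must use that, at such a point $x_s$, $\bar{u}(x_s)=[\bar{q}](x_s)/[\bar{m}](x_s)=-\widehat{m}(x_s)$ (consistent with the Vol'pert product rule behind $\bar{q}_x=-\widehat{m}\,\bar{m}_x$) and that the one-sided limits of $m^{\tau}$ and $q^{\tau}$ at $x_s$ converge to those of $\bar{m}$ and $\bar{q}$ by the monotone stability of $\nu^{\tau}\to\bar{\nu}$. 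Taking the quotient of the jumps then yields $u^{\tau}(x_s,t)\to\bar{u}(x_s,t)$. Combining the continuity-point and atom cases exhausts $\bar{m}_x$-almost every point, which completes the proof; the care needed to make the minimizer selection \eqref{y*eq} robust near shock locations, and to justify the jump-quotient limit uniformly in $\tau$, is where I expect the real work to lie.
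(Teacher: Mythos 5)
Your overall architecture mirrors the paper's Section 5: convergence of the scaled potential $F^{\tau}(y;x,t)=F(y;x,t/\tau)$ to $\bar F(y;x,t)$, boundedness of the minimizers via \eqref{2.1b}, convergence of mass and momentum, and then the Radon--Nikodym quotient to pass to the velocity. Your density step, which goes through the minimum value $\nu^{\tau}\to\bar\nu$ and distributional differentiation using \eqref{numnuq}, is a legitimate and arguably cleaner variant of the paper's route through limit points $\hat y(x,t)$ of the minimizers $y_*^{\tau}(x,t)$. One small caveat there: your ``dominated convergence'' derivation of $\tfrac1\tau q^{\tau}\to\bar q$ is incomplete as written, because the upper integration limit $y_*^{\tau}(x,t){\pm}$ in \eqref{qeq} also moves with $\tau$; the pointwise statement needs the minimizer convergence, which you only invoke later and which the paper makes the centerpiece of its argument.

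The genuine gap is your treatment of the atoms of $\bar m_x$. You claim that at a $\delta$-shock $x_s$ the one-sided limits $m^{\tau}(x_s{\pm},t)$ and $\tfrac1\tau q^{\tau}(x_s{\pm},t)$ converge to $\bar m(x_s{\pm},t)$ and $\bar q(x_s{\pm},t)$, and then take the jump quotient at $x_s$. This step fails, because the approximating shock is generically displaced from $x_s$ by $O(\tau)$, so that $m^{\tau}(\cdot,t)$ has no jump at $x_s$ at all and your quotient is $0/0$. Concretely, take $\rho_0=m_1\delta_{y_1}+m_2\delta_{y_2}$ with a common initial velocity $u_0\equiv v\neq 0$: the scaled free trajectories are $\eta+u_0(\eta)\tau(1-e^{-t/\tau^2})+\widetilde m_0(\eta)(\tau^2-\tau^2e^{-t/\tau^2}-t)$, so after the collision the $\tau$-shock sits at $x_s+v\tau(1-e^{-t/\tau^2})$, while $\bar m_x=M\delta_{x_s}$. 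Hence $m^{\tau}\equiv\bar m(x_s{-},t)$ in a whole neighborhood of $x_s$, $[m^{\tau}](x_s)=0$, and the jump-quotient argument collapses precisely at the point carrying all of the $\bar m_x$-mass. (The displacement is not a removable technicality: for such $\tau$ the point $(x_s,t)$ lies in the vacuum of the $\tau$-solution, where \eqref{ueq0} assigns a vacuum velocity whose scaled limit is $-\widetilde m_0(y_1{-})=M/2$, not $\bar u(x_s,t)=0$, so any argument that evaluates quantities exactly at $x_s$ for fixed $\tau$ is in jeopardy.) The paper's proof is structured exactly to avoid this: for $x\in{\rm spt}\{\bar\rho\}$ it fixes continuity points $x_1<x<x_2$ of $\bar m(\cdot,t)$, passes $\tau\to 0$ in the increment quotient $\big(\tfrac1\tau q^{\tau}(x_2,t)-\tfrac1\tau q^{\tau}(x_1,t)\big)\big/\big(m^{\tau}(x_2,t)-m^{\tau}(x_1,t)\big)$ over the \emph{fixed} interval, whose denominator converges to $\bar m(x_2,t)-\bar m(x_1,t)\geq[\bar m](x,t)>0$, and only afterwards lets $x_1,x_2\to x{\pm}$, identifying the result with $\bar u$ through \eqref{qeum} and \eqref{4.8}. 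Your atom case would need to be replaced by this interchange-of-limits structure (or some other mechanism that controls the $O(\tau)$ drift of the shock location); as proposed, it is a step that fails.
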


\section{Proof of the Formula for Pressureless Euler--Poisson equations} 
In \S 3.1, we use the generalized potential $F(y;x,t)$ to construct 
the mass $m(x,t)$ and momentum $q(x,t)$ 
via the analysis of the backward characteristics;
in \S 3.2, by using the forward generalized characteristics, 
velocity $u(x,t)$ and energy $E(x,t)$ are constructed,  
and prove the relations $q_x=um_x$ and $E_x=u^2m_x$ in the sense of the Radon--Nikodym derivatives;
in \S 3.3--3.4, by using the three generalized potentials $F(y;x,t)$, $G(y;x,t)$, and $H(y;x,t)$ 
and the relations $q_x=um_x$ and $E_x=u^2m_x$, 
we prove that the $(m,u)=(m(x,t),u(x,t))$, defined by $\eqref{meq1}$ and $\eqref{ueq1}$, 
solve the mass equation and momentum equation as in Definition $\ref{weakdef}$.

\subsection{Construction of the mass and momentum}
For any fixed $(x,t)\in\mathbb{R}\times\mathbb{R}^+$,
since $F(\cdot\,;x,t)$ is left continuous in $y$, 
then for any $y_0 \in S(x,t)$, 
\begin{equation}\label{nuxt}
\nu(x,t)=
\begin{cases}
F(y_0;x,t) & {\rm if}\ F(y;x,t)\ {\rm achieves \ its \ minimum \ at}\ y_0, \\[1mm]
F(y_0{+};x,t) & {\rm if\ otherwise}.
\end{cases}
\end{equation}

\begin{Lem}\label{minlem}
Assume that $y_0 \in S(x,t)$. If $[m_0(y_0)]:=m_0(y_0{+})-m_0(y_0{-})>0$, then
\begin{equation}\label{nueq}
\nu(x,t)=\mathop{\min}\limits_{y\in\mathbb{R}} F(y;x,t)=
\begin{cases}
F(y_0;x,t) &\quad {\rm if}\ c(y_0;x,t) \leq u_0(y_0),\\[1mm]
F(y_0{+};x,t) &\quad {\rm if}\  c(y_0;x,t)> u_0(y_0),
\end{cases}
\end{equation}
where $c(y_0;x,t)$ is given by
\begin{equation*}
c(y_0;x,t)=\frac{x-y_0}{\tau-\tau e^{-\frac{t}{\tau}}}- \widetilde{m}_0(y_0)\big(\tau-\frac{t}{1-e^{-\frac{t}{\tau}}}\big).
\end{equation*}
\end{Lem}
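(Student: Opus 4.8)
The plan is to localise everything to the atom at $y_0$ and to reduce the dichotomy in \eqref{nueq} to the sign of the jump of $F(\cdot\,;x,t)$ across $y_0$. Writing the integrand of \eqref{potentialF} as
\[
g(\eta):=\eta+u_0(\eta)(\tau-\tau e^{-\frac{t}{\tau}})+\widetilde{m}_0(\eta)(\tau^2-\tau^2 e^{-\frac{t}{\tau}}-\tau t)-x,
\]
so that $F(y;x,t)=\int_{-\infty}^{y{-}}g\,{\rm d}m_0$, I would first observe that $g\in L^1({\rm d}m_0)$, since $u_0\in L^\infty_{\rho_0}(\mathbb{R})$, $\widetilde{m}_0$ is bounded, and $m_0$ is finite. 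Hence $F(\cdot\,;x,t)$ is left-continuous with right limits at every point, and across the atom the Lebesgue--Stieltjes integral contributes exactly
\[
F(y_0{+};x,t)-F(y_0;x,t)=g(y_0)\,[m_0(y_0)].
\]
As $[m_0(y_0)]>0$, the sign of this jump is the sign of $g(y_0)$.

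Next I would record the algebraic identity linking $g(y_0)$ and $c(y_0;x,t)$. Setting $a:=\tau-\tau e^{-\frac{t}{\tau}}>0$ for $t>0$ and using $\tau^2-\tau^2 e^{-\frac{t}{\tau}}-\tau t=a\big(\tau-\frac{t}{1-e^{-\frac{t}{\tau}}}\big)$, the definition \eqref{xytctm} gives $a\,c(y_0;x,t)=x-y_0-\widetilde{m}_0(y_0)\big(\tau^2-\tau^2 e^{-\frac{t}{\tau}}-\tau t\big)$, whence
\[
g(y_0)=a\big(u_0(y_0)-c(y_0;x,t)\big).
\]
Since $a>0$, this yields $g(y_0)\ge 0\Leftrightarrow c(y_0;x,t)\le u_0(y_0)$ and $g(y_0)<0\Leftrightarrow c(y_0;x,t)>u_0(y_0)$. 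This part is a routine computation.

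The last step converts the sign of the jump into the value of $\nu$, and this is where I expect the real work to lie. Using $y_0\in S(x,t)$ from \eqref{nuSeq}, there is a sequence $y_n\to y_0$ with $F(y_n;x,t)\to\nu(x,t)$; passing to a subsequence lying entirely on one side of $y_0$ (or identically equal to $y_0$) and invoking the left-continuity and the right limit of $F$, one concludes $\nu(x,t)\in\{F(y_0;x,t),F(y_0{+};x,t)\}$, and therefore $\nu(x,t)=\min\{F(y_0;x,t),F(y_0{+};x,t)\}$. Combining with the jump formula then closes the argument: if $c(y_0;x,t)\le u_0(y_0)$ then $g(y_0)\ge 0$, the jump is nonnegative, and the minimum is the left value $\nu=F(y_0;x,t)$; if $c(y_0;x,t)>u_0(y_0)$ then $g(y_0)<0$ and $\nu=F(y_0{+};x,t)$, which is exactly \eqref{nueq} and is consistent with \eqref{nuxt}. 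The main obstacle is precisely this reduction to the two one-sided values: it must rely carefully on the definition of $S(x,t)$ as the set of limits of minimizing sequences together with the regulated structure of the Stieltjes integral $F(\cdot\,;x,t)$, so that no smaller value of $F$ can occur at a point near but distinct from $y_0$.
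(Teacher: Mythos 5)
Your proposal is correct and takes essentially the same route as the paper: both arguments reduce the claim to the sign of the jump $F(y_0{+};x,t)-F(y_0;x,t)=(\tau-\tau e^{-\frac{t}{\tau}})\big(u_0(y_0)-c(y_0;x,t)\big)[m_0(y_0)]$ across the atom and then read off $\nu(x,t)$ accordingly. The only difference is that you derive inline the reduction $\nu(x,t)=\min\{F(y_0;x,t),F(y_0{+};x,t)\}$ for $y_0\in S(x,t)$, which the paper records separately as \eqref{nuxt} just before the lemma and then invokes.
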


\begin{proof}
From $u_0\in L^\infty_{\rho_0}$, if $[m_0(y_0)]=m_0(y_0{+})-m_0(y_0{-})>0$, 
then $\rho_0$ has positive mass at point $y_0$, so that
$u_0(\cdot)$ is well defined at point $y_0$. 
If $c(y_0;x,t) \leq u_0(y_0)$, then
$$F(y_0{+};x,t)-F(y_0{-};x,t)=(\tau-\tau e^{-\frac{t}{\tau}})(u_0(y_0)-c(y_0;x,t))[m_0(y_0)] \geq 0,$$
which, by the left continuity of $F(\cdot\,;x,t)$, implies that $F(y;x,t)$ achieves its minimum at $y_0$. 
This, by $\eqref{nuxt}$, yields $\eqref{nueq}$ for the case that $c(y_0;x,t)\leq u_0(y_0)$. 
By the same arguments, it is direct to check $\eqref{nueq}$ for the case that $c(y_0;x,t)>u_0(y_0)$.
\end{proof}

\begin{Lem}\label{convergelem}
For $\nu(x,t)$ and $S(x,t)$ defined by $\eqref{nuSeq}$, the following statements hold{\rm:}
\begin{itemize}
\item [(i)] $\nu(x,t)$ is continuous in $(x,t)\in\mathbb{R}\times\mathbb{R}^+$.

\vspace{1pt}
\item [(ii)] Let $(x_n,t_n)$ and $y_n \in S(x_n,t_n)$ converge to $(x,t)$ and $y_0$, respectively, then 
\begin{equation}\label{y0ynS}
y_0=\lim_{n\rightarrow \infty}y_n \in S(x,t).
\end{equation}
Thus $S(x,t)$ is closed.

\end{itemize}

\end{Lem}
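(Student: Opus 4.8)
The plan is to reduce both assertions to a single quantitative fact: the potential $F(y;x,t)$ is Lipschitz in $(x,t)$ \emph{uniformly} in $y$, while the coercivity estimate \eqref{2.1b} already confines every near-minimizer to a bounded $y$-interval. Writing the integrand of \eqref{potentialF} as $g(\eta;x,t)=\eta+u_0(\eta)(\tau-\tau e^{-t/\tau})+\widetilde{m}_0(\eta)(\tau^2-\tau^2 e^{-t/\tau}-\tau t)-x$, I would first record the two derivative bounds that drive everything. Differentiating in $x$ gives $\partial_x F=-m_0(y{-})\in[-M,0]$, and differentiating in $t$ gives $\partial_t F=\int_{-\infty}^{y-}\big(u_0(\eta)e^{-t/\tau}+\widetilde{m}_0(\eta)\,\tau(e^{-t/\tau}-1)\big)\,{\rm d}m_0(\eta)$, whose integrand is bounded in modulus by $U_0+\tfrac{M}{2}\tau$ since $e^{-t/\tau}\le 1$ and $|e^{-t/\tau}-1|\le 1$ for $t\ge 0$. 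Consequently there is a constant $L=L(M,U_0,\tau)$ with
\begin{equation*}
\sup_{y\in\mathbb{R}}\big|F(y;x_1,t_1)-F(y;x_2,t_2)\big|\le L\big(|x_1-x_2|+|t_1-t_2|\big)
\end{equation*}
for all $(x_1,t_1),(x_2,t_2)\in\mathbb{R}\times\mathbb{R}^+$.

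For (i), since $\nu=\inf_y F$ and the elementary inequality $|\inf_y F_1-\inf_y F_2|\le\sup_y|F_1-F_2|$ always holds, the displayed bound yields $|\nu(x_1,t_1)-\nu(x_2,t_2)|\le L(|x_1-x_2|+|t_1-t_2|)$, so $\nu$ is locally Lipschitz and in particular continuous. Finiteness of the infimum (so that $\nu$ is genuinely a minimum and the limit sets in \eqref{nuSeq} are meaningful) is exactly what \eqref{2.1b} furnishes: all minimizing sequences stay inside the interval with endpoints $x\pm U_0(\tau-\tau e^{-t/\tau})\mp\tfrac{M}{2}(\tau^2-\tau^2 e^{-t/\tau}-\tau t)$, which move continuously with $(x,t)$.

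For (ii), I would argue by a diagonal selection using (i). Fix $(x_n,t_n)\to(x,t)$ and $y_n\in S(x_n,t_n)$ with $y_n\to y_0$. By the definition \eqref{nuSeq} of $S$, for each $n$ I may pick $z_n$ with $|z_n-y_n|\le 1/n$ and $|F(z_n;x_n,t_n)-\nu(x_n,t_n)|\le 1/n$, whence $z_n\to y_0$. Combining the uniform Lipschitz bound with the continuity of $\nu$ just established,
\begin{equation*}
\big|F(z_n;x,t)-\nu(x,t)\big|\le\big|F(z_n;x,t)-F(z_n;x_n,t_n)\big|+\big|F(z_n;x_n,t_n)-\nu(x_n,t_n)\big|+\big|\nu(x_n,t_n)-\nu(x,t)\big|\longrightarrow 0.
\end{equation*}
Thus $z_n\to y_0$ with $F(z_n;x,t)\to\nu(x,t)$, which is precisely $y_0\in S(x,t)$. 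Specializing to the constant sequence $(x_n,t_n)\equiv(x,t)$ then gives closedness of $S(x,t)$ itself.

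The two derivative estimates are routine; the single point needing care is that they be uniform in $y$ \emph{and} uniform in $t$ across all of $(0,\infty)$ — in particular as $t\downarrow 0$ and as $t\to\infty$. This is exactly where the explicit relaxation factors $e^{-t/\tau}$ and $\tau(e^{-t/\tau}-1)$ are used, both being bounded on $(0,\infty)$, so that the $t$-integrand never blows up even though the coefficient $\tau^2-\tau^2 e^{-t/\tau}-\tau t$ itself diverges. The genuinely structural ingredient, as opposed to computation, remains the coercivity \eqref{2.1b}, without which neither the finiteness of $\nu$ nor the nonemptiness of $S$ would be available.
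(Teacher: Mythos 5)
Your proof is correct. It rests on the same pillar as the paper's argument --- continuity of $F(\cdot\,;x,t)$ in $(x,t)$, \emph{uniformly} in $y$ --- but the execution is genuinely different in two respects, and in both the difference buys you something. For (i), the paper runs a two-sided $\liminf$/$\limsup$ comparison: evaluating $F$ at the (near-)minimizers $y_n$ of the perturbed problems gives $\nu(x,t)\le\liminf_n\nu(x_n,t_n)$, and evaluating at a fixed minimizer $\bar y$ of the limit problem gives $\limsup_n\nu(x_n,t_n)\le\nu(x,t)$; you compress both directions into the elementary inequality $|\inf_y F_1-\inf_y F_2|\le\sup_y|F_1-F_2|$ combined with an explicit Lipschitz constant $L(M,U_0,\tau)$ coming from $\partial_xF=-m_0(y{-})$ and the bound $|u_0(\eta)e^{-t/\tau}+\widetilde m_0(\eta)\tau(e^{-t/\tau}-1)|\le U_0+\tfrac{M}{2}\tau$. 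This yields $\nu$ globally Lipschitz, not merely continuous, which the paper only records later (in Lemma \ref{mqnulem}). For (ii), the paper passes to a subsequence and splits into cases according to whether $\nu(x_n,t_n)=F(y_n;x_n,t_n)$ or $F(y_n{+};x_n,t_n)$, and its assertion that $\liminf_n F(y_n;x_n,t_n)$ equals $F(y_0;x,t)$ or $F(y_0{+};x,t)$ tacitly uses the regulated structure of $F$ in $y$ (left continuity with right limits) at the limit point $y_0$; your diagonal selection of near-minimizers $z_n$ with $|z_n-y_n|\le 1/n$ and $|F(z_n;x_n,t_n)-\nu(x_n,t_n)|\le 1/n$ sidesteps that case analysis entirely, since you never need to evaluate or take one-sided limits of $F$ at $y_0$ --- you only need $F(z_n;x,t)\to\nu(x,t)$ along some $z_n\to y_0$, which is verbatim the membership criterion for $S(x,t)$ in \eqref{nuSeq}. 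The one point that genuinely needs care is the one you flag: the continuity of $F$ in $(x,t)$ must be uniform in $y$, because it is applied at the varying points $y_n$ (resp.\ $z_n$); the paper uses this uniformity implicitly, while you prove it. Both proofs use the coercivity \eqref{2.1b} only to guarantee $\nu>-\infty$ and $S(x,t)\cap{\rm spt}\{\rho_0\}\neq\varnothing$.
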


\begin{proof}
The proof is divided into two steps accordingly.

\smallskip
\noindent
{\bf 1.}  
By the definition of $F(y;x,t)$ in $\eqref{potentialF}$, 
$F(y;x,t)$ is continuous in $(x,t)$.
From $\eqref{nuSeq}$, we have
$$\nu(x_n,t_n)=F(y_n;x_n,t_n)\,\, {\rm or}\,\, F(y_n{+};x_n,t_n)$$ 
so that 
$$\liminf_{n \rightarrow \infty} \nu(x_n,t_n)
=\liminf_{n \rightarrow \infty} F(y_n;x_n,t_n)=F(y_0;x,t) \,\, {\rm or}\,\, F(y_0{+};x,t), \,\,{\rm respectively.}
$$ 
This implies 
$$\nu(x,t) \leq \liminf_{n \rightarrow \infty}\nu(x_n,t_n).$$

On the other hand, without loss of generality, 
assume that $\nu(x,t)=F(\bar{y};x,t)$. 
Since $F(\bar{y};x_n,t_n) \geq \nu(x_n,t_n)$, 
by letting $n \rightarrow \infty$, 
$$\nu(x,t) \geq \limsup_{n \rightarrow \infty}\nu(x_n,t_n).$$  

Therefore, $\nu(x,t)=\lim_{n \rightarrow \infty} \nu(x_n,t_n)$ so that 
$\nu(x,t)$ is continuous in $(x,t)$.

\smallskip
\noindent
{\bf 2.} By choosing a subsequence of $y_n$ (still denoted by $y_n$), 
at least one of the following two holds for all $n$:
$$\nu(x_n,t_n)=F(y_n;x_n,t_n),\qquad\,\,\,\nu(x_n,t_n)=F(y_n{+};x_n,t_n).$$
Without loss of generality, assume that $\nu(x_n,t_n)=F(y_n;x_n,t_n)$. 
By the continuity of $F(y;x,t)$ in $(x,t)$, we have $|F(y_n;x,t)-F(y_n;x_n,t_n)|=o(1)$ so that 
$$F(y_n;x,t) \longrightarrow \nu(x,t)\qquad\,\,\, {\rm as}\ n \rightarrow \infty.$$
This yields $\eqref{y0ynS}$.
\end{proof}

We now define $y_*(x,t)$ and $y^*(x,t)$ by
\begin{align}\label{y*eq1}
y_*(x,t):=\inf \big\{S(x,t) \cap {\rm spt}\{\rho_0\} \big\},\qquad\,\,\,
y^*(x,t):=\lim_{\epsilon \rightarrow 0{+}} y_*(x{+}\epsilon,t).
\end{align}
Here $y^*(x,t)$ is well-defined due to the fact that 
$y_*(x,t)$ is nondecreasing in $x$, {\it i.e.}, 
\begin{equation}\label{y*increas}
y_*(x_1,t)\le y_*(x_2,t)\qquad {\rm for\ any}\ x_1<x_2\ {\rm and}\ t>0.
\end{equation}
This can be seen as follows:
Without loss of generality, assume that 
$$\nu(x_1,t)=F(y_*(x_1,t);x_1,t),\qquad \nu(x_2,t)=F(y_*(x_2,t);x_2,t).$$
Then, it follows from $\eqref{potentialF}$ that
\begin{align}
0&\ge F(y_*(x_2,t);x_2,t)-F(y_*(x_1,t);x_2,t)\nonumber\\
&=F(y_*(x_2,t);x_1,t)-F(y_*(x_1,t);x_1,t)
-(x_2-x_1)\int_{y_*(x_1,t){-}}^{y_*(x_2,t){-}}\,{\rm d}m_0(\eta). \label{Fy*epsilon}
\end{align}
If $F(y_*(x_2,t);x_1,t)-F(y_*(x_1,t);x_1,t)=0$, then $y_*(x_2,t)\in S(x_1,t) \cap {\rm spt}\{\rho_0\}$, 
which, by the definition of $y_*$ in $\eqref{y*eq1}$, implies that $y_*(x_2,t)\ge y_*(x_1,t)$.
If $F(y_*(x_2,t);x_1,t)-F(y_*(x_1,t);x_1,t)>0$, 
then $\eqref{Fy*epsilon}$ implies that $m_0(y_*(x_2,t))>m_0(y_*(x_1,t))$. 
This, by the nondecreasing property of $m_0(x)$, yields that $y_*(x_2,t)> y_*(x_1,t)$.

\vspace{2pt}
By Lemma $\ref{convergelem}$, $S(x,t)$ is closed, 
then from $\eqref{y*eq1}$--$\eqref{y*increas}$,
for any $(x,t)\in\mathbb{R}\times\mathbb{R}^+$,
\begin{equation}\label{y*y*Y}
y_*(x,t),\, y^*(x,t) \in S(x,t) \cap {\rm spt}\{\rho_0\},\qquad\,\,\, y_*(x,t) \leq y^*(x,t).
\end{equation}

We now present the definition of the backward characteristics 
from the points on the upper half-plane: 
For any $(x_0,t_0)\in\mathbb{R}\times\mathbb{R}^+$, suppose that $y_0\in S(x_0,t_0)$.
If $[m_0(y_0)]=0$, or $[m_0(y_0)]>0$ with $c(y_0;x_0,t_0)-u_0(y_0)=0$, we define the curve $l_{y_0}(x_0,t_0)$ for $t<t_0$ by 
\begin{equation}\label{lyxtm0}
x=y_0{+}c(y_0;x_0,t_0) (\tau-\tau e^{-\frac{t}{\tau}})+\widetilde{m}_0(y_0)(\tau^2-\tau^2 e^{-\frac{t}{\tau}}-\tau t);
\end{equation}
and if $[m_0(y_0)]>0$ with ${\pm}(c(y_0;x_0,t_0)-u_0(y_0))>0$, 
we define the curve $l_{y_0}(x_0,t_0)$ for $t<t_0$ by 
\begin{equation}\label{lyxt000}
x=
y_0{+} c(y_0{\pm};x_0,t_0) (\tau-\tau e^{-\frac{t}{\tau}})+\widetilde{m}_0(y_0{\pm})(\tau^2-\tau^2 e^{-\frac{t}{\tau}}-\tau t).
\end{equation}
In the above, $c(y;x,t)$ and $c(y{\pm};x,t)$ are given by $\eqref{xytctm}$. 
See also Fig. $\ref{char12}$.

\begin{figure}[H]
    \centering
    \includegraphics[width=0.7\linewidth]{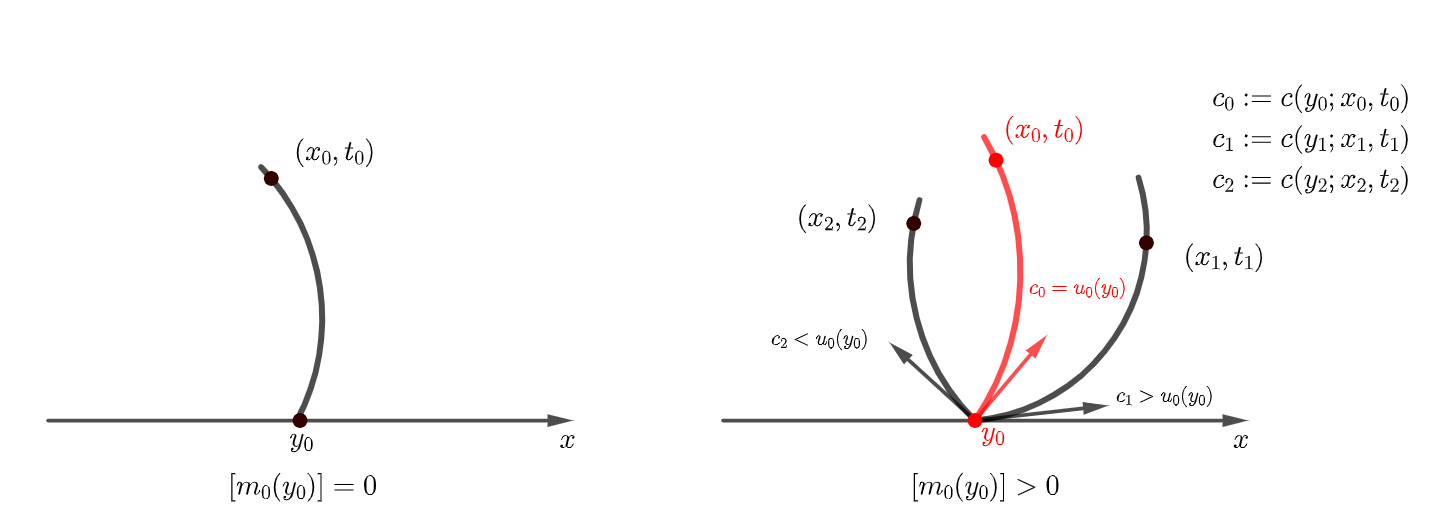}
    \caption{Different cases of backward characteristics. 
    When $[m_0(y_0)]>0$, there exist three different types of backward characteristics for some points.}
    \label{char12}
\end{figure}

\smallskip
We now present the following lemma on the characteristics.
\begin{Lem}\label{curvelem}
Let $y_*(x,t)$ and $y^*(x,t)$ be defined by $\eqref{y*eq1}$ and let 
the curve $l_y(x,t)$ be given by $\eqref{lyxtm0}$--$\eqref{lyxt000}$. 
Then, the following statements hold{\rm:} 

\noindent
\ {\rm(i)} For any $(x_0,t_0)\in\mathbb{R}\times\mathbb{R}^+$, assume that $y_0\in S(x_0,t_0)$. Then along $l_{y_0}(x_0,t_0)$,
\begin{equation}\label{y0Sx0t0}
y_0\in S(x,t) \subset S(x_0,t_0).
\end{equation}
Furthermore, if $y_0=y_*(x_0,t_0)$, then for any $(x,t)\in l_{y_*(x_0,t_0)}(x_0,t_0)$,
\begin{equation}\label{yxtx0t0}
y_*(x,t)=y_*(x_0,t_0).
\end{equation}




\noindent
\ {\rm(ii)} For any $x_1 <x_2$ with $t>0$, 
\begin{equation}\label{geoentropy}
y_*(x_1,t)\le y^*(x_1,t) \leq y_*(x_2,t)\le y^*(x_2,t). 
\end{equation}

\noindent
\ {\rm(iii)} For any $(x,t)\in\mathbb{R}\times\mathbb{R}^+$,
\begin{equation}\label{y*left}
\lim_{\epsilon\rightarrow 0{+}}y_*(x{-}\epsilon,t)=\lim_{\epsilon \rightarrow 0{+}}y^*(x{-}\epsilon,t)=y_*(x,t),
\end{equation}
\begin{equation}\label{y*right}
\lim_{\epsilon\rightarrow 0{+}}y_*(x{+}\epsilon,t)=\lim_{\epsilon \rightarrow 0{+}}y^*(x{+}\epsilon,t)=y^*(x,t).
\end{equation}
\end{Lem}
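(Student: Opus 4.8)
The plan is to treat part~(i) as the substantive claim and to read off parts~(ii)--(iii) from the monotonicity of $y_*$ already recorded in \eqref{y*increas}, the closedness of $S(x,t)$ from Lemma~\ref{convergelem}, and the definition of $y^*$ in \eqref{y*eq1}. Throughout I abbreviate $a(t)=\tau-\tau e^{-t/\tau}$ and $b(t)=\tau^2-\tau^2 e^{-t/\tau}-\tau t$, so that the integrand of $F$ in \eqref{potentialF} is $\eta-x+u_0(\eta)a(t)+\widetilde{m}_0(\eta)b(t)$ and, by \eqref{xytctm}, $c(\eta;x,t)=\frac{x-\eta}{a(t)}-\widetilde{m}_0(\eta)\frac{b(t)}{a(t)}$. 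The elementary facts I will invoke repeatedly are that $a(t)>0$ is increasing, $b(t)\le 0$, and $b(t)/a(t)=\tau-\frac{t}{1-e^{-t/\tau}}$ is decreasing in $t$; each is a one-line calculus check.

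For part~(i) I would first verify the invariance of the initial speed along the curve: substituting the parametrization \eqref{lyxtm0} (resp.\ \eqref{lyxt000}) of $l_{y_0}(x_0,t_0)$ into \eqref{xytctm} yields $c(y_0;x,t)=c(y_0;x_0,t_0)$ for every $(x,t)\in l_{y_0}(x_0,t_0)$. The preservation $y_0\in S(x,t)$ then rests on the observation that, since $a(t)>0$, the sign of the integrand of $F(\cdot;x,t)$ at $\eta$ is governed by $c$: it is $\le 0$ iff $c(\eta;x,t)\ge u_0(\eta)$ and $\ge 0$ iff $c(\eta;x,t)\le u_0(\eta)$. Using the curve parametrization together with the invariance of $c(y_0;\cdot)$ gives the explicit relation
\[
c(\eta;x,t)-u_0(\eta)=\big(c(y_0;x_0,t_0)-u_0(\eta)\big)+\frac{y_0-\eta}{a(t)}-\big(\widetilde{m}_0(\eta)-\widetilde{m}_0(y_0)\big)\frac{b(t)}{a(t)},
\]
and the same relation with $(x_0,t_0)$ in place of $(x,t)$. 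Subtracting and using that $1/a$ and $b/a$ are decreasing in $t$ (so for $t<t_0$ their increments have the needed signs) together with the monotonicity of $\widetilde{m}_0$, I would conclude $c(\eta;x,t)-u_0(\eta)\ge c(\eta;x_0,t_0)-u_0(\eta)\ge 0$ for $m_0$-a.e.\ $\eta<y_0$ and the reversed chain for $\eta>y_0$, where the second inequality in each chain is exactly the statement $y_0\in S(x_0,t_0)$. Hence the integrand of $F(\cdot;x,t)$ is nonpositive to the left of $y_0$ and nonnegative to the right, so $F(\cdot;x,t)$ attains its minimum at $y_0$ and $y_0\in S(x,t)$ (jumps at $y_0$ being handled through the left continuity of $F$ and Lemma~\ref{minlem}).

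It remains to prove the containment $S(x,t)\subset S(x_0,t_0)$, from which the ``furthermore'' part follows: if $y_0=y_*(x_0,t_0)$ and some $y'\in S(x,t)\cap{\rm spt}\{\rho_0\}$ had $y'<y_0$, the containment would force $y'\in S(x_0,t_0)\cap{\rm spt}\{\rho_0\}$, contradicting $y_0=\inf\big(S(x_0,t_0)\cap{\rm spt}\{\rho_0\}\big)$, so that with $y_0\in S(x,t)$ we obtain $y_*(x,t)=y_0$. I expect the containment to be the \emph{main obstacle}. The naive term-by-term sign comparison used above is no longer decisive, because the time dependence in the relation for $c(\eta;\cdot)$ is pivoted at $y_0$ rather than at the competing minimizer $y'$, so the correction terms need not preserve the inequality around $y'$ by themselves. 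The extra ingredient is that any two points of $S(x,t)$ are co-minimizers, whence the integrand of $F(\cdot;x,t)$ vanishes $m_0$-a.e.\ between $y_0$ and $y'$, i.e.\ $c(\eta;x,t)=u_0(\eta)$ there; feeding this back into the relation between $c(\eta;x_0,t_0)$ and $c(\eta;x,t)$ and splitting into the ranges $\eta$ between $y_0$ and $y'$ versus beyond recovers the correct sign of $c(\eta;x_0,t_0)-u_0(\eta)$ around $y'$. This is the delicate bookkeeping; geometrically it encodes that the backward characteristics issuing from $(x,t)$ lie inside the fan issuing from $(x_0,t_0)$ (cf.\ Fig.~\ref{char12}), the non-crossing \eqref{geoentropy} being its shadow.

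Finally, parts~(ii) and~(iii) are short. For \eqref{geoentropy}, since $y_*$ is nondecreasing in $x$ by \eqref{y*increas}, for $\epsilon<x_2-x_1$ we have $y_*(x_1+\epsilon,t)\le y_*(x_2,t)$; letting $\epsilon\to 0+$ and using \eqref{y*eq1} gives $y^*(x_1,t)\le y_*(x_2,t)$, while the outer inequalities are \eqref{y*y*Y}. For \eqref{y*left}, the monotone limit $L:=\lim_{\epsilon\to0+}y_*(x-\epsilon,t)\le y_*(x,t)$ exists; since $y_*(x-\epsilon,t)\in S(x-\epsilon,t)\cap{\rm spt}\{\rho_0\}$ and $(x-\epsilon,t)\to(x,t)$, Lemma~\ref{convergelem}(ii) and closedness of ${\rm spt}\{\rho_0\}$ yield $L\in S(x,t)\cap{\rm spt}\{\rho_0\}$, so $L\ge y_*(x,t)$ and hence $L=y_*(x,t)$; the companion limit for $y^*$ follows by squeezing $y_*(x-\epsilon,t)\le y^*(x-\epsilon,t)\le y_*(x,t)$ with part~(ii). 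The right-hand limits \eqref{y*right} are immediate from the definition of $y^*$ and the squeeze $y_*(x+\epsilon,t)\le y^*(x+\epsilon,t)\le y_*(x+2\epsilon,t)$.
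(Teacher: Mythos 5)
Your parts (ii) and (iii) are correct and follow the same route as the paper (monotonicity \eqref{y*increas}, closedness of $S(x,t)$ from Lemma \ref{convergelem}, and a squeeze). The genuine gap is in part (i), and it occurs twice: you identify the hypothesis $y_0\in S(x_0,t_0)$ with the \emph{pointwise} sign condition $c(\eta;x_0,t_0)\ge u_0(\eta)$ for $m_0$-a.e.\ $\eta<y_0$ (and the reverse for $\eta>y_0$). This equivalence is false: membership in $S(x_0,t_0)$ is minimality of $F(\cdot\,;x_0,t_0)$, i.e.\ an \emph{integral} condition on $g(\eta):=a(t_0)\big(u_0(\eta)-c(\eta;x_0,t_0)\big)$, namely that its running integral from $y_0$ is nonnegative to the right and nonpositive to the left; it gives no control on the sign of $g$ itself. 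For example, with $\rho_0=\mathbf{1}_{[0,2]}$ one may choose $u_0$ so that $g=\mathbf{1}_{(0,1)}-\tfrac14\mathbf{1}_{(1,2)}$: then $y_0=0$ is a global minimizer while $g<0$ on a set of positive $m_0$-measure to its right. The same confusion undermines your argument for the containment $S(x,t)\subset S(x_0,t_0)$: two co-minimizers $y_0<y'$ of $F(\cdot\,;x,t)$ only give a vanishing integral with nonnegative running integral between them, which does \emph{not} force the integrand to vanish $m_0$-a.e.\ there (it can be positive, then negative, with cancellation). Since both halves of your part (i) rest on these pointwise readings of minimality, the proof as written does not go through.

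The repair is one step away from what you wrote, and it is exactly the paper's argument: multiply your (correct) identity for $c(\eta;x,t)-u_0(\eta)$ by $a(t)>0$ and integrate in ${\rm d}m_0(\eta)$ from $y_0{\pm}$ to $y{-}$. Using $a(t)=\lambda\, a(t_0)$ with $\lambda=\frac{1-e^{-t/\tau}}{1-e^{-t_0/\tau}}\in(0,1]$, this yields the paper's decomposition \eqref{FFyy0},
\begin{align*}
F(y;x,t)-F(y_0{\pm};x,t)
&=\lambda\big(F(y;x_0,t_0)-F(y_0{\pm};x_0,t_0)\big)
+(1-\lambda)\int_{y_0{\pm}}^{y{-}}(\eta-y_0)\,{\rm d}m_0(\eta)\\
&\quad+\frac{\tau f(t)}{1-e^{-t_0/\tau}}\int_{y_0{\pm}}^{y{-}}\big(\widetilde{m}_0(\eta)-\widetilde{m}_0(y_0{\pm})\big)\,{\rm d}m_0(\eta),
\end{align*}
where $f(t)=t_0-t+te^{-t_0/\tau}-t_0e^{-t/\tau}>0$ is equivalent to your ``$b/a$ decreasing'' fact. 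Here the first term is nonnegative by the \emph{integral} minimality at $(x_0,t_0)$ — which is all the hypothesis provides — and the other two by monotonicity of $\eta\mapsto\eta$ and of $\widetilde{m}_0$, which genuinely are pointwise facts. This gives $y_0\in S(x,t)$, and the containment comes for free: if $y'\in S(x,t)$, the left-hand side vanishes, so each of the three nonnegative terms vanishes; the first forces $y'\in S(x_0,t_0)$, and the second even shows $m_0\big((y_0\wedge y',\,y_0\vee y')\big)=0$, which is the correct form of your ``integrand vanishes between co-minimizers'' claim — a conclusion of the decomposition, not an admissible input. With the containment in hand, your deduction of \eqref{yxtx0t0} and your proofs of \eqref{geoentropy}--\eqref{y*right} are fine as stated.
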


\begin{proof}
The proof is divided into three steps accordingly.

\smallskip
\noindent
{\bf 1.} Suppose that $y_0 \in S(x_0,t_0)$, then by \eqref{nuxt}, 
 $$\nu(x_0,t_0)=F(y_0;x_0,t_0)\qquad \text{or} \quad  \nu(x_0,t_0)=F(y_0{+};x_0,t_0).$$
Along curve $l_{y_0}(x_0,t_0)$, we have that, for any $y \in \mathbb{R}$,  
\begin{align}
&\ F(y;x,t)-F(y_0{\pm};x,t)\nonumber\\[1mm]
&=\int_{y_0{\pm}}^{y{-}} \eta+u_0(\eta) (\tau -\tau e^{-\frac{t}{\tau}})+\widetilde{m}_0(\eta)(\tau^2-\tau^2 e^{-\frac{t}{\tau}}-\tau t)-x \,{\rm d}m_0(\eta)\nonumber\\[1mm]
&=\frac{1-e^{-\frac{t}{\tau}}}{1 - e^{-\frac{t_0}{\tau}}}
\int_{y_0{\pm}}^{y{-}} \eta+u_0(\eta) (\tau -\tau e^{-\frac{t_0}{\tau}})+\widetilde{m}_0(\eta)(\tau^2-\tau^2 e^{-\frac{t_0}{\tau}}-\tau t_0)-x_0 \,{\rm d}m_0(\eta) \nonumber\\[1mm]
&+\frac{e^{-\frac{t}{\tau}}-e^{-\frac{t_0}{\tau}}}{1 - e^{-\frac{t_0}{\tau}}}\int_{y_0{\pm}}^{y{-}} \eta-y_0 \,{\rm d}m_0(\eta)  \nonumber\\[1mm] &+\frac{\tau}{1-e^{-\frac{t_0}{\tau}}}(t_0-t+t e^{-\frac{t_0}{\tau}}-t_0 e^{-\frac{t}{\tau}})\int_{y_0{\pm}}^{y{-}} \big(\widetilde{m}_0(\eta)-\widetilde{m}_0(y_0{\pm})\big) \,{\rm d}m_0(\eta) 
\nonumber\\[1mm]
&
\geq 0,\label{FFyy0}
\end{align}
where we exploit $F(y;x_0,t_0)-F(y_0{\pm};x_0,t_0)\geq 0$ and the fact that
\begin{equation}\label{3.1}
f(t):=t_0-t+t e^{-\frac{t_0}{\tau}}-t_0 e^{-\frac{t}{\tau}}>0.
\end{equation}
In fact, $f''(t)=-\frac{t_0}{\tau^2}e^{-\frac{t}{\tau}}<0$, then $f'(t)=-1+e^{-\frac{t_0}{\tau}}+\frac{t_0}{\tau}e^{-\frac{t}{\tau}}$ is strictly decreasing.
Since $f'(0)=-1+e^{-\frac{t_0}{\tau}}+\frac{t_0}{\tau}$, letting $g(x):=-1+e^{-\frac{x}{\tau}}+\frac{x}{\tau}$, then $g'(x)=\frac{1}{\tau}(1-e^{-\frac{x}{\tau}})>0$. 
Thus $f'(0)=g(t_0)>g(0)=0$.
It follows from $f(0)=f(t_0)=0$ that $f(t)>0$ in $t\in (0,t_0)$.

\eqref{FFyy0} infers that $y_0 \in S(x,t)$. 
If there exists $y'\in S(x,t)$ with $y'\neq y_0$, then 
$$
F(y'{\pm};x_0,t_0)-F(y_0{\pm};x_0,t_0)=0,
$$
which imply
$$
y'\in S(x_0,t_0).
$$
This means that $\eqref{y0Sx0t0}$ holds.

For the case that $(x,t)\in l_{y_*(x_0,t_0)}(x_0,t_0)$, 
from \eqref{y0Sx0t0}, $y_*(x_0,t_0)\in S(x,t)\subset S(x_0,t_0)$.
Since $y_*(x,t)=\inf\{S(x,t)\cap{\rm spt}\{\rho_0\}\}$ and $y_*(x_0,t_0)=\inf\{S(x_0,t_0)\cap{\rm spt}\{\rho_0\}\}$, then
$$
y_*(x,t)=y_*(x_0,t_0).
$$

\smallskip
\noindent
{\bf 2.}
Let $x_1<x_2$ with $t>0$.
By $\eqref{y*increas}$, $y_*(x,t)$ is nondecreasing in $x$ so that, 
for sufficiently small $\epsilon>0$,
$$
y_*(x_1,t)\le y_*(x_1{+}\epsilon,t) \le y_*(x_2,t)\le y_*(x_2{+}\epsilon,t),
$$
which, by \eqref{y*eq1}, implies that, as $\epsilon \rightarrow 0{+}$,
$$
y_*(x_1,t)\le y^*(x_1,t)\le y_*(x_2,t)\le y^*(x_2,t).
$$

\smallskip
\noindent
{\bf 3.} By the definition of $y^*(x,t)$ in \eqref{y*eq1}, 
it is direct to see: $\lim_{\epsilon\rightarrow 0{+}}y_*(x{+}\epsilon,t)=y^*(x,t)$.
By Lemma \ref{convergelem}, it follows from \eqref{y*eq1} and \eqref{y*increas} that $\lim_{\epsilon\rightarrow 0{+}}y_*(x{-}\epsilon,t) \in S(x,t)\cap {\rm spt}\{\rho_0\}$ and $y_*(x{-}\epsilon,t)\le y_*(x,t)$.
Similarly, $\lim_{\epsilon\rightarrow 0{+}}y_*(x{-}\epsilon,t)=y_*(x,t)$. 

\vspace{1pt}
From \eqref{y*y*Y} and \eqref{geoentropy}, for sufficiently small $\epsilon>0$,
$$y_*(x{-}\epsilon,t)\le y^*(x{-}\epsilon,t)\le y_*(x,t),$$
which, by $\lim_{\epsilon\rightarrow 0{+}}y_*(x{-}\epsilon,t)=y_*(x,t)$, implies that 
$\lim_{\epsilon\rightarrow 0{+}}y^*(x{-}\epsilon,t)=y_*(x,t)$.

\vspace{1pt}
By the definition of $y^*(x,t)$ in \eqref{y*eq1}, it is direct to see that
$$\lim_{\epsilon \rightarrow 0{+}}y^*(x{+}\epsilon,t)=\lim_{\epsilon \rightarrow 0{+}}\lim_{\delta \rightarrow 0{+}}y_*(x{+}\epsilon{+}\delta,t)=\lim_{\xi\rightarrow0{+}}y_*(x{+}\xi,t)=y^*(x,t).$$
This yields $\eqref{y*right}$.
\end{proof}

\noindent
{\it Definition of the mass $m(x,t)$ and momentum $q(x,t)$.} 
We now use $y_*(x,t)$ to define the mass $m(x,t)$ and momentum $q(x,t)$ as
\begin{eqnarray}
\!\!\!\!\!\!\!\!\!\!\!\!
&&m(x,t):=
\begin{cases}
\int_{{-}\infty}^{y_*(x,t){-}} \,{\rm d}m_0(\eta) \quad {\rm if}\, \nu(x,t)=F(y_*(x,t),x,t),\\[2mm]
\int_{{-}\infty}^{y_*(x,t){+}} \,{\rm d}m_0(\eta)\quad {\rm otherwise}.
\end{cases}\label{meq1}\\[2mm]
\!\!\!\!\!\!\!\!\!\!\!\!
&&q(x,t):=
\begin{cases}
\int_{{-}\infty}^{y_*(x,t){-}}u_0(\eta) e^{-\frac{t}{\tau}}{+} \widetilde{m}_0(\eta)(\tau e^{-\frac{t}{\tau}}-\tau)\,{\rm d}m_0(\eta)  
\quad {\rm if}\,  \nu(x,t)\!=\!F(y_*(x,t),x,t),\\[2mm]
\int_{{-}\infty}^{y_*(x,t){+}}u_0(\eta) e^{-\frac{t}{\tau}}{+} \widetilde{m}_0(\eta)(\tau e^{-\frac{t}{\tau}}-\tau) \, {\rm d}m_0(\eta) 
\quad {\rm otherwise}.
\end{cases}\label{qeq1}
\end{eqnarray}
By view of \eqref{yxtx0t0}, the mass $m(x,t)$ is transported along characteristics.

\subsection{Construction of the velocity and energy}
The key point to define the velocity $u(x,t)$ is to determine the speeds of $\delta$--shocks.
To do this, we need to use the forward generalized characteristics.

For any $(x,t)\in\mathbb{R}\times\mathbb{R}^+$, 
the backward characteristic region of $(x,t)$, denoted by $\Delta(x,t)$, 
is the region contoured by 
the characteristics $l_{y_*(x,t)}(x,t)$ and $l_{y^*(x,t)}(x,t)$ with the $x$--axis
and defined by 
\begin{align*}
\Delta(x,t):=\big\{(\xi,\tau)\in\mathbb{R}\times\mathbb{R}^+\,:\, 
(\xi,\tau)\ {\rm lies\ between}\ l_{y_*(x,t)}(x,t)\ {\rm and}\ l_{y^*(x,t)}(x,t) \big\}.
\end{align*}

First we show that $l_{y^*(x_1,t_0)}(x_1,t_0)$ never interacts with $l_{y_*(x_2,t_0)}(x_2,t_0)$ in $t\in (0,t_0)$ if $x_1<x_2$.
Denoting $y_1:=y^*(x_1,t_0)$ and $y_2:=y_*(x_2,t_0)$, it follows from \eqref{geoentropy} that $y_1\leq y_2$.
Without loss of generality, assume 
$$\nu(x_1,t_0)=F(y_1;x_1,t_0) \quad \text{and} \quad 
\nu(x_2,t_0)=F(y_2;x_2,t_0).
$$
From \eqref{lyxtm0} and \eqref{lyxt000}, without loss of generality, for $t\in (0,t_0)$, assume
\begin{align*}
l_{y_1}(x_1,t_0): \bar{x}_1(t)=y_1+c(y_1;x_1,t_0)(\tau-\tau e^{-\frac{t}{\tau}})+\widetilde{m}_0(y_1)(\tau^2-\tau^2 e^{-\frac{t}{\tau}}-\tau t),\\[1mm]
l_{y_2}(x_2,t_0): \bar{x}_2(t)=y_2+c(y_2;x_2,t_0)(\tau-\tau e^{-\frac{t}{\tau}})+\widetilde{m}_0(y_2)(\tau^2-\tau^2 e^{-\frac{t}{\tau}}-\tau t).
\end{align*}
Then
\begin{align}
\bar{x}_2(t)-\bar{x}_1(t)
&=\frac{e^{-\frac{t}{\tau}}-e^{-\frac{t_0}{\tau}}}{1-e^{-\frac{t_0}{\tau}}}(y_2-y_1)+\frac{1-e^{-\frac{t}{\tau}}}{1-e^{-\frac{t_0}{\tau}}}(x_2-x_1)\nonumber\\[1mm]
&\quad+\frac{\tau}{1-e^{-\frac{t_0}{\tau}}} f(t)(\widetilde{m}_0(y_2)-\widetilde{m}_0(y_1))>0,\label{3.2}
\end{align}
where $f(t)$ is given by \eqref{3.1}.

Thus for any fixed $t$ with $x_1\neq x_2$,
$\Delta(x_1,t)$ never interacts with $\Delta(x_2,t)$ on the upper half-plane.

\begin{Lem}\label{velocitylem}
For any fixed $(x_0,t_0)\in\mathbb{R}\times\mathbb{R}^+$, 
there exists a unique forward generalized characteristic of $(x_0,t_0)${\rm:}
$x=x(t)$ for $t\geq t_0$ with $x(t_0)=x_0$, 
and the following statements on the derivative $x'(t):=\lim_{t^{\prime \prime},t'\rightarrow t{+}0}
\frac{x(t'')-x(t')}{t''-t'}$
hold{\rm:}

\vspace{2pt}
\noindent
\ {\rm(i)} For the case that $y_*(x,t)<y^*(x,t)$, there exist three subcases{\rm:}
\begin{itemize}
\item [(a)] If $[m(x,t)]>0$, then  
\begin{equation}\label{xprime2}
x'(t)= 
\frac{[q(x,t)]}{[m(x,t)]}
=\frac{\int_{y_*(x,t){\pm}}^{y^*(x,t){\pm}} u_0(\eta) e^{-\frac{t}{\tau}}+ \widetilde{m}_0(\eta)(\tau e^{-\frac{t}{\tau}}-\tau)\, {\rm d}m_0(\eta)}
{\int_{y_*(x,t){\pm}}^{y^*(x,t){\pm}}\, {\rm d}m_0(\eta)},
\end{equation}
where the signs $\pm$ are determined by{\rm:} 
In term $y_*(x,t){\pm}$, the sign is $y_*(x,t){-}$ $(${\it resp.}, $y_*(x,t)+$$)$ if $c(y_*(x,t);x,t)-u_0(y_*(x,t))\leq 0$ $(${\it resp.}, $>0$$);$ 
in term $y^*(x,t){\pm}$, the sign is $y^*(x,t){-}$ $(${\it resp.}, $y^*(x,t)+$$)$ if $c(y^*(x,t);x,t)-u_0(y^*(x,t))$ $<0$ $(${\it resp.}, $\geq0$$)$.

\item [(b)] If $[m(x,t)]=0$, 
and $y_*(x{-}\epsilon,t)<y_*(x,t)$ for any $\epsilon>0$, then
\begin{align}
\quad\
x'(t)
=\lim_{\epsilon \rightarrow 0{+}} 
\frac{\int_{y_*(x{-}\epsilon,t){-}}^{y_*(x{+}\epsilon,t){-}} 
u_0(\eta) e^{-\frac{t}{\tau}}+ \widetilde{m}_0(\eta)(\tau e^{-\frac{t}{\tau}}-\tau)\,{\rm d}m_0(\eta)}
{\int_{y_*(x{-}\epsilon,t){-}}^{y_*(x{+}\epsilon,t){-}}\,{\rm d}m_0(\eta)}.\label{xprime4}
\end{align}

\item [(c)] If $[m(x,t)]=0$, 
and $y_*(x{-}\epsilon,t)=y_*(x,t)$ for sufficiently small $\epsilon>0$, then
\begin{equation}\label{xprime3}
x'(t)=\frac{x{-}y^*(x,t)}{\tau e^{\frac{t}{\tau}}-\tau }{+} \widetilde{m}_0(y^*(x,t))\big(\frac{t}{e^{\frac{t}{\tau}}{-}1}{-}\tau \big).
\end{equation}
\end{itemize}

\noindent
\ {\rm(ii)} For the case that $y_*(x,t)=y^*(x,t)$, there exist two subcases:
\vspace{1pt}
\begin{itemize}
\item [(a)] If $[m_0(y_*(x,t))]>0$, then  
\begin{equation}\label{xprime1}
\quad\quad
x'(t)= 
\begin{cases}
\!\frac{x{-}y_*(x,t)}{\tau e^{\frac{t}{\tau}}-\tau }{+} \widetilde{m}_0(y_*(x,t){\pm})\big(\frac{t}{e^{\frac{t}{\tau}}{-}1}{-}\tau \big)
& {\rm if} \, {\pm}\big(c(y_*(x,t);x,t){-}u_0(y_*(x,t))\big){>}0,\\[2mm]
\!\frac{x{-}y_*(x,t)}{\tau e^{\frac{t}{\tau}}-\tau }{+} \widetilde{m}_0(y_*(x,t))\big(\frac{t}{e^{\frac{t}{\tau}}{-}1}{-}\tau \big)
& {\rm if} \, c(y_*(x,t);x,t){-}u_0(y_*(x,t))=0.
\end{cases}
\end{equation}

\item [(b)] If $[m_0(y_*(x,t))]=0$, then
\begin{equation}\label{xprime5}
x'(t)=\frac{x{-}y_*(x,t)}{\tau e^{\frac{t}{\tau}}-\tau }{+} \widetilde{m}_0(y_*(x,t))\big(\frac{t}{e^{\frac{t}{\tau}}{-}1}{-}\tau \big).
\end{equation}
\end{itemize}
See also {\rm Fig.} $\ref{char5678}$.
\end{Lem}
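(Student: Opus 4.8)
The plan is to treat the two assertions of the lemma separately: first the existence and uniqueness of the forward generalized characteristic, and then the five case-by-case formulas for $x'(t)$. For existence I would construct the curve $x(t)$, $t\ge t_0$, as the trajectory along which the mass accumulated at $(x_0,t_0)$ is transported, characterizing it by the geometric requirement that for every $t>t_0$ the point $(x_0,t_0)$ lie in the closure of the backward region $\Delta(x(t),t)$, i.e. that the two bounding curves $l_{y_*(x(t),t)}(x(t),t)$ and $l_{y^*(x(t),t)}(x(t),t)$ straddle $x_0$ at height $t_0$. Using the monotonicity $\eqref{geoentropy}$ of $y_*,y^*$ in $x$ together with their continuity from Lemma $\ref{curvelem}$, the set of admissible $x$ at each fixed $t$ is nonempty and varies continuously in $t$, producing a continuous curve through $(x_0,t_0)$.

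For uniqueness I would invoke the non-crossing estimate $\eqref{3.2}$, which shows that backward characteristic regions emanating from distinct points at the same height never interact. If two forward characteristics $x_1(t)<x_2(t)$ both issued from $(x_0,t_0)$ and separated for some $t_1>t_0$, then the backward regions $\Delta(x_1(t_1),t_1)$ and $\Delta(x_2(t_1),t_1)$ would both have to contain $(x_0,t_0)$ in their closures while being disjoint by $\eqref{3.2}$, a contradiction; hence the curve is single-valued. This is the sticky-particle principle: characteristics, once merged, cannot split.

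For the derivative in the single-characteristic subcases (i)(c), (ii)(a), (ii)(b), the forward characteristic locally coincides with one of the explicit smooth curves $\eqref{lyxtm0}$--$\eqref{lyxt000}$ emitted from the relevant initial point ($y_*$ or $y^*$), the $\pm$ branch being selected by the sign of $c(\cdot\,;x,t)-u_0(\cdot)$ through Lemma $\ref{minlem}$. I would simply differentiate that curve in $t$, obtaining speed $c(y)e^{-t/\tau}+\widetilde{m}_0(y)(\tau e^{-t/\tau}-\tau)$, and then substitute $c(y)$ from $\eqref{xytctm}$ to re-express everything through the current position $x$ and time $t$. The algebraic identity $\tau e^{t/\tau}-\tau=\tau(e^{t/\tau}-1)$ together with $\tfrac{t}{e^{t/\tau}-1}=\tfrac{te^{-t/\tau}}{1-e^{-t/\tau}}$ makes the $\tfrac{t}{1-e^{-t/\tau}}$ contributions cancel, collapsing the expression exactly to $\eqref{xprime5}$, $\eqref{xprime1}$, and $\eqref{xprime3}$.

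For the $\delta$-shock subcases (i)(a) and (i)(b), where the interval $[y_*(x,t),y^*(x,t)]$ carries positive $m_0$-mass, the forward characteristic is the shock trajectory and its speed must be the mass-weighted mean velocity dictated by the inelastic center-of-mass law. Since $u_0(\eta)e^{-t/\tau}+\widetilde{m}_0(\eta)(\tau e^{-t/\tau}-\tau)$ is precisely the instantaneous velocity at time $t$ of the particle starting at $(\eta,0)$, this mean is $[q(x,t)]/[m(x,t)]$ with $m,q$ from $\eqref{meq1}$--$\eqref{qeq1}$, which is $\eqref{xprime2}$; subcase (i)(b) then follows by letting $\epsilon\to 0+$ and invoking the left-continuity $\eqref{y*left}$ to pass to $\eqref{xprime4}$. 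The main obstacle is verifying that the constructed curve is genuinely differentiable in the two-sided sense $\lim_{t'',t'\to t+0}$ rather than merely one-sided—this uses the right-continuity $\eqref{y*right}$ so that the speed is right-continuous in $t$—and, at the transitional points where $[m(x,t)]=0$, reconciling the weighted-average shock speed with the one-sided smooth-characteristic limits. Making these two expressions agree on the boundary between the cases forces a careful coordination of the sign choices $y_*\pm,\,y^*\pm$ (governed by Lemma $\ref{minlem}$) with the vanishing of the mass jump, and this bookkeeping is where the delicacy of the argument lies.
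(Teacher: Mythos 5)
Your existence--uniqueness argument for the forward characteristic is essentially the paper's: the paper defines the thresholds $a(t_1),b(t_1)$ in \eqref{at1bt1} as the supremum/infimum of points whose backward characteristics pass to the left/right of $(x_0,t_0)$, and shows $a(t_1)=b(t_1)$ using the non-interaction estimate \eqref{3.2}; your ``straddling'' characterization and your disjointness contradiction are the same mechanism in different words. That part is fine.

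The genuine gap is in the $\delta$-shock case (i)(a), which is the analytic core of the lemma. You assert that the speed ``must be the mass-weighted mean velocity dictated by the inelastic center-of-mass law,'' but in this framework that law is precisely the conclusion \eqref{xprime2} to be proven: the forward characteristic is a purely geometric object produced by the threshold construction, and nothing established so far says that it transports momentum, or that particles inside $[y_*,y^*]$ move with the velocities $u_0(\eta)e^{-t/\tau}+\widetilde m_0(\eta)(\tau e^{-t/\tau}-\tau)$ after collision (they do not; that expression is the free-flight velocity). The paper instead computes $x'(t)$ variationally: comparing the values of the generalized potential $F$ at the minimizers for two nearby times $t'<t''$ along the curve yields the two-sided difference-quotient bounds \eqref{loweq}--\eqref{upeq}, and these are closed up using the convergence of minimizers \eqref{limyyyy} --- itself a nontrivial step requiring the argument around \eqref{Yy*xt} --- to give \eqref{xprimetm0}. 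Without this computation the identification $x'(t)=[q]/[m]$ is assumed, not derived. Two consequences of the same omission: (i)(b) cannot be obtained, as you propose, by letting $\epsilon\to 0+$ in the formula of (i)(a), since $[m(x,t)]=0$ there and a spatial limit of shock speeds at neighboring points does not by itself control the \emph{time} derivative of the characteristic through $(x,t)$; the paper re-runs the bounds \eqref{loweq}--\eqref{upeq} to get \eqref{xprime4}. And in (ii)(a)--(b) your claim that the forward characteristic locally coincides with a smooth backward characteristic fails in the subcase where $y_*(x(t_a),t_a)<y^*(x(t_a),t_a)$ for every $t_a>t$ (the curve is instantaneously a $\delta$-shock even though no mass has yet accumulated); the paper handles exactly this subcase (Steps 4(b), 5(b)) by falling back on the same difference-quotient machinery, not by differentiating an explicit curve.
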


\begin{figure}[H]
    \centering
    \includegraphics[width=0.8\linewidth]{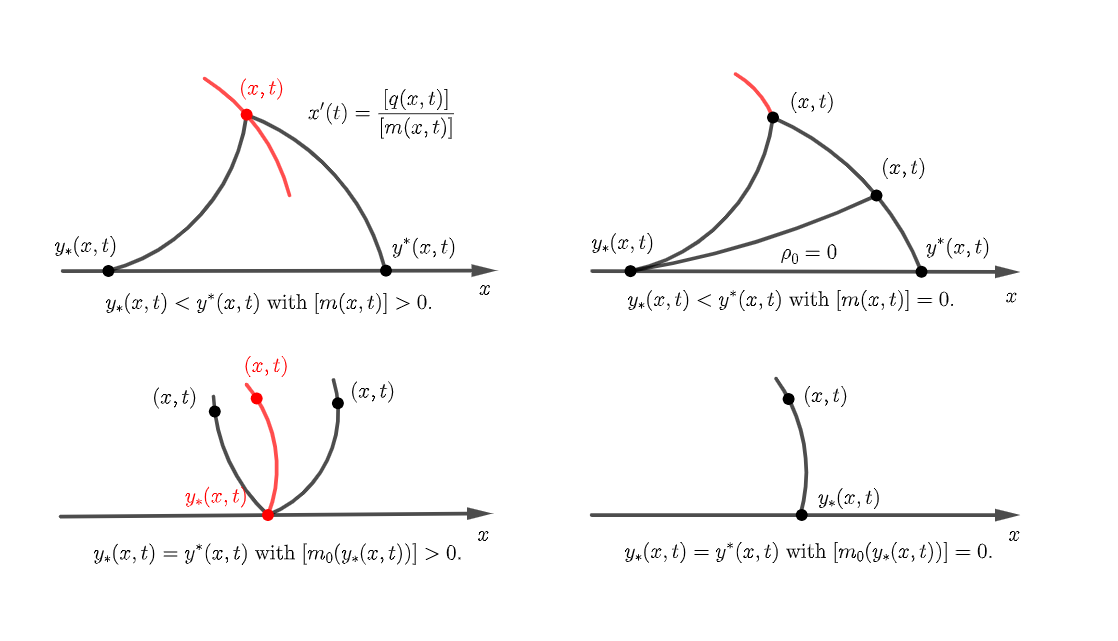}
    \caption{Different cases of points on forward generalized characteristics.}
    \label{char5678}
\end{figure}

\begin{proof}
Lemma $\ref{velocitylem}$ is proved case by case in the following five steps.

\smallskip
\noindent
{\bf 1.} For any line $t=t_1>t_0$, 
denote the $l_{y_*(x,t_1)}(x,t_1)$ as in 
$\eqref{lyxtm0}$--$\eqref{lyxt000}$ by $\xi:=X(t;x,t_1)$ with $x=X(t_1;x,t_1)$.
From \eqref{3.2}, for fixed $t$ and $t_1$, $X(t;x,t_1)$ is strictly increasing with respect to $x$.

Letting
\begin{equation}\label{at1bt1}
a(t_1)=\sup \{x\,:\, X(t_0;x,t_1)<x_0\},\qquad \,\,\,
b(t_1)=\inf \{x\,:\, X(t_0;x,t_1)>x_0\}.
\end{equation}
Then, $a(t_1) \leq b(t_1)$ holds. We claim:  $a(t_1)=b(t_1)$.
In fact, if $a(t_1)<b(t_1)$, then for any $x' \in (a(t_1),b(t_1))$,  
$$
X(t_0;x',t_1)=x_0,
$$ 
which contradicts the fact that 
any two different characteristic regions $\Delta(x,t_1)$ and $\Delta(x',t_1)$ with $x\neq x'$
never interact with each other on the upper half-plane. 

Therefore, for any $t_1>t_0$, there always exists a unique point $x=x(t_1)=a(t_1)$ on the line $t=t_1$. 
When time $t_1$ changes, the points $(x(t_1),t_1)$ form a curve $X(x_0,t_0)\,:\, x=x(t)$ with $x_0=x(t_0)$ for $t>t_0$.
In particular, for any $t_2>t_1>t_0$, 
the characteristic $\Delta(x,t_2)$ does not contain the points $(x(t_0),t_0)$ and $(x(t_1),t_1)$ if $x \neq x(t_2)$. 
This means that the curve $X(x_0,t_0)$ coincides with the curve $X(x(t_1),t_1)$ emitting from $(x(t_1),t_1)$ for $t\geq t_1$.
Therefore, there exists a unique forward generalized characteristic $X(x_0,t_0)$ of $(x_0,t_0)$.

\smallskip
\noindent
{\bf 2.} We now prove $\eqref{xprime2}$. 
Denote $y_*:=y_*(x,t)$ and $y^*:=y^*(x,t)$. 
By \eqref{meq1}, it follows from $[m(x,t)]>0$ that
\begin{equation}\label{[mxt]}
\int_{y_*{-}}^{y^*{+}}\,{\rm d}m_0(\eta)>0.
\end{equation}
Choosing a sequence of $(x_n,t')$ with $x_n>x(t')=a(t')$ such that 
$x_n \rightarrow x(t')$ as $n \rightarrow \infty$. 
From the definition of $b(t')$ in $\eqref{at1bt1}$, 
$y_*(x_n,t')\geq y^*$ so that 
$$y^*(x(t'),t')=\lim_{n \rightarrow \infty}y_*(x_n,t')\geq y^*.$$ 
Similarly, it is direct to check that $y_*(x(t'),t')\leq y_*$. 
This yields
\begin{equation}
 \label{Deltaeq}
 \Delta(x(t),t) \subset \Delta(x(t'),t')\qquad {\rm for\ any}\ t'>t.
\end{equation}

\vspace{2pt}
{\bf (a).} 
Let $x':=x(t')$ and $x^{\prime \prime}:=x(t'')$ with $t''>t'>t$, 
and denote
\begin{equation}\label{yyyy1}
y_{\prime}:=y_*(x',t'),\qquad y':=y^*(x',t'),\qquad
y_{\prime\prime}:=y_*(x'',t''),\qquad y'':=y^*(x'',t'').
\end{equation}
From $\eqref{Deltaeq}$, 
$\Delta(x,t) \subset \Delta(x',t') \subset \Delta(x'',t'')$ so that
\begin{equation}\label{yyyy2}
y_{\prime\prime} \leq y_{\prime} \leq y_* \leq y^* \leq y' \leq y''.
\end{equation}

Then, we claim: 
\begin{equation}\label{limyyyy}
\lim_{t''\rightarrow t{+}0}y_{\prime}=\lim_{t''\rightarrow t{+}0}y_{\prime\prime}=y_*,\qquad
\lim_{t''\rightarrow t{+}0}y'=\lim_{t''\rightarrow t{+}0}y''=y^*.   
\end{equation}
By Lemma $\ref{convergelem}$, 
all the upper- and lower-limits of $y_{\prime},y_{\prime\prime}$ and $y',y''$
are belonging to $S(x,t)\cap {\rm spt}\{\rho_0\}$, which, by $\eqref{y*eq1}$ and $\eqref{yyyy2}$, imply that
$\lim_{t'\rightarrow t{+}0} y_{\prime}=\lim_{t''\rightarrow t{+}0} y_{\prime\prime}=y_*$ and 
\begin{eqnarray}
&& y^*\leq \liminf\limits_{t'\rightarrow t{+}0} y'\leq \limsup\limits_{t'\rightarrow t{+}0} y'\leq \sup\big(S(x,t)\cap {\rm spt}\{\rho_0\}\big),\label{y*yY}\\
&& y^*\leq \liminf\limits_{t''\rightarrow t{+}0} y''\leq \limsup\limits_{t''\rightarrow t{+}0} y''\leq \sup\big(S(x,t)\cap {\rm spt}\{\rho_0\}\big).\label{y*yYY}
\end{eqnarray}
Denote $Y:=\sup\big(S(x,t)\cap {\rm spt}\{\rho_0\}\big)$. 
Without loss of generality, assume that $\nu(x,t)=F(Y;x,t)$ and 
$\nu(x{+}\varepsilon,t)=F(y_*(x{+}\varepsilon,t);x{+}\varepsilon,t)$ for $\varepsilon>0$.
Then, 
\begin{align*}
0&\leq F(Y;x{+}\varepsilon,t)-F(y_*(x{+}\varepsilon,t);x{+}\varepsilon,t)\\
&=F(Y;x,t)-F(y_*(x{+}\varepsilon,t);x,t)-\varepsilon\int_{y_*(x{+}\varepsilon,t){-}}^{Y{-}}\,{\rm d}m_0(\eta)
\leq -\varepsilon\int_{y_*(x{+}\varepsilon,t){-}}^{Y{-}}\,{\rm d}m_0(\eta),
\end{align*}
which, by $\eqref{y*eq1}$, implies
\begin{equation}\label{Yy*xt}
\int_{y^*(x,t){+}}^{Y{-}}\,{\rm d}m_0(\eta)\leq 0.
\end{equation}
According to $\eqref{y*yY}$--$\eqref{y*yYY}$ and $\eqref{Yy*xt}$, if $\eqref{limyyyy}$ does not hold for $y^*$, 
by choosing a subsequence, we can assume that 
$y^*< \lim_{t'\rightarrow t{+}0} y'=Y.$
Then, by Lemma $\ref{curvelem}$(i), for $(x{+}\varepsilon',t)\in l_{y'}(x',t')$ with $\varepsilon'\geq 0$, 
we have that $y^*(x{+}\varepsilon',t)\geq Y$ so that 
$y^*=\lim_{\varepsilon'\rightarrow 0{+}} y^*(x{+}\varepsilon',t)\geq Y$,
which contradicts to that $y^*<Y$. So $\eqref{limyyyy}$ also holds for $y^*$.

\vspace{2pt}
{\bf (b).} 
Without loss of generality of the chosen of the two cases in $\eqref{nueq}$, assume that
\begin{eqnarray}
&&\ \nu(x',t')\, =F(y';x',t')=F(y_{\prime};x',t'),\label{nuxp}\\[1mm]
&&\nu(x'',t'')=F(y'';x'',t'')
=F(y_{\prime\prime};x'',t'').\label{nuxp2}
\end{eqnarray}
This, by the definition of $\nu(x,t)$, implies 
\begin{equation}
F(y'';x'',t'')-F(y_{\prime};x'',t'') 
\leq F(y'';x',t')-F(y_{\prime};x',t')
\end{equation}
so that, by $\eqref{potentialF}$, 
\begin{align*}
&\int_{y_{\prime}{-}}^{y''{-}}
\eta{+}u_0(\eta)(\tau-\tau e^{-\frac{t''}{\tau}}){+}\widetilde{m}_0(\eta)(\tau^2-\tau^2e^{-\frac{\tau}{t''}}-\tau t''){-}x'' \,{\rm d}m_0(\eta)\\[1mm]
\leq &\int_{y_{\prime}{-}}^{y''{-}}
\eta{+} u_0(\eta)(\tau-\tau e^{-\frac{t'}{\tau}}){+}\widetilde{m}_0(\eta)(\tau^2-\tau^2e^{-\frac{\tau}{t'}}-\tau t'){-}x' \,{\rm d}m_0(\eta), 
\end{align*}
which, by \eqref{[mxt]} and a simple calculation, implies
\begin{equation}
 \label{loweq}
 \frac{x''-x'}{t''-t'} \geq \frac{\int_{y_{\prime}{-}}^{y''{-}}\tau u_0(\eta)\frac{e^{-\frac{t'}{\tau}}-e^{-\frac{t''}{\tau}}}{t''-t'}+\widetilde{m}_0(\eta) \frac{e^{-\frac{t'}{\tau}}-e^{-\frac{t''}{\tau}}}{t''-t'}\,{\rm d}m_0(\eta)}
 {\int_{y_{\prime}{-}}^{y''{-}} \,{\rm d}m_0(\eta)}.
\end{equation}
On the other hand, we have
\begin{equation*}
F(y_{\prime\prime};x'',t'')-F(y',x'',t'')
\leq F(y_{\prime\prime};x',t')-F(y';x',t'),
\end{equation*}
which, by \eqref{[mxt]} and a simple calculation, implies 
\begin{equation}\label{upeq}
\frac{x''-x'}{t''-t'} 
\leq \frac{\int_{y_{\prime\prime}{-}}^{y'{-}}
\tau u_0(\eta)\frac{e^{-\frac{t'}{\tau}}-e^{-\frac{t''}{\tau}}}{t''-t'}+\widetilde{m}_0(\eta) \frac{e^{-\frac{t'}{\tau}}-e^{-\frac{t''}{\tau}}}{t''-t'} \,{\rm d}m_0(\eta)}
{\int_{y_{\prime\prime}{-}}^{y'{-}} \,{\rm d}m_0(\eta)}.
\end{equation}
Then, from $\eqref{meq1}$--$\eqref{qeq1}$ and $\eqref{limyyyy}$, 
by letting $t'' \rightarrow t{+}0$ in $\eqref{loweq}$-$\eqref{upeq}$, we obtain
\begin{equation}\label{xprimetm0}
x'(t)=\frac{\int_{y_*(x,t){-}}^{y^*(x,t){+}}u_0(\eta) e^{-\frac{t}{\tau}}+ \widetilde{m}_0(\eta)(\tau e^{-\frac{t}{\tau}}-\tau) \,{\rm d}m_0(\eta)}
{\int_{y_*(x,t){-}}^{y^*(x,t){+}}\,{\rm d}m_0(\eta)}
=\frac{[q(x,t)]}{[m(x,t)]},
\end{equation}
where the chosen of $y_*(x,t){-}$ follows from the chosen of $\eqref{nuxp}$
by using $\eqref{nueq}$ and the fact, induced by Lemma $\ref{convergelem}$, that
$$
\nu(x,t)=\lim_{t' \rightarrow t{+}0}\nu(x',t')
=\lim_{t' \rightarrow t{+}0}F(y_\prime;x',t')=F(y_*(x,t);x,t);
$$
and similarly, the chosen of $y^*(x,t){+}$ follows from the chosen of $\eqref{nuxp2}$.

\vspace{2pt}
By the arbitrariness of the chosen of $\eqref{nuxp}$ and $\eqref{nuxp2}$ based on $\eqref{nueq}$, $\eqref{xprime2}$ follows.

\smallskip
\noindent
{\bf 3.} We now prove $\eqref{xprime4}$--$\eqref{xprime3}$. 
Suppose that $y_*(x,t)<y^*(x,t)$ and $[m(x,t)]=0$.

\smallskip
{\bf (a).} For $\eqref{xprime4}$, since $y_*(x{-}\epsilon,t)<y_*(x,t)$ for any $\epsilon>0$, 
similar to Step {\bf 2}, according to $\eqref{y*left}$--$\eqref{y*right}$, 
$\eqref{xprime4}$ follows from $\eqref{loweq}$--$\eqref{upeq}$ by letting $t''\rightarrow t{+}0$.

\smallskip
{\bf (b).} We now show \eqref{xprime3}. 
It follows from $\eqref{meq1}$ that
$$
\rho_0((y_*(x,t),y^*(x,t)])=0,
$$
which means that $(x,t)$ lies in $l_{y^*(x,t)}(x,t)$:
\begin{equation*}
x(\xi)=y^*(x,t)+ c(y^*(x,t);x,t)(\tau-\tau e^{-\frac{\xi}{\tau}})+ \widetilde{m}_0(y^*(x,t))(\tau^2-\tau^2 e^{-\frac{\xi}{\tau}}-\tau \xi).
\end{equation*}
Since $y_*(x{-}\epsilon,t)=y_*(x,t)$ for sufficiently small $\epsilon>0$, 
there exists $t_a>t$ such that $(x(t_a),t_a)$ lies in $l_{y^*(x,t)}(x,t)$, 
which implies
$$
x'(t)=\frac{x{-}y_*(x,t)}{\tau e^{\frac{t}{\tau}}-\tau }{+} \widetilde{m}_0(y_*(x,t))\big(\frac{t}{e^{\frac{t}{\tau}}{-}1}{-}\tau \big).
$$

\smallskip
\noindent
{\bf 4.} We now prove $\eqref{xprime1}$. There exist two cases:

\vspace{2pt}
{\bf (a).} If there exists $t_a>t$ such that $y_*(x(t_a),t_a)=y^*(x(t_a),t_a)$, from Lemma $\ref{convergelem}$, 
$$y_*(x(t_a),t_a)=y_*(x,t),$$
so that, 
if $\pm(c(y_*(x,t);x,t)-u_0(y_*(x,t)))>0$, 
by the definition of $a(t_1)$ in \eqref{at1bt1}, for any $\xi\in[t,t_a]$,
\begin{equation*}
x(\xi)=y_*(x,t)+ c(y^*(x,t){\pm};x,t)(\tau-\tau e^{-\frac{\xi}{\tau}})+ \widetilde{m}_0(y^*(x,t){\pm})(\tau^2-\tau^2 e^{-\frac{\xi}{\tau}}-\tau \xi),
\end{equation*}
which implies
\begin{equation*}\label{xprimet}
x'(t)=\frac{x{-}y_*(x,t)}{\tau e^{\frac{t}{\tau}}-\tau }{+} \widetilde{m}_0(y_*(x,t){\pm})\big(\frac{t}{e^{\frac{t}{\tau}}{-}1}{-}\tau \big).
\end{equation*}

Similarly, $\eqref{xprime1}$ holds
for the case that $c(y_*(x,t);x,t)-u_0(y_*(x,t))=0$.

\vspace{2pt}
{\bf (b).} If $y_*(x(t_a),t_a)<y^*(x(t_a),t_a)$ for any $t_a>t$, from Lemma \ref{convergelem}, 
\begin{equation}\label{y*tat}
\lim_{t_a \rightarrow t{+}0}y_*(x(t_a),t_a)=\lim_{t_a \rightarrow t{+}0}y^*(x(t_a),t_a)=y_*(x,t).
\end{equation}
This means that $l_{y_*(x,t)}(x,t)$ is given by
\begin{equation}\label{ly*tau}
x(\xi)=y^*(x,t)+ u_0(y^*(x,t))(\tau-\tau e^{-\frac{\xi}{\tau}})+ \widetilde{m}_0(y^*(x,t))(\tau^2-\tau^2 e^{-\frac{\xi}{\tau}}-\tau \xi)
\end{equation}
Let $x':=x(t')$ and $x^{\prime \prime}:=x(t'')$ with $t''>t'>t$, denote as \eqref{yyyy1}, by the same arguments as \eqref{nuxp}-\eqref{upeq}, \eqref{xprimetm0} holds so that 
$x'(t)=u_0(y_*(x,t)) e^{-\frac{t}{\tau}}+\widetilde{m}_0(y_*(x,t))(\tau e^{-\frac{t}{\tau}}-\tau)$,
which, by $c(y_*(x,t);x,t)=u_0(y_*(x,t))$, implies
$$
x'(t)=\frac{x{-}y_*(x,t)}{\tau e^{\frac{t}{\tau}}-\tau }{+} \widetilde{m}_0(y_*(x,t))\big(\frac{t}{e^{\frac{t}{\tau}}{-}1}{-}\tau \big)
$$

\smallskip
\noindent
{\bf 5.} We now prove $\eqref{xprime5}$. 
Suppose that $y_*(x,t)=y^*(x,t)$ and $[m_0(y_*(x,t))]=0$.
There exist two cases:

\smallskip
{\bf (a).} If there exists $t_a>t$ such that $y_*(x(t_a),t_a)=y^*(x(t_a),t_a)$, 
by the same arguments as {\bf(a)} in Step {\bf4},
it is direct to see that $\eqref{xprime5}$ holds.

\smallskip
{\bf (b).} If $y_*(x(t_a),t_a)<y^*(x(t_a),t_a)$ for any $t_a>t$,
let the backward characteristics 
$l_{y_*(x(t_a),t_a)}(x(t_a),t_a)$ and $l_{y^*(x(t_a),t_a)}(x(t_a),t_a)$ 
be expressed by $\bar{x}(\xi)$ and $\bar{\bar{x}}(\xi)$ for $\xi\in(0,t_a]$, respectively.
Then, it follows from
$\widetilde{m}_0(y_*(x,t){-})=\widetilde{m}_0(y_*(x,t))=\widetilde{m}_0(y_*(x,t){+})$ that, 
as $t_a \rightarrow t{+}0$,
\begin{equation}\label{lll}
l_{y_*(x(t_a),t_a)}(x(t_a),t_a),\quad l_{y^*(x(t_a),t_a)}(x(t_a),t_a)
\longrightarrow l_{y_*(x,t)(x,t)}. 
\end{equation}
Since $\Delta(x(t),t)\subset \Delta(x(t_a),t_a)$, then
$\bar{x}(t')<x(t')<\bar{\bar{x}}(t')$ for $t'\in [t,t_a)$ so that
$$
\frac{\bar{x}(t_a)-\bar{x}(t')}{t_a-t'}
\le \frac{x(t_a)-x(t')}{t_a-t'}
\le \frac{\bar{\bar{x}}(t_a)-\bar{\bar{x}}(t')}{t_a-t'},
$$
which, by $\eqref{lll}$ and letting $t_a \rightarrow t{+}0$, implies 
$$
x'(t)=\frac{x{-}y_*(x,t)}{\tau e^{\frac{t}{\tau}}-\tau }{+} \widetilde{m}_0(y_*(x,t))\big(\frac{t}{e^{\frac{t}{\tau}}{-}1}{-}\tau \big).
$$

\smallskip
Up to now, we have completed the proof of Lemma $\ref{velocitylem}$.
\end{proof}

\noindent
{\it Definition of the velocity $u(x,t)$.}
By using the forward generalized characteristics in Lemma $\ref{velocitylem}$ and adjusting the  velocity in vacuum region caused by initial vacuum,
we define the velocity $u(x,t)$ as
\begin{equation}\label{ueq1}
u(x,t)=
\begin{cases}
U_0 e^{-\frac{t}{\tau}}{+}\widetilde{m}_0(y_*(x,t){+})(\tau e^{-\frac{t}{\tau}}{-}\tau)  \quad 
&{\rm if} \ c(y_*(x,t);x,t)> u_0(y_*(x,t))\ \\ 
&\quad {\rm and}\ c(y_*(x,t){+};x,t)> U_0, \\[1mm]
{-}U_0 e^{-\frac{t}{\tau}}{+}\widetilde{m}_0(y_*(x,t){-})(\tau e^{-\frac{t}{\tau}}{-}\tau) \quad
&{\rm if} \ c(y_*(x,t);x,t)<u_0(y_*(x,t))\ \\
&\quad {\rm and}\ c(y_*(x,t){-};x,t)< {-}U_0,\\[1mm]
\bar{x}'(t)  \quad &\text{otherwise.} 
\end{cases}
\end{equation}
where the curve $\bar{x}(\xi)$ is the forward generalized characteristic emitting from $(x,t)$.

\smallskip
By Lemma $\ref{velocitylem}$, 
$u(x,t)$ is well defined almost everywhere in $\mathbb{R}\times\mathbb{R}^+$. 
The properties of $u(x,t)$ are presented in the following lemma.

\begin{Lem} \label{lem:uuu}
Let $u(x,t)$ and $y_*(x,t)$ be given by $\eqref{ueq1}$ and $\eqref{y*eq1}$, respectively.

\vspace{2pt}
\noindent
\ {\rm(i)} For $(x,t)\in\mathbb{R}\times\mathbb{R}^+$ with $[m_0(y_*(x{\pm},t))]>0$,
the following statements hold{\rm:}
\vspace{1pt}
\begin{itemize}
\item [(a)] If $c(y_*(x{\pm},t);x,t)=u_0(y_*(x{\pm},t))$,
\begin{equation}\label{u1}
u(x{\pm},t)=\frac{x-y_*(x{\pm},t)}{\tau e^{\frac{t}{\tau}}{-}\tau}{+} \widetilde{m}_0(y_*(x{\pm},t){\pm})(\frac{t}{e^{\frac{t}{\tau}}{-}1}{-}\tau);
\end{equation}

\item [(b)] If $c(y_*(x{\pm},t);x,t)>u_0(y_*(x{\pm},t))$, 
\begin{equation}\label{u2}
\quad u(x{\pm},t)\!=\!
\begin{cases}
\frac{x-y_*(x{\pm},t)}{\tau e^{\frac{t}{\tau}}-\tau}{+} \widetilde{m}_0(y_*(x{\pm},t){+})(\frac{t}{e^{\frac{t}{\tau}}{-}1}{-}\tau)
 &{\rm if}\, c(y_*(x{\pm},t){+};x,t)\leq U_0,\\[2mm]
U_0 e^{-\frac{t}{\tau}}{+}\widetilde{m}_0(y_*(x{\pm},t){+})(\tau e^{-\frac{t}{\tau}}{-}\tau), 
 &{\rm if}\, c(y_*(x{\pm},t){+};x,t)> U_0.
\end{cases}
\end{equation}

\item[(c)]  If $c(y_*(x{\pm},t);x,t)<u_0(y_*(x{\pm},t))$, 
\begin{equation}\label{u3}
\quad\ u(x{\pm},t)\!=\!
\begin{cases}
\frac{x-y_*(x{\pm}0,t)}{\tau e^{\frac{t}{\tau}}{-}\tau}{+} \widetilde{m}_0(y_*(x{\pm},t){-})(\frac{t}{e^{\frac{t}{\tau}}{-}1}{-}\tau)
&{\rm if}\, c(y_*(x{\pm},t){-};x,t){\geq} {-}U_0,\\[2mm]
{-}U_0 e^{-\frac{t}{\tau}}{+} \widetilde{m}_0(y_*(x{\pm},t){-})(\tau e^{-\frac{t}{\tau}}{-}\tau), 
&{\rm if}\, c(y_*(x{\pm},t){-};x,t){<}{-} U_0.
\end{cases}
\end{equation}
\end{itemize}

\noindent
\ {\rm(ii)} For $(x,t)\in\mathbb{R}\times\mathbb{R}^+$ with $[m_0(y_*(x{\pm},t))]=0$,
\begin{equation}\label{u4}
\quad u(x{\pm},t)\!=\!
\begin{cases}
\frac{x-y_*(x{\pm},t)}{\tau e^{\frac{t}{\tau}}{-}\tau}{+} \widetilde{m}_0(y_*(x{\pm},t))(\frac{t}{e^{\frac{t}{\tau}}{-}1}{-}\tau)
\quad &{\rm if}\, |c(y_*(x{\pm},t),x,t)|\leq U_0, \\[2mm]
U_0 e^{-\frac{t}{\tau}}{+}\widetilde{m}_0(y_*(x{\pm},t)) (\tau e^{-\frac{t}{\tau}}{-}\tau)
\quad &{\rm if}\, c(y_*(x{\pm},t);x,t)>U_0,\\[2mm]
{-}U_0 e^{-\frac{t}{\tau}}{+}\widetilde{m}_0(y_*(x{\pm},t))(\tau e^{-\frac{t}{\tau}}{-}\tau)
\quad &{\rm if}\, c(y_*(x{\pm},t);x,t){<}{-}U_0.
\end{cases}
\end{equation}

\noindent
\ Furthermore, the velocity $u(x,t)$ satisfies the following two properties{\rm:}
\begin{itemize}
\item[(i)]  For {\it a.e.} $(x,t)\in\mathbb{R}\times\mathbb{R}^+$,
\begin{equation}\label{uuupm}
u(x{+},t)\leq u(x,t) \leq u(x{-},t).
\end{equation}

\item[(ii)] For any $x_1<x_2$ and {\it a.e.} $t>0$
the Oleinik entropy condition $\eqref{ux2ux1}$ holds, {\it i.e.,}
$$\frac{u(x_2,t)-u(x_1,t)}{x_2-x_1}\leq \frac{1}{t}.
$$
\end{itemize}
\end{Lem}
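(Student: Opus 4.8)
The plan is to read off the boundary velocities $u(x\pm,t)$ directly from the explicit characteristic-speed formulas in Lemma \ref{velocitylem}, combined with the velocity definition \eqref{ueq1}, and then to extract the two monotonicity properties \eqref{uuupm} and \eqref{ux2ux1} from the geometry of the backward characteristics. First I would establish parts (i) and (ii) of the velocity formulas. For a point $(x,t)$ with $y_*(x\pm,t)=y^*(x\pm,t)$, the forward characteristic emitting from $(x\pm,t)$ is a single curve, and Lemma \ref{velocitylem}(ii) gives $x'(t)$ explicitly as either $\frac{x-y_*(x\pm,t)}{\tau e^{t/\tau}-\tau}+\widetilde{m}_0(y_*(x\pm,t)\pm)(\frac{t}{e^{t/\tau}-1}-\tau)$ (when $[m_0(y_*)]>0$ with a definite sign of $c-u_0$) or the neutral version with $\widetilde{m}_0(y_*(x\pm,t))$. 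I would then match each algebraic sign condition $\pm(c(y_*;x,t)-u_0(y_*))>0$ against the three branches of \eqref{ueq1}, using the identity, obtainable by substituting $c(y;x,t)$ from \eqref{xytctm} into the characteristic formula, that $c(y;x,t)=u_0(y)$ forces $x'(t)=u_0(y)e^{-t/\tau}+\widetilde{m}_0(y)(\tau e^{-t/\tau}-\tau)$, which is exactly the ``otherwise'' branch value. The velocity cutoff at $\pm U_0$ in \eqref{ueq1} produces precisely the case split in \eqref{u2}--\eqref{u4} according to whether $c(y_*\pm;x,t)$ exceeds $U_0$ in modulus; this is bookkeeping once the sign dictionary is fixed.

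The substantive content is the monotonicity \eqref{uuupm} and the Oleinik inequality \eqref{ux2ux1}. For \eqref{uuupm} I would argue that $u(x-,t)$, $u(x,t)$, $u(x+,t)$ are the initial speeds (suitably capped) of the three backward characteristics determined by $y_*(x-,t)\le y_*(x,t)\le y_*(x+,t)$, which are ordered by \eqref{geoentropy} together with \eqref{y*left}--\eqref{y*right}. Since the characteristic speed $c(y;x,t)$ in \eqref{xytctm} is a decreasing function of $y$ for fixed $(x,t)$ (the coefficient $-(\tau-\frac{t}{1-e^{-t/\tau}})$ multiplying $\widetilde{m}_0$ and the $-\frac{y}{\tau-\tau e^{-t/\tau}}$ term both decrease), larger backward foot points carry smaller velocities, giving $u(x+,t)\le u(x,t)\le u(x-,t)$; the $\pm U_0$ truncation preserves this order because it is a monotone clipping.

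For the Oleinik condition \eqref{ux2ux1} I would take $x_1<x_2$ and compare the forward characteristics through the two points, whose velocities are built from the explicit formulas already derived. The key algebraic fact is that the slope $x'(t)$ as a function of the foot point $y$, for fixed $(x,t)$, yields a Lipschitz-in-$x$ bound: writing $u(x_2,t)-u(x_1,t)$ in terms of the difference of the explicit expressions and using that $\frac{x-y_*(x,t)}{\tau e^{t/\tau}-\tau}$ contributes a factor $\frac{1}{\tau e^{t/\tau}-\tau}\le \frac{1}{t}$ (since $\tau e^{t/\tau}-\tau\ge t$ for all $\tau>0,\ t>0$, by convexity of the exponential), while the $\widetilde{m}_0$ term only helps because $\widetilde{m}_0$ is nondecreasing and multiplies $\frac{t}{e^{t/\tau}-1}-\tau\le 0$. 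I expect the main obstacle to be the rigorous treatment of the case $y_*(x_1,t)<y_*(x_2,t)$ where the two points sit on genuinely distinct characteristics with possibly different truncation branches: here one must verify that the worst case for the difference quotient is the untruncated-untruncated configuration and that the inequality $\tau e^{t/\tau}-\tau\ge t$ dominates all contributions uniformly in $\tau\in(0,1]$. I would handle this by reducing to the elementary inequality $e^{s}\ge 1+s$ applied with $s=t/\tau$, which gives the clean bound $1/(\tau e^{t/\tau}-\tau)\le 1/t$ and closes the estimate.
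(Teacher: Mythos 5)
Your overall skeleton --- read off the one-sided values from Lemma \ref{velocitylem} and \eqref{ueq1}, then derive \eqref{uuupm} and the Oleinik bound from the explicit formulas, closing with $\tau(e^{t/\tau}-1)\ge t$ (i.e.\ $e^s\ge 1+s$) --- does match the paper's route, and your elementary inequality is exactly the paper's final step in the Oleinik estimate. But two of your key steps fail as stated. First, the monotonicity claim underlying your proof of \eqref{uuupm} has the wrong sign: in \eqref{xytctm} the coefficient of $\widetilde{m}_0(y)$ is $-\big(\tau-\frac{t}{1-e^{-t/\tau}}\big)=\frac{t}{1-e^{-t/\tau}}-\tau\ge 0$ (since $\tau(1-e^{-t/\tau})\le t$), so the $\widetilde{m}_0$-term is \emph{nondecreasing} in $y$ and $c(\cdot\,;x,t)$ is not a decreasing function of $y$. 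The quantity that is monotone in the foot point is the time-$t$ speed $\frac{x-y}{\tau e^{t/\tau}-\tau}+\widetilde{m}_0(y)\big(\frac{t}{e^{t/\tau}-1}-\tau\big)$, whose $\widetilde{m}_0$-coefficient is nonpositive; that part of your argument is repairable. Second, and more seriously, even the corrected monotonicity only compares speeds of backward characteristics through $(x,t)$, so it cannot yield the middle inequality $u(x{+},t)\le u(x,t)\le u(x{-},t)$ at a $\delta$-shock: there, by \eqref{xprime2}, $u(x,t)=[q]/[m]$ is a mass-weighted average of $u_0(\eta)e^{-t/\tau}+\widetilde{m}_0(\eta)(\tau e^{-t/\tau}-\tau)$ over $(y_*,y^*)$, and $u_0$ is not monotone, so the average is not ordered against the one-sided characteristic speeds by any pointwise comparison. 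The paper handles this case by a genuinely different mechanism, absent from your proposal: it compares difference quotients of the forward generalized characteristic $\bar{x}(\xi)$ emitting from $(x,t)$ with those of the backward characteristics $l_{y_*(\bar{x}(t''),t'')}(\bar{x}(t''),t'')$ of later points on it, using the nesting \eqref{Deltaeq} and the limits \eqref{limyyyy}.

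Two further points. The formulas in (i)--(ii) are not mere bookkeeping: since \eqref{ueq1} defines $u$ pointwise, you must prove the one-sided limits $u(x{\pm},t)$ exist and identify them; the paper does this by observing that, at fixed $t$, the shock set $\{x: y_*(x,t)<y^*(x,t)\}$ is at most countable by \eqref{geoentropy}, choosing $x_n\to x$ with $y_*(x_n,t)=y^*(x_n,t)$, applying \eqref{xprime1} and \eqref{xprime5}, and invoking the one-sided continuity \eqref{y*left}--\eqref{y*right} to pass to the limit in $\widetilde{m}_0(y_*(x_n,t){\pm})$. In the Oleinik step you must likewise first replace $u(x_2,t)$ and $u(x_1,t)$ by $u(x_2{-},t)$ and $u(x_1{+},t)$ via \eqref{uuupm} before any explicit-formula computation applies, since shock values are averages rather than characteristic speeds --- so the Oleinik proof genuinely depends on \eqref{uuupm}, not just on algebra. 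Finally, your assertion that the worst case is the untruncated--untruncated configuration is not correct in crossed configurations: e.g.\ if $u(x_1,t)$ is capped at $U_0 e^{-t/\tau}+\widetilde{m}_0(y_*(x_1,t){+})(\tau e^{-t/\tau}-\tau)$ because $c(y_*(x_1,t){+};x_1,t)>U_0$, the truncation \emph{lowers} $u(x_1,t)$ and so \emph{enlarges} the difference quotient; this case needs a separate (easy) estimate using $c_2\le U_0$, the monotonicity of $\widetilde{m}_0$, and $\tau e^{-t/\tau}-\tau\le 0$, which makes the quotient nonpositive outright.
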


\begin{proof}
The proof is divided into three steps.

\smallskip
\noindent
{\bf 1.} For (i), we only prove \eqref{u1}, 
since it is similar for \eqref{u2}, \eqref{u3}, and \eqref{u4}. 

It follows from \eqref{geoentropy} that there exists at most countable points $x$ satisfying $y_*(x,t)<y^*(x,t)$ for fixed $t$, so that we can choose a sequence $x_n<x$ with $y_*(x_n,t)=y^*(x_n,t)$.
Since $c(y_*(x{-},t);x,t)=u_0(y_*(x{-},t))$, then $c(y_*(x_n,t),x_n,t)<u_0(y_*(x{-},t))$ and we have $\widetilde{m}_0(y_*(x_n,t){\pm})\rightarrow \widetilde{m}_0(y_*(x{-},t){-})$ as $x_n\rightarrow x$.
From \eqref{xprime1} and \eqref{xprime5}, 
\begin{align*}
u(x{-},t)=\lim_{x_n\rightarrow x}u(x_n,t)=\frac{x-y_*(x{-},t)}{\tau e^{\frac{t}{\tau}}{-}\tau}{+} \widetilde{m}_0(y_*(x{-},t){-})(\frac{t}{e^{\frac{t}{\tau}}{-}1}{-}\tau).
\end{align*}
The formula of $u(x{+},t)$ can be proved similarly, then \eqref{u1} holds.

\smallskip
\noindent
{\bf 2.} Now we show that \eqref{uuupm} holds, and we only prove $u(x{-},t)\geq u(x,t)$ since it is similar for $u(x{+},t)\leq u(x,t)$. If $y_*(x,t)=y^*(x,t)$, for the case \eqref{u1} we have
\begin{align*}
u(x,t)=\frac{x-y_*(x{-},t)}{\tau e^{\frac{t}{\tau}}{-}\tau}{+} \widetilde{m}_0(y_*(x{-},t))(\frac{t}{e^{\frac{t}{\tau}}{-}1}{-}\tau)
\leq u(x{-},t),
\end{align*}
where we exploit the fact that $\widetilde{m}_0(\eta)$ is increasing and $y_*(x{-},t)=y_*(x,t)$.
The case \eqref{u2}, \eqref{u3}, and \eqref{u4} are similar.

If $y_*(x,t)<y^*(x,t)$, assume that 
the forward generalized characteristic emitting from $(x,t)$ is $\gamma=\bar{x}(\xi)$ with $\xi\geq t$.
For $t''>t'>t$, let curve $l_{y_*(\bar{x}(t''),t'')}(\bar{x}(t''),t'')$ 
be given by $\gamma=\bar{\bar{x}}(\xi)$ with $\xi\in[0,t'']$, then from \eqref{Deltaeq} we know $\bar{\bar{x}}(t')\leq \bar{x}(t')$.
It follows from \eqref{limyyyy} that 
\begin{align*}
\lim_{t'' \rightarrow t{+}0}\frac{\bar{\bar{x}}(t'')-\bar{\bar{x}}(t')}{t''-t'}=u(x{-},t),
\end{align*}
which implies
\begin{align*}
u(x,t)=\lim_{t'' \rightarrow t{+}0}\frac{\bar{x}(t'')-\bar{x}(t')}{t''-t'}\le 
\lim_{t'' \rightarrow t{+}0}\frac{\bar{\bar{x}}(t'')-\bar{\bar{x}}(t')}{t''-t'}=u(x{-},t).
\end{align*}

\smallskip
\noindent

{\bf 3.} We now prove the Oleinik entropy condition. For fixed $t$ and $x_1<x_2$, without loss of generality,
there exist $y_1\leq y_2$ and $c_1, c_2$ with $\widetilde{m}_0(y_1)\leq \widetilde{m}_0(y_2)$ and $y_i\in S(x_i,t)$, such that
$$
x_i=y_i+c_i(\tau-\tau e^{-\frac{t}{\tau}})+\widetilde{m}_0(y_i)(\tau^2-\tau^2 e^{-\frac{t}{\tau}}-\tau t), \qquad {\rm for}\ i=1, 2;
$$
and 
$$
u(x_2{-},t)=c_2 e^{-\frac{t}{\tau}}+\widetilde{m}_0(y_2)(\tau e^{-\frac{t}{\tau}}-\tau),
\quad u(x_1{+},t)=c_1 e^{-\frac{t}{\tau}}+\widetilde{m}_0(y_1)(\tau e^{-\frac{t}{\tau}}-\tau).
$$
From \eqref{uuupm},
\begin{align*}
&\frac{u(x_2,t)-u(x_1,t)}{x_2-x_1} 
\leq \frac{u(x_2{-},t)-u(x_1{+},t)}{x_2-x_1}\nonumber\\[1mm]
&=\frac{(c_2-c_1)e^{\frac{t}{\tau}}+(\widetilde{m}_0(y_2)-\widetilde{m}_0(y_1))(\tau e^{-\frac{t}{\tau}}-\tau)}
{(y_2-y_1)+(c_2-c_1)(\tau-\tau e^{-\frac{t}{\tau}})+(\widetilde{m}_0(y_2)-\widetilde{m}_0(y_1))(\tau^2-\tau^2 e^{-\frac{t}{\tau}}-\tau t)}\nonumber\\[1mm]
&\leq \frac{e^{-\frac{t}{\tau}}}{\tau(1-e^{-\frac{t}{\tau}})}
\frac{(c_2-c_1)+(\widetilde{m}_0(y_2)-\widetilde{m}_0(y_1))(\tau-\tau e^{\frac{t}{\tau}})}
{(c_2-c_1)+(\widetilde{m}_0(y_2)-\widetilde{m}_0(y_1))(\tau-\frac{t}{1-e^{-\frac{t}{\tau}}})}\nonumber\\[1mm]
&\leq \frac{e^{-\frac{t}{\tau}}}{\tau(1-e^{-\frac{t}{\tau}})} \leq \frac{1}{t}.
\end{align*}

\end{proof}

By using mass $m(x,t)$ and velocity $u(x,t)$, 
we now construct the energy.
To do this, 
we need to introduce the forward generalized characteristic emitting from the $x$--axis.

For each point $(\eta,0)$, there exists at least one forward characteristic $x(t)$ with $x(0)=\eta$. 
There exist three cases:
\begin{itemize}
\item [(i)] If $\eta \in {\rm spt}\{\rho_0\}$ with $[m_0(\eta)]=0$, or $[m_0(\eta)]>0$ and 
there exist only one forward generalized characteristic emitting from $(\eta,0)$, 
we can arbitrarily choose one of them and denote it by $x(\eta,t)$.

\vspace{2pt}
\item [(ii)] If $\eta \in {\rm spt}\{\rho_0\}$ with $[m_0(\eta)]>0$ and 
there exist more one forward generalized characteristic emitting from $(\eta,0)$,
denote
$$
X(\eta,t)=\eta+u_0(\eta)(\tau-\tau e^{-\frac{t}{\tau}})+\widetilde{m}_0(\eta)(\tau^2-\tau^2 e^{-\frac{t}{\tau}}-\tau t),
$$
we let
\begin{equation}\label{xetaeq1}
x(\eta,t):=
\begin{cases}
a(\eta,t)  &{\rm if} \ X(\eta,t)  \leq a(\eta,t),\\[1mm]
X(\eta,t)   &{\rm if} \ a(\eta,t)\leq X(\eta,t) \leq b(\eta,t),\\[1mm]
b(\eta,t) &{\rm if} \ X(\eta,t) \geq b(\eta,t).
\end{cases}
\end{equation}
Here, $a(\eta,t)$ and $b(\eta,t)$ are given by
\begin{equation}\label{aetat}
 a(\eta,t):=
\begin{cases}
X(\eta,t) &\quad {\rm if} \ N_1 \cup N_2=\varnothing,\\[1mm]
\ \inf\{x\,:\, x \in N_2\} &\quad {\rm if} \ N_1=\varnothing,N_2 \neq \varnothing,\\[1mm]
\sup \{x\,:\, x \in N_1\} &\quad {\rm if} \ N_1\neq \varnothing,
\end{cases}
\end{equation}

\begin{equation}\label{betat}
\ \ b(\eta,t):=
\begin{cases}
X(\eta,t) &\quad {\rm if} \ N_1 \cup N_2=\varnothing,\\[1mm]
\sup\{x\,:\, x \in N_1\}  &\quad {\rm if}\ N_1\neq \varnothing,N_2 = \varnothing,\\[1mm]
\ \inf \{x\,:\, x \in N_2\} &\quad {\rm if} \ N_2\neq \varnothing,
\end{cases}
\end{equation}
where $N_1=\{x\,:\, y_*(x,t)<\eta \}$ and $N_2=\{x\,:\, y_*(x,t)>\eta \}$. 

\vspace{2pt}
\item [(iii)] If $\eta \in \mathbb{R}/{\rm spt}\{\rho_0\}=\cup_n(a_n,b_n)$, denote $b_-=\inf{\rm spt}\{\rho_0\}$. 
Since $ \widetilde{m}_0({-}\infty)=-\frac{M}{2}$, 
we let
\begin{align*}
\bar{x}(\eta,t):=
\begin{cases}
\eta{+}U_0(\tau{-}\tau e^{-\frac{t}{\tau}}) 
{+}\widetilde{m}_0(\eta)(\tau^2{-}\tau^2 e^{-\frac{t}{\tau}}{-}\tau t)  &\text{if $\eta \in (a_n,b_n)$ for $a_n>{-}\infty$},\\[2mm]
\eta{-}U_0(\tau{-}\tau e^{-\frac{t}{\tau}}) 
{+}\widetilde{m}_0(\eta)(\tau^2{-}\tau^2 e^{-\frac{t}{\tau}}{-}\tau t)  &\text{if $\eta \in ({-}\infty,b_-)$ }.
\end{cases}
\end{align*}
Then, there exists $t_{\eta}>0$ such that 
$\bar{x}(\eta,t)$ does not interact with 
other curve $x(\eta',t)$ on $(0,t_{\eta})$ for any $\eta' \in {\rm spt}\{\rho_0\}$. 
Denote $\bar{t}_{\eta}$ be the first time when $\bar{x}(\eta,t)$ interacts with 
curve $x(\bar{\eta},t)$ for some $\bar{\eta}\in {\rm spt}\{\rho_0\}$, and let
\begin{equation}\label{xetaeq2}
x(\eta,t):=
\begin{cases}
\bar{x}(\eta,t)&\quad {\rm if}\ 0<t< \bar{t}_{\eta},\\[1mm]
x(\bar{\eta},t) &\quad {\rm if}\ t\geq\bar{t}_{\eta}.
\end{cases}
\end{equation}
\end{itemize}

Then, it is easy to see that $x(\eta,t)$ is increasing in $\eta$ and continuous in $t>0$. 
Thus, $x=x(\eta,t)$ is well defined with respect to the measure $\rho_0$,  
and it follows from Lemma $\ref{velocitylem}$ and $\eqref{ueq1}$ that,
for any $\eta\in\mathbb{R}$ and {\it a.e.} $t>0$, 
\begin{equation}\label{xetat}
\frac{\partial x(\eta,t)}{\partial t}
:=\lim_{t^{\prime \prime},t'\rightarrow t{+}0}
\frac{x(\eta,t'')-x(\eta,t')}{t''-t'}=u(x(\eta,t),t).
\end{equation}

\noindent
{\bf Definition of the energy $E(x,t)$.}
By using the forward generalized characteristics $x(\eta,t)$, 
we define the energy $E(x,t)$ as
\begin{align}\label{Eeq1}
&E(x,t)\!:=\! \nonumber\\
&
\begin{cases}
\int_{{-}\infty}^{y_*(x,t){-}}
\big(u_0(\eta) e^{-\frac{t}{\tau}}+ \widetilde{m}_0(\eta)(\tau e^{-\frac{t}{\tau}}-\tau)\big)
u(x(\eta,t),t) \, {\rm d}m_0(\eta) 
&\! {\rm if}\,  \nu(x,t)\!=\!F(y_*(x,t),x,t),\\[2mm]
\int_{{-}\infty}^{y_*(x,t){+}}
\big(u_0(\eta) e^{-\frac{t}{\tau}}+ \widetilde{m}_0(\eta)(\tau e^{-\frac{t}{\tau}}-\tau)\big)
u(x(\eta,t),t) \, {\rm d}m_0(\eta) 
&\! {\rm otherwise}.
\end{cases}
\end{align}

Noticing that $u(x(\eta,t),t)$ is measurable to the measure $\rho_0$ and $m(x,t)$ is of bounded variation
in $x$, we have the following lemma on the relations of these functions.

\begin{Lem}\label{RNlem}
Let $m(x,t)$ and $u(x,t)$ be given by $\eqref{meq1}$ and $\eqref{ueq1}$, respectively. 
Then, $q(x,t)$ and $E(x,t)$ defined by $\eqref{qeq1}$ and $\eqref{Eeq1}$ satisfy that, for {\it a.e.} $t>0$,
\begin{equation}\label{qeum}
q_x=um_x,\qquad E_x=u^2m_x
\end{equation}
in the sense of Radon--Nikodym derivatives. 
\end{Lem}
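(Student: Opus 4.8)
The plan is to read both identities as statements about measures in $x$ (for a.e.\ fixed $t$), namely ${\rm d}q=u\,{\rm d}m$ and ${\rm d}E=u^2\,{\rm d}m$, and to prove them through the forward flow $\eta\mapsto x(\eta,t)$. First I would record that the integrand appearing in \eqref{qeq1} and \eqref{Eeq1}, namely $v(\eta,t):=u_0(\eta)e^{-t/\tau}+\widetilde m_0(\eta)(\tau e^{-t/\tau}-\tau)$, is exactly the velocity at time $t$ of the characteristic issued from $(\eta,0)$ with speed $u_0(\eta)$; indeed it solves the ODE $\dot v=-\widetilde m_0(\eta)-v/\tau$ with $v|_{t=0}=u_0(\eta)$. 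Thus $q(x,t)=\int_{-\infty}^{y_*(x,t)\pm}v(\eta,t)\,{\rm d}m_0(\eta)$ and $E(x,t)=\int_{-\infty}^{y_*(x,t)\pm}v(\eta,t)\,u(x(\eta,t),t)\,{\rm d}m_0(\eta)$, while $m(x,t)=\int_{-\infty}^{y_*(x,t)\pm}{\rm d}m_0(\eta)$. Since $y_*(\cdot,t)$ is nondecreasing by \eqref{geoentropy}, for $x_1<x_2$ the increments of $q$ and $m$ are integrals of $v\,{\rm d}m_0$ and ${\rm d}m_0$ over the corresponding $\eta$-interval; in other words ${\rm d}m$, ${\rm d}q$, ${\rm d}E$ are the pushforwards of ${\rm d}m_0$, $v\,{\rm d}m_0$, $v\,u(x(\cdot,t),t)\,{\rm d}m_0$ under $\eta\mapsto x(\eta,t)$. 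In particular ${\rm d}q,{\rm d}E\ll{\rm d}m$, so only the values of the Radon--Nikodym derivatives remain to be identified.

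Next I would split ${\rm d}m$ into its at-most-countable atomic part and its continuous part. An atom sits at a point $x$ with $[m(x,t)]>0$, handled by the jump/characteristic formulas of Lemma \ref{velocitylem}. The representative case is the $\delta$-shock $y_*(x,t)<y^*(x,t)$ enclosing positive $m_0$-mass: there all $\eta\in(y_*(x,t),y^*(x,t))$ have collided into $x$, so $x(\eta,t)=x$ and $u(x(\eta,t),t)=u(x,t)$ is constant over the shock interval. Hence $[q(x,t)]=\int v\,{\rm d}m_0$ and $[m(x,t)]=\int{\rm d}m_0$ over this interval, and Lemma \ref{velocitylem}(i)(a), formula \eqref{xprime2}, gives $u(x,t)=\bar x'(t)=[q(x,t)]/[m(x,t)]$. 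This is precisely $[q]=u[m]$; and then $[E(x,t)]=u(x,t)\int v\,{\rm d}m_0=u(x,t)[q(x,t)]=u(x,t)^2[m(x,t)]$, matching $u^2\,{\rm d}m$. The single-atom case $y_*(x,t)=y^*(x,t)$ with $[m_0(y_*)]>0$ is identical, now using \eqref{xprime1} and $u(x,t)=v(y_*(x,t),t)$ with the consistent $\widetilde m_0(y_*\pm)$.

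On the continuous part I would use the Lebesgue differentiation theorem for the measure ${\rm d}m$. For ${\rm d}m$-a.e.\ such $x$ the derivative equals $\lim_{\epsilon\to0+}\big(q(x{+}\epsilon)-q(x{-}\epsilon)\big)/\big(m(x{+}\epsilon)-m(x{-}\epsilon)\big)$, which by the pushforward is the $m_0$-average of $v(\cdot,t)$ over $\big(y_*(x{-}\epsilon,t),y_*(x{+}\epsilon,t)\big)$. By \eqref{y*left}--\eqref{y*right} this interval shrinks to the single point $y_0:=y_*(x,t)=y^*(x,t)$, so for $m_0$-a.e.\ $y_0$ the average converges to $v(y_0,t)$. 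It then remains to verify the algebraic identity $v(y_0,t)=u(x,t)$: substituting $c(y_0;x,t)$ from \eqref{xytctm} (the initial speed along the characteristic reaching $(x,t)$) into $v(y_0,t)=c(y_0;x,t)e^{-t/\tau}+\widetilde m_0(y_0)(\tau e^{-t/\tau}-\tau)$ reproduces exactly the right-hand side of \eqref{xprime5}, i.e.\ the value of $u(x,t)$ at a non-shock point. The vacuum-cap branches of \eqref{ueq1} occur only at edges of vacuum intervals, which carry no ${\rm d}m$-mass and are negligible $m$-a.e. The same averaging applied to the weight $v\,u(x(\cdot,t),t)$, together with continuity of $\eta\mapsto u(x(\eta,t),t)$ at non-shock points (cf.\ \eqref{xetat}), yields ${\rm d}E/{\rm d}m=v(y_0,t)\,u(x,t)=u(x,t)^2$.

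The main obstacle is the continuous-part differentiation rather than the atomic computation: one must justify, via a Lebesgue/Besicovitch differentiation argument for the possibly non-doubling measure $m_0$, that the shrinking $m_0$-averages of $v(\cdot,t)$ and of $v\,u(x(\cdot,t),t)$ converge to their pointwise values for $m_0$-a.e.\ $y_0$, and one must keep the $\pm$ conventions in \eqref{meq1}--\eqref{Eeq1} consistent with the case distinctions of Lemma \ref{velocitylem} so that the atomic and continuous pieces of ${\rm d}q,{\rm d}E$ align with those of ${\rm d}m$. Once the velocity identity $v(y_0,t)=u(x,t)$ and the jump relation $[q]=u[m]$ are in hand, assembling ${\rm d}q=u\,{\rm d}m$ and ${\rm d}E=u^2\,{\rm d}m$ is immediate.
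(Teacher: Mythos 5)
Your overall architecture --- reading \eqref{qeum} as ${\rm d}q=u\,{\rm d}m$ and ${\rm d}E=u^{2}\,{\rm d}m$ for the pushforwards of ${\rm d}m_0$, $v\,{\rm d}m_0$, $v\,u(x(\cdot,t),t)\,{\rm d}m_0$ under $\eta\mapsto x(\eta,t)$, then splitting ${\rm d}m$ into atomic and continuous parts --- is a legitimate alternative to the paper's pointwise difference-quotient argument, and your atomic computation is essentially the paper's own (it uses \eqref{xprime2} together with the facts \eqref{xetaeq}--\eqref{uxetaeq1} that all $\eta$ in the shock fiber satisfy $x(\eta,t)=x$, so $u(x(\eta,t),t)$ factors out of the $E$-increment). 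Moreover, the obstacle you flag is not the real one: in one dimension any fine cover by (not necessarily centered) intervals admits a subcover of overlap at most two, so Besicovitch-type differentiation of $v\,{\rm d}m_0$ against ${\rm d}m_0$ holds $m_0$-a.e.\ even without doubling; that half of your continuous-part plan goes through.

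The genuine gap is the step you call ``the algebraic identity $v(y_0,t)=u(x,t)$''. You write $v(y_0,t)=c(y_0;x,t)e^{-t/\tau}+\widetilde m_0(y_0)(\tau e^{-t/\tau}-\tau)$, but $v$ is defined with $u_0(y_0)$, while $u(x,t)$ at a generic non-shock point is defined through the \emph{backward characteristic speed} $c(y_0;x,t)$ via \eqref{xprime5}; the substitution silently assumes $u_0(y_0)=c(y_0;x,t)$, i.e.\ that the particle at $y_0$ has moved on its own free characteristic up to time $t$. That is not algebra but the substantive content of the lemma: it fails exactly for particles with $x(y_0,t)\neq X(y_0,t)$ (particles displaced by interactions), and your plan contains no argument that the set of such particles whose image is a non-atom of ${\rm d}m$ is $m_0$-null. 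Closing this requires the one ingredient you never use, the \emph{minimizing} property of $y_*$: the sandwich inequalities $F(y_*(x_2,t);x_2,t)\le F(y_*(x_1,t);x_2,t)$ and $F(y_*(x_1,t);x_1,t)\le F(y_*(x_2,t);x_1,t)$ force the $m_0$-averages of $u_0$ over $(y_*(x_1,t),y_*(x_2,t))$ to converge to $c(y_0;x,t)$ as $x_1,x_2\to x$, which combined with your differentiation step (averages $\to u_0(y_0)$ $m_0$-a.e.) yields $u_0(y_0)=c(y_0;x,t)$ a.e. This is precisely how the paper argues (cf.\ the computations around \eqref{uxty0} and \eqref{loweq}--\eqref{upeq}); the same unproved identification $u(x,t)=v(y_*(x,t),t)$ also lurks in your single-atom case, where it is justified by the same sandwich. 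Note finally that at points governed by \eqref{xprime4} no identity is needed, since there $u(x,t)=\bar x'(t)$ is by definition the limit of the relevant averages; your gap is concentrated at the points governed by \eqref{xprime3} and \eqref{xprime5}.
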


\begin{proof}
For {\it a.e.} $(x_0,t_0)\in \mathbb{R}\times\mathbb{R}^+$, denote
$U_{\delta}(x_0,t_0):=(x_0-\delta,x_0{+}\delta) \times \{t_0\}$ with sufficiently small $\delta>0$.
Due to whether $y_*(x,t_0)$ can keep constant in $U_{\delta}(x_0,t_0)$ or not for $\delta>0$,
there exist two cases.

\smallskip
\noindent
{\bf 1.} Suppose that $y_*(x,t_0)$ keeps constant in $U_{\delta}(x_0,t_0)$ for  $\delta>0$. 
Then, for any $(x,t_0) \in U_{\delta}(x_0,t_0)$, $y_*(x,t_0)=y_*(x_0,t_0)$,
which, by Lemma $\ref{convergelem}$, implies  
$$y^*(x,t_0)=y_*(x_0,t_0)\qquad\,\,\, {\rm for}\ x\in(x_0-\delta,x_0{+}\delta).$$
Then there are two subcases: 

\vspace{2pt}
\noindent
{\it Subcase} 1. $\nu(x,t)\equiv F(y_*(x_0,t_0);x,t_0)$ or $\nu(x,t)\equiv F(y_*(x_0,t_0){+};x,t_0)$ for $x\in(x_0-\delta,x_0{+}\delta)$ with $\delta$ sufficiently small.

For such case, $m(x,t_0)$, $q(x,t_0)$, and $E(x,t_0)$ keep constant in $U_{\delta}(x_0,t_0)$ so that $\eqref{qeum}$ holds.
In particular, each $(x,t)$ that satisfies $c(y_*(x,t);x,t)>u_0(y_*(x,t))$ with $c(y_*(x,t){+};x,t)>U_0$ or $c(y_*(x,t);x,t)<u_0(y_*(x,t))$ with $c(y_*(x,t){-};x,t)<-U_0$  belongs to this {\it Subcase} 1.

\vspace{2pt}
\noindent
{\it Subcase} 2. $\nu(x,t)\equiv F(y_*(x_0,t_0);x,t_0)$ for $x\in (x_0-\delta,x_0)$,
and $\nu(x,t)\equiv F(y_*(x_0,t_0)+;x,t_0)$ for $x\in (x_0,x_0+\delta)$ with $\delta$ sufficiently small.

In this case, $m(x_0{-},t_0)\neq m(x_0{+},t_0)$,
which implies that $\rho_0$ possesses a positive mass at $y_0:=y_*(x_0,t_0)$. 
This, by Lemmas $\ref{minlem}$ and $\ref{velocitylem}$, implies
\begin{equation}\label{uxty0}
u(x_0,t_0)=\frac{x{-}y_0}{\tau e^{\frac{t_0}{\tau}}-\tau }{+} \widetilde{m}_0(y_0)\big(\frac{t_0}{e^{\frac{t_0}{\tau}}{-}1}{-}\tau \big).
\end{equation}
For any $x_1,x_2 \in U_{\delta}(x_0,t_0)$ and $x_1<x_0<x_2$, we have 
$$ F(y_0;x_1,t_0) \leq F(y_0{+};x_1,t_0),
\qquad\,\,\, F(y_0{+};x_2,t_0) \leq F(y_0;x_2,t_0),$$
so that
$$\int _{y_0{-}}^{y_0{+}}u_0(\eta) e^{-\frac{t}{\tau}}{+}\widetilde{m}_0(\eta)(\tau e^{-\frac{t}{\tau}}-\tau) \, {\rm d}m_0(\eta) 
\geq \int_{y_0{-}}^{y_0{+}}\frac{x_1-\eta}{\tau e^{\frac{t_0}{\tau}}-\tau}{+}\widetilde{m}_0(\eta)\big(\frac{t_0}{e^{\frac{t_0}{\tau}}{-}1}{-}\tau \big) \, {\rm d}m_0(\eta),$$
$$\int _{y_0{-}}^{y_0{+}}u_0(\eta) e^{-\frac{t}{\tau}}{+}\widetilde{m}_0(\eta)(\tau e^{-\frac{t}{\tau}}-\tau)\, {\rm d}m_0(\eta) 
\leq \int_{y_0{-}}^{y_0{+}}\frac{x_2-\eta}{\tau e^{\frac{t_0}{\tau}}-\tau}{+}\widetilde{m}_0(\eta)\big(\frac{t_0}{e^{\frac{t_0}{\tau}}{-}1}{-}\tau \big) \, {\rm d}m_0(\eta),$$
Let $x_2,x_1\rightarrow x_0{}\pm$,
\begin{equation*}
u_0(y_0) e^{-\frac{t}{\tau}}{+}\widetilde{m}_0(y_0)(\tau e^{-\frac{t}{\tau}}-\tau)=
\frac{x{-}y_0}{\tau e^{\frac{t_0}{\tau}}-\tau }{+} \widetilde{m}_0(y_0)\big(\frac{t_0}{e^{\frac{t_0}{\tau}}{-}1}{-}\tau \big)
\end{equation*}
This, by $\eqref{uxty0}$, yields
\begin{align}
\lim_{x_2,x_1 \rightarrow x_0{\pm}0}\,
\frac{q(x_2,t_0)-q(x_1,t_0)}{m(x_2,t_0)-m(x_1,t_0)}
&=u_0(y_0) e^{-\frac{t_0}{\tau}}{+}\widetilde{m}_0(y_0)(\tau e^{-\frac{t_0}{\tau}}-\tau)\nonumber\\[2mm]
&=u(x_0,t_0).\label{qqmmu}
\end{align}
Furthermore, by \eqref{Eeq1}, it follows from \eqref{uxty0} and \eqref{qqmmu} that
\begin{align}
\!\!\!\! 
&\lim_{x_2,x_1 \rightarrow x_0{\pm}0}\,\frac{E(x_2,t_0)-E(x_1,t_0)}{m(x_2,t_0)-m(x_1,t_0)}\nonumber\\[1mm]
&=\lim_{x_2,x_1 \rightarrow x_0{\pm}0}\frac{\int_{y_0{-}}^{y_0{+}} \big(u_0(\eta) e^{-\frac{t}{\tau}}+ \widetilde{m}_0(\eta)(\tau e^{-\frac{t}{\tau}}-\tau)\big)u(x(\eta,t_0),t_0){\rm d}m_0(\eta)}{\int_{y_0{-}0}^{y_0{+}0}\,{\rm d}m_0(\eta)}\nonumber\\[1mm]
&=\big(u_0(y_0) e^{-\frac{t}{\tau}}{+}\widetilde{m}_0(y_0)(\tau e^{-\frac{t}{\tau}}-\tau)\big)
u(x(y_0,t_0),t_0)\nonumber\\[1mm]
&=u^2(x_0,t_0).\label{EEmmu2}
\end{align}
This means that $\eqref{qeum}$ holds.

\smallskip
\noindent
{\bf 2.}  Suppose that $y_*(x,t_0)$ can not keep constant in $U_{\delta}(x_0,t_0)$ for any $\delta>0$. 
There exist two subcases.
When $y_*(x_0,t_0)=y^*(x_0,t_0)$ or $y_*(x_0,t_0)<y^*(x_0,t_0)$ with $[m(x_0,t_0)]=0$, it is similar to {\it Subcases} 2 above.

When $y_*(x_0,t_0)<y^*(x_0,t_0)$ with $[m(x_0,t_0)]>0$, without loss of generality,
assume that 
\begin{equation}\label{nuFy*}
    \nu(x_0,t_0)=F(y_*(x_0,t_0);x_0,t_0)=F(y^*(x_0,t_0){+};x_0,t_0).
\end{equation}
Similar to $\eqref{qqmmu}$, by using $\eqref{xprime2}$ and $\eqref{ueq1}$, it can be check that 
\begin{align}
\lim_{x_2,x_1 \rightarrow x_0{\pm}0}\,
\frac{q(x_2,t_0)-q(x_1,t_0)}{m(x_2,t_0)-m(x_1,t_0)}
&=
\frac{\int_{y_*(x_0,t_0){-}}^{y^*(x_0,t_0){+}}u_0(\eta) e^{-\frac{t}{\tau}}+ \widetilde{m}_0(\eta)(\tau e^{-\frac{t}{\tau}}-\tau) \, {\rm d}m_0(\eta)}
{\int_{y_*(x_0,t_0){-}}^{y^*(x_0,t_0){+}} \, {\rm d}m_0(\eta)}\nonumber\\[1mm]
& =u(x_0,t_0).\label{3.3a}
\end{align}
To prove $\eqref{EEmmu2}$ for this case, we first point out that,
\begin{equation}\label{xetaeq}
x(\eta,t_0)=x_0 \qquad\,\,\, {\rm for}\ y_*(x_0,t_0)<\eta<y^*(x_0,t_0),
\end{equation}
and hence
\begin{equation}\label{uxetaeq1}
u(x(\eta,t_0),t_0)=u(x_0,t_0).
\end{equation}

If $\rho_0$ has a positive mass at $y_*(x_0,t_0)$ ({\it resp.,} $y^*(x_0,t_0)$), then  
\begin{equation}\label{xeq1}
\begin{cases}
x(y_*(x_0,t_0),t_0)<x_0 & \quad {\rm if} \  u_0(y_*(x_0,t_0))<c(y_*(x_0,t_0);x_0,t_0),\\[1mm]
x(y_*(x_0,t_0),t_0)=x_0 & \quad {\rm otherwise},
\end{cases} 
\end{equation}
and respectively,
\begin{equation} \label{xeq2}
\begin{cases}
x(y^*(x_0,t_0),t_0)>x_0 &\quad {\rm if} \  u_0(y^*(x_0,t_0))>c(y^*(x_0,t_0);x_0,t_0),\\[1mm]
x(y^*(x_0,t_0),t_0)=x_0 &\quad {\rm otherwise}.
 \end{cases} 
\end{equation}
Furthermore, if $[m_0(y_*(x_0,t_0))]>0$ and $[m_0(y^*(x_0,t_0))]>0$, 
by \eqref{nuFy*}, it follows from Lemma \ref{minlem} that 
$$u_0(y_*(x_0,t_0))\ge c(y_*(x_0,t_0);x_0,t_0)
\qquad\,\,\, u_0(y^*(x_0,t_0))\le c(y^*(x_0,t_0);x_0,t_0).$$
Then, by combining $\eqref{xetaeq}$--$\eqref{uxetaeq1}$ with $\eqref{xeq1}$--$\eqref{xeq2}$, 
it follows from Lemma $\ref{velocitylem}$ that
\begin{align*}
&\lim_{x_2,x_1 \rightarrow x_0{\pm}0}\,
\frac{E(x_2,t_0)-E(x_1,t_0)}{m(x_2,t_0)-m(x_1,t_0)}\\[1mm]
&=\lim_{x_2,x_1 \rightarrow x_0{\pm}0}\frac{\int_{y_*(x_1,t_0){\pm}}^{y_*(x_2,t_0){\pm}} \big(u_0(\eta) e^{-\frac{t}{\tau}}+ \widetilde{m}_0(\eta)(\tau e^{-\frac{t}{\tau}}-\tau)\big)u(x(\eta,t_0),t_0){\rm d}m_0(\eta)}{\int_{y_*(x_1,t_0){\pm}}^{y_*(x_2,t){\pm}}\,{\rm d}m_0(\eta)}\\[1mm]
&=\frac{\int_{y_*(x_0,t_0){-}}^{y^*(x_0,t_0){+}}\big(u_0(\eta) e^{-\frac{t}{\tau}}+ \widetilde{m}_0(\eta)(\tau e^{-\frac{t}{\tau}}-\tau)\big) u(x_0,t_0) \, {\rm d}m_0(\eta)}
{\int_{y_*(x_0,t_0){-}}^{y^*(x_0,t_0){+}} \, {\rm d}m_0(\eta)}\\[1mm]
&=u^2(x_0,t_0).
\end{align*}

To sum up, Lemma $\ref{RNlem}$ has been proved.
\end{proof}

\subsection{Proof of the mass equation}
We now use the first generalized potential $F(y;x,t)$ in $\eqref{potentialF}$ 
and the $\nu(x,t)$ in $\eqref{nuSeq}$ to prove the mass equation.
The key point is the following lemma
on the relations of $\nu(x,t)$ with $m(x,t)$ and $q(x,t)$, respectively.

\begin{Lem}\label{mqnulem}
Let the mass $m(x,t)$ and momentum $q(x,t)$ be defined by $\eqref{meq1}$--$\eqref{qeq1}$.
Then, $\nu(x,t)$ as in $\eqref{nuSeq}$ is Lipschitz continuous and satisfies the following properties{\rm:}
\begin{itemize}
\item [(i)] For any $t>0$ and $x_1,x_2\in\mathbb{R}$,
\begin{equation}\label{mnueq}
\int_{x_1}^{x_2}m(x,t)\,{\rm d}x=\nu(x_1,t)-\nu(x_2,t).
\end{equation}

\item [(ii)] For any $x\in\mathbb{R}$ and $t_2>t_1>0$,
\begin{equation}
\label{qnueq}
\int_{t_1}^{t_2}q(x,t)\,{\rm d}t=\nu(x,t_2)-\nu(x,t_1).
\end{equation}
\end{itemize}
\end{Lem}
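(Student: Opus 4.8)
The plan is to reduce both identities to the envelope (Danskin-type) relations $\nu_x=-m$ and $\nu_t=q$ holding almost everywhere, and then to integrate using the absolute continuity guaranteed by Lipschitz regularity. The starting observation is that the integrand in \eqref{potentialF} is affine in $x$ and smooth in $t$, so that for each fixed $y$,
\begin{equation*}
\partial_x F(y;x,t)=-\!\int_{-\infty}^{y{-}}{\rm d}m_0(\eta),\qquad \partial_t F(y;x,t)=\int_{-\infty}^{y{-}} u_0(\eta)e^{-\frac{t}{\tau}}+\widetilde{m}_0(\eta)(\tau e^{-\frac{t}{\tau}}-\tau)\,{\rm d}m_0(\eta).
\end{equation*}
Comparing with $\eqref{meq1}$ and $\eqref{qeq1}$, evaluating these partials at $y=y_*(x,t)$, and choosing the $y_*{-}$ or $y_*{+}$ branch exactly as in the two-case definitions, reproduces $-m(x,t)$ and $q(x,t)$ respectively. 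This is the algebraic heart of the lemma; the remaining work is to justify that the derivatives of the minimum $\nu$ equal these evaluated partials at almost every point.

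First I would establish Lipschitz continuity. Since $m_0$ takes values in $[0,M]$, $u_0\in L^{\infty}_{\rho_0}$, and $\widetilde{m}_0\in[-\frac{M}{2},\frac{M}{2}]$, the two partials above are bounded uniformly in $y$ on any strip $\mathbb{R}\times[t_1,t_2]\subset\mathbb{R}\times\mathbb{R}^+$. A pointwise minimum of a family of functions sharing a common Lipschitz constant is itself Lipschitz with that constant, so $\nu$ is Lipschitz (globally in $x$ and locally uniformly in $t$), hence absolutely continuous along horizontal and vertical lines and differentiable almost everywhere.

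For (i), note that $F(\,\cdot\,;x,t)$ is affine in $x$ with slope $-m_0(y{-})\le 0$, so $\nu(\cdot,t)=\min_y F(y;\cdot,t)$ is concave in $x$. Using the minimizing property at $x_1<x_2$ together with this affine dependence, one obtains the sandwich
\begin{equation*}
-m_0\big(y_*(x_2,t){-}\big)\le \frac{\nu(x_2,t)-\nu(x_1,t)}{x_2-x_1}\le -m_0\big(y_*(x_1,t){-}\big).
\end{equation*}
Letting $x_2\to x_1{+}$ and invoking the one-sided limit $y_*(x{+}\epsilon,t)\to y^*(x,t)$ from $\eqref{y*right}$, the difference quotient is squeezed to $-m(x,t)$ at every $x$ for which $y_*(x,t)=y^*(x,t)$ and $\rho_0$ carries no atom at $y_*(x,t)$; by $\eqref{geoentropy}$ the exceptional set of such $x$ is countable. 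Hence $\nu_x(x,t)=-m(x,t)$ for a.e.\ $x$, and absolute continuity yields $\nu(x_1,t)-\nu(x_2,t)=\int_{x_1}^{x_2}m(x,t)\,{\rm d}x$.

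For (ii), the analogous statement $\nu_t(x,t)=q(x,t)$ a.e.\ is the main obstacle, because $F$ is nonlinear in $t$ and concavity is no longer available. Instead I would run the same comparison directly in time: fixing $x$ and $t_1<t_2$ with $\nu(x,t_i)=F(y_*(x,t_i);x,t_i)$, the minimizing property gives
\begin{equation*}
\int_{t_1}^{t_2}\partial_t F\big(y_*(x,t_2);x,t\big)\,{\rm d}t\le \nu(x,t_2)-\nu(x,t_1)\le \int_{t_1}^{t_2}\partial_t F\big(y_*(x,t_1);x,t\big)\,{\rm d}t,
\end{equation*}
which is exactly the type of two-sided estimate carried out around $\eqref{loweq}$--$\eqref{upeq}$. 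Sending $t_2\to t_1{+}$ and using the continuity of $y_*$ in $t$ from Lemmas $\ref{convergelem}$ and $\ref{curvelem}$, both bounds converge to $\partial_t F(y_*(x,t_1);x,t_1)=q(x,t_1)$, so $\nu_t(x,t)=q(x,t)$ for a.e.\ $t$, and integrating gives the stated identity. The delicate points to watch are the correct selection of the $y_*{\pm}$ branch, dictated by $\eqref{nueq}$ according to whether the minimum is attained at $y_*$ or only as a right limit, and the control of $y_*(x,t)$ as $t$ varies, which is precisely where the earlier structural lemmas are indispensable.
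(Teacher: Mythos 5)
Your overall skeleton (Lipschitz regularity of $\nu$ as a minimum of uniformly Lipschitz functions, then $\nu_x=-m$, $\nu_t=q$ a.e., then integration) is the same comparison argument that drives the paper's proof, and the Lipschitz observation itself is fine. But two of your steps are genuinely wrong as written. In (i), your sandwich uses $-m_0(y_*(x_i,t){-})$ on both sides; the lower bound fails whenever the minimum at $x_2$ is attained only as a right limit, i.e. $\nu(x_2,t)=F(y_*(x_2,t){+};x_2,t)$. Concretely, take $\rho_0=\delta_0$, $u_0(0)=0$: then $y_*(x,t)\equiv 0$ for every $x$, $\nu(x,t)=\min\{0,-x\}$, and for $x_1=-1<x_2=1$ the difference quotient is $-1/2$ while your lower bound $-m_0(y_*(x_2,t){-})=0$. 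The correct sandwich must use the branch-corrected mass, $-m(x_2,t)\le \big(\nu(x_2,t)-\nu(x_1,t)\big)/(x_2-x_1)\le -m(x_1,t)$, which is exactly \eqref{nuxeqneq} in the paper; you flag branch selection as a ``delicate point'' at the end, but the displayed inequality already needs it. The same example destroys your exceptional-set claim: the set where $\rho_0$ has an atom at $y_*(x,t)$ need not be countable (here it is all of $\mathbb{R}$), since $\{x:y_*(x,t)=a\}$ is an interval for each atom $a$. What is true and suffices: with the corrected sandwich the squeeze closes at every continuity point of the monotone function $m(\cdot,t)$, and these miss at most countably many $x$; alternatively one can skip a.e.\ differentiability entirely and sum the corrected sandwich over a partition, which is the paper's route.

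In (ii) the gap is more serious: you invoke ``continuity of $y_*$ in $t$ from Lemmas \ref{convergelem} and \ref{curvelem}'', but those lemmas do not provide it. Lemma \ref{convergelem}(ii) only says that limit points of $y_*(x,t_2)$ as $t_2\to t_1{+}$ lie in $S(x,t_1)\cap{\rm spt}\{\rho_0\}$, which may strictly contain $y_*(x,t_1)$, and $t\mapsto y_*(x,t)$ does jump (for instance when a $\delta$-shock crosses the vertical line through $x$). Consequently your lower bound converges, along subsequences, to $\partial_t F(y_0;x,t_1)$ for some $y_0\in S(x,t_1)$ with possibly positive $m_0$-mass between $y_*(x,t_1)$ and $y_0$, and this need not equal $q(x,t_1)$; the one-sided squeeze does not close. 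The repair is to compare two-sidedly in time using only the minimizer at the point itself: for $t'<t$, $\nu(x,t)-\nu(x,t')\ge F(y_*(x,t);x,t)-F(y_*(x,t);x,t')$ gives $\nu_t(x,t)\ge q(x,t)$ at differentiability points, while for $t''>t$, $\nu(x,t'')-\nu(x,t)\le F(y_*(x,t);x,t'')-F(y_*(x,t);x,t)$ gives $\nu_t(x,t)\le q(x,t)$; no continuity of $y_*$ in $t$ is needed. This two-sided comparison is in substance the paper's computation \eqref{Fyxtpri}--\eqref{Fyxxtpri1}, so your proposal becomes correct once both (i) and (ii) are rewritten with the branch-corrected quantities and the two-sided time comparison.
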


\begin{proof}
The proof is divided into two steps accordingly.

\smallskip
 \noindent
{\bf 1.} We first prove $\eqref{mnueq}$. 
Denote $y_*:=y_*(x,t)$ and $y_*':=y_*(x',t)$.
Without loss of generality, assume that $\nu(x,t)=F(y_*;x,t)$
and $\nu(x',t)=F(y_*'{+}0;x',t)$. Then,
\begin{align}\label{nuxeq}
\nu(x,t)-\nu(x',t)
&=[\nu(x,t)-F(y_*'{+};x,t)]+[F(y_*'{+};x,t)-\nu(x',t)]\nonumber\\[1mm] 
&=[\nu(x,t)-F(y_*;x',t)]+[F(y_*;x',t)-\nu(x',t)]. 
\end{align}
Since $\nu(x,t)-F(y_*'{+};x,t)\leq 0$ and $F(y_*;x',t)-\nu(x',t)\geq 0$, 
then it follows from $\eqref{nuxeq}$ that
\begin{align*}
\nu(x,t)-F(y_*;x',t)\leq
\nu(x,t)-\nu(x',t)
\leq F(y_*'{+};x,t)-\nu(x',t). 
\end{align*}
which, by a simple calculation, implies 
\begin{align}\label{nuxeqneq}
(x'-x)m(x,t)\leq
\nu(x,t)-\nu(x',t)
\leq (x'-x)m(x',t). 
\end{align}

By the arbitrariness of $x$ and $x'$, 
$\eqref{mnueq}$ follows by integrating $\eqref{nuxeqneq}$ over $(x_1,x_2)$.

\smallskip
 \noindent
{\bf 2.} We now prove $\eqref{qnueq}$.
Similar to $\eqref{nuxeq}$, $\nu(x,t)-\nu(x,t')$ lies between 
\begin{align}
&\quad \, F(y_*;x,t)-F(y_*;x,t') \nonumber\\[1mm]
&=(t-t') \int _{{-}\infty}^{y_*{-}0} u_0(\eta) \tau \frac{ e^{-\frac{t'}{\tau}}- e^{-\frac{t}{\tau}}}{t-t'}
+\widetilde{m}_0(\eta) 
(\tau^2 \frac{ e^{-\frac{t'}{\tau}}- e^{-\frac{t}{\tau}}}{t-t'}-\tau) \,{\rm d}m_0(\eta),\nonumber\\[1mm]
&=(t-t')q(x,t)+O(1)(t-t')^2,  \label{Fyxtpri}
\end{align}
and
\begin{align}
& \quad\, F(y_*'{+};x,t)-F(y_*'{+};x,t')\nonumber\\[1mm]
&=(t-t') \int _{{-}\infty}^{y_*'{+}0} u_0(\eta) \tau \frac{ e^{-\frac{t'}{\tau}}- e^{-\frac{t}{\tau}}}{t-t'}
+\widetilde{m}_0(\eta) 
(\tau^2 \frac{ e^{-\frac{t'}{\tau}}- e^{-\frac{t}{\tau}}}{t-t'}-\tau) \,{\rm d}m_0(\eta) \,{\rm d}m_0(\eta).\nonumber\\[1mm]
&=(t-t')q(x,t')+O(1)(t-t')^2, \label{Fyxxtpri1}
\end{align}
Integrating over $(t_1,t_2)$ yields $\eqref{qnueq}$.
\end{proof}

We now prove the mass equation in $\eqref{weakeq}$.
From Lemma $\ref{mqnulem}$,  $\nu_x=-m(x,t)$ and $\nu_t=q(x,t)$ in the sense of distribution. 
Then, it follows from $\eqref{qeum}$ that, for any $\varphi \in C_c^{\infty}(\mathbb{R}\times\mathbb{R}^+)$,
\begin{align*} 
\iint m \varphi_t \,{\rm d}x {\rm d}t -\iint u\varphi \,{\rm d}m {\rm d}t
&=\iint(m \varphi_t+q \varphi_x) \,{\rm d}x {\rm d}t \\[2mm]
&=\iint \nu_t\varphi_x-\nu_x \varphi_t \,{\rm d}x {\rm d}t=0.
\end{align*}

\subsection{Proof of the momentum equation}
We now prove the momentum equation.
As in $\eqref{potentialG}$, the second generalized potential $G(y;x,t)$ is given by
\begin{equation*}
G(y;x,t)=\int_{{-}\infty}^{y{-}} \big(u_0(\eta) e^{-\frac{t}{\tau}}+\widetilde{m}_0(\eta)(\tau e^{-\frac{t}{\tau}}-\tau) +k\big)(x(\eta,t)-x) {\rm d}m_0(\eta),
\end{equation*}
where $k$ is any constant satisfying $k>U_0{+}\frac{M}{2} \tau$ 
and $x(\eta,t)$ is defined by $\eqref{xetaeq1}$--$\eqref{xetaeq2}$.

\smallskip
Since $x(\eta,t)$ is increasing in $\eta$, 
it follows from $x(y_*(x,t){-},t) \leq x$ and $x(y^*(x,t){+},t) \geq x$ that 
$G(y;x,t)$ has a finite low bound for any $(x,t)$ and 
$$y_*(x,t)=y^G_*(x,t) \in S_G(x,t),$$ 
where $y^G_*(x,t)$ and $S_G(x,t)$ are defined similar to $\eqref{nuSeq}$--$\eqref{y*eq}$. 

\smallskip
Let $\mu(x,t)=\min\limits_y G(y;x,t)$. By the same arguments, 
Lemma $\ref{mqnulem}$ still holds for $G(y;x,t)$. 
Then, it can be checked that
\begin{equation}\label{muqmE}
\mu _x =-(\bar{q}+k\bar{m}),\qquad
\mu_t=\bar{E}+\omega(x,t)+kh(x,t), 
\end{equation}
where $\bar{m}:=\bar{m}(x,t)$, $\bar{q}:=\bar{q}(x,t)$, and $\bar{E}:=\bar{E}(x,t)$
with $\omega(x,t)$ and $h(x,t)$ are given by
\begin{eqnarray*}
&&\bar{m}(x,t)=
\begin{cases}
\int_{{-}\infty}^{y_*(x,t){-}} {\rm d}m_0(\eta)\ \
{\rm if} \, \mu(x,t)=G(y_*(x,t);x,t),\\[2mm]
\int_{{-}\infty}^{y_*(x,t){+}} {\rm d}m_0(\eta) \ \ {\rm otherwise};
\end{cases}\label{barmy}\\[2mm]
&&\ \bar{q}(x,t)=
\begin{cases}
\int_{{-}\infty}^{y_*(x,t){-}}u_0(\eta) e^{-\frac{t}{\tau}}+ \widetilde{m}_0(\eta)(\tau e^{-\frac{t}{\tau}}-\tau) \,{\rm d}m_0(\eta) 
\ \ {\rm if} \, \mu(x,t)=G(y_*(x,t);x,t),\\[2mm]
\int_{{-}\infty}^{y_*(x,t){+}}u_0(\eta) e^{-\frac{t}{\tau}}+ \widetilde{m}_0(\eta)(\tau e^{-\frac{t}{\tau}}-\tau) \,{\rm d}m_0(\eta) \ \ {\rm otherwise};
\end{cases}\label{barqy}\\[2mm]
&&\bar{E}(x,t)=\\
&&\begin{cases}
\int_{{-}\infty}^{y_*(x,t){-}}\big(u_0(\eta) e^{-\frac{t}{\tau}}+ \widetilde{m}_0(\eta)(\tau e^{-\frac{t}{\tau}}-\tau)\big)u(x(\eta,t),t) \,{\rm d}m_0(\eta) 
\  {\rm if} \, \mu(x,t)=G(y_*(x,t);x,t),\\[2mm]
\int_{{-}\infty}^{y_*(x,t){+}}\big(u_0(\eta) e^{-\frac{t}{\tau}}+ \widetilde{m}_0(\eta)(\tau e^{-\frac{t}{\tau}}-\tau)\big)u(x(\eta,t),t)\,{\rm d}m_0(\eta) \ {\rm otherwise};
\end{cases}\label{barEy}\\[2mm]
&&\ \omega(x,t)=
\begin{cases}
-\frac{1}{\tau}e^{-\frac{t}{\tau}} \int_{{-}\infty}^{y_*(x,t){-}}(u_0(\eta)+\tau \widetilde{m}_0(\eta))(x(\eta,t)-x) \,{\rm d}m_0(\eta) 
\ \ {\rm if} \, \mu(x,t)=G(y_*(x,t);x,t),\\[2mm]
-\frac{1}{\tau}e^{-\frac{t}{\tau}} \int_{{-}\infty}^{y_*(x,t){-}}(u_0(\eta)+\tau \widetilde{m}_0(\eta))(x(\eta,t)-x)\,{\rm d}m_0(\eta) \ \ {\rm otherwise};
\end{cases}\label{baromegay}\\[2mm]
&&\ h(x,t)=
\begin{cases}
\int_{{-}\infty}^{y_*(x,t){-}} u(x(\eta,t),t)\, {\rm d}m_0(\eta) 
\ \ {\rm if} \, \mu(x,t)=G(y_*(x,t);x,t),\\[2mm]
\int_{{-}\infty}^{y_*(x,t){+}} u(x(\eta,t),t)\,{\rm d}m_0(\eta) \ \ {\rm otherwise}.
\end{cases}\label{barvy}
\end{eqnarray*}

We claim: {\it For any $t>0$, it holds that}
$$ m=\bar{m},\qquad q=\bar{q},\qquad E=\bar{E}\qquad \quad a.e..$$

In fact, due to $y_*(x,t)=y^G_*(x,t)$, $m(x,t) \neq \bar{m}(x,t)$ is possible only if $[m_0(y_*(x,t))]>0$. 
Noticing that the points $(y,0)$ such that $[m_0(y)]>0$ are at most countable, 
since the map from $(x,t)$ to $(y_*(x,t),0)$ is not one to one, 
we need consider the case that an interval maps to one point, {\it i.e.,} 
there exist more than one forward generalized characteristics emitting from $(y_*(x,t),0)$. 
For such case, we let $x \in (a(\eta,t),b(\eta,t))$ for $a(\eta,t)$ and $b(\eta,t)$ given by $\eqref{aetat}$--$\eqref{betat}$. 
There are two subcases:

If $c(y_*(x,t);x,t)>u_0(y_*(x,t))$, by Lemma $\ref{minlem}$, 
both $F(y;x,t)$ and $G(y;x,t)$ do not arrive the minimum at $y_*(x,t)$ so that $m(x,t)=\bar{m}(x,t)$; 
and if $c(y_*(x,t);x,t)\leq u_0(y_*(x,t))$, 
both $F(y;x,t)$ and $G(y;x,t)$ arrive the minimum at $y_*(x,t)$. 

Thus, $F(y;x,t)$ and $G(y;x,t)$ always achieve or do not achieve the minimum at $y_*(x,t)$ at the same time. 
This infers $m(x,t)=\bar{m}(x,t)$. 
By the same arguments, it is direct to check that $q(x,t)=\bar{q}(x,t)$ and $E(x,t)=\bar{E}(x,t)$.

\vspace{2pt}
Since $k>U_0{+}\frac{M}{2}\tau$ is arbitrary, it follows from $\eqref{muqmE}$ that
\begin{equation}\label{thetaqEw}
\theta_x=-q,\qquad \theta_t=E+\omega(x,t),
\end{equation}
where $\theta:=\theta(x,t)$ is given by
\begin{align*}
\ \
&\theta(x,t)=\\
&\begin{cases}
\int_{{-}\infty}^{y_*(x,t){-}}\big(u_0(\eta) e^{-\frac{t}{\tau}}+ \widetilde{m}_0(\eta)(\tau e^{-\frac{t}{\tau}}-\tau)\big)(x(\eta,t){-}x) \,{\rm d}m_0(\eta)
\  {\rm if} \ \mu(x,t)=G(y_*(x,t);x,t),\\[2mm]
\int_{{-}\infty}^{y_*(x,t){+}}\big(u_0(\eta) e^{-\frac{t}{\tau}}+ \widetilde{m}_0(\eta)(\tau e^{-\frac{t}{\tau}}-\tau)\big)(x(\eta,t){-}x) \,{\rm d}m_0(\eta)\  {\rm otherwise}.
\end{cases}
\end{align*}

As in $\eqref{potentialH}$, the third generalized potential $H(y;x,t)$ is given by
\begin{equation*}
H(y;x,t)=-\frac{1}{\tau}e^{-\frac{t}{\tau}} \int_{{-}\infty}^{y{-}}(u_0(\eta)+\tau \widetilde{m}_0(\eta)+k)(x(\eta,t)-x) \,{\rm d}m_0(\eta).
\end{equation*}
where $k>U_0+\frac{M}{2} \tau$.
By the same arguments as $G(y;x,t)$, for any $(x,t)$,
$H(y;x,t)$ has a finite low bound  
and the minimum points of $H(y;x,t)$ are the same as that of $G(y;,x,t)$. 
Similar to the proof of Lemma \ref{mqnulem}, in the sense of distribution, we have
\begin{align*}
&\frac{\partial}{\partial x}\big(\min_y H(y;x,t)\big)=\frac{1}{\tau}e^{-\frac{t}{\tau}} \int_{{-}\infty}^{y_*(x,t){\pm}}u_0(\eta)+\tau \widetilde{m}_0(\eta)+k \,{\rm d}m_0(\eta)\\[1mm]
&=\frac{1}{\tau}\int_{{-}\infty}^{y_*(x,t){\pm}} u_0(\eta)e^{-\frac{t}{\tau}} +\widetilde{m}_0(\eta)(\tau e^{-\frac{t}{\tau}}-\tau) \,{\rm d}m_0(\eta)+\int_{{-}\infty}^{y_*(x,t){\pm}} \widetilde{m}_0(\eta) \,{\rm d}m_0(\eta)\\[1mm]
&\quad +k \frac{1}{\tau}e^{-\frac{t}{\tau}}\int_{{-}\infty}^{y_*(x,t){\pm}} \,{\rm d}m_0(\eta) \\[1mm]
&=\frac{1}{\tau} q(x,t)+\int_{{-}\infty}^{y_*(x,t){\pm}} \widetilde{m}_0(\eta) \,{\rm d}m_0(\eta)+k \frac{1}{\tau}e^{-\frac{t}{\tau}}\int_{{-}\infty}^{y_*(x,t){\pm}} \,{\rm d}m_0(\eta),
\end{align*}
which, by the arbitrariness of $k$, implies
\begin{align}\label{3.3}
\omega_x(x,t)=\frac{1}{\tau} q(x,t)+\int_{{-}\infty}^{y_*(x,t){\pm}} \widetilde{m}_0(\eta) \,{\rm d}m_0(\eta).
\end{align}
By the BV chain rule,  
$\widetilde{m}_0(y){\rm d}m_0(y)={\rm d}(\frac{1}{2}m_0^2(y)-\frac{M}{2}m_0(y))$, combining with the definition of $m(x,t)$ in \eqref{meq1}, yields
\begin{align}
\int_{{-}\infty}^{y_*(x,t){\pm}} \widetilde{m}_0(\eta) \,{\rm d}m_0(\eta)
&=\frac{1}{2}m_0^2(y_*(x,t){\pm})-\frac{M}{2}m_0(y_*(x,t){\pm})\nonumber\\[1mm]
&=\frac{1}{2}m^2(x,t)-\frac{M}{2}m(x,t).\label{3.4}
\end{align}
Then
\begin{equation}\label{omegaxxt}
\omega_x(x,t)=\frac{1}{\tau} q(x,t)+\frac{1}{2}m^2(x,t)-\frac{M}{2}m(x,t).
\end{equation}

Moreover, by the BV chain rule, it is direct to check that 
\begin{equation}\label{BVm2}
{\rm d}\Big(\frac{1}{2}m^2(x,t)-\frac{M}{2}m(x,t)\Big)=\widetilde{m}(x,t)\,{\rm d}m(x,t),
\end{equation}
which, by combining $\eqref{qeum}$ and $\eqref{thetaqEw}$ with $\eqref{omegaxxt}$, implies
\begin{align*}
&\ \iint \psi_tu+\psi_xu^2 \,{\rm d}m {\rm d}t-\iint (\widetilde{m}+\frac{u}{\tau})\psi \,{\rm d}m {\rm d}t\\[1mm]
&=\iint \psi_t \,{\rm d}q {\rm d}t+\psi_x \,{\rm d}E {\rm d}t
-\iint \psi \,{\rm d}\Big(\frac{1}{\tau}q(x,t)+\frac{1}{2}m^2(x,t)-\frac{M}{2}m(x,t)\Big) {\rm d}t\\[1mm]
&={-}\iint \psi_{xt}q+\psi_{xx}(E+\omega) \,{\rm d}x {\rm d}t\\[1mm]
&={-}\iint -\psi_{xt} \theta_x+\psi_{xx}\theta_t \,{\rm d}x {\rm d}t
=0. 
\end{align*}

To prove $(m,u)$ is an entropy solution, 
it remains to show that $m_x$, $um_x$, and $u^2 m_x$ are weakly continuous at $t=0$.
The remaining proof is divided into three steps accordingly.

\smallskip
\noindent
{\bf 1.} We first prove the weak continuity of $m_x$ at $t=0$. 
Since $m_0(x)$ is continuous except at most countable points,
we only need to study the $x$ at which $m_0(x)$ is continuous. 
If there exists $t_0$ such that 
$c(y_*(x,t);x,t)>u_0(y_*(x,t))$ with $c(y_*(x,t){+};x,t)>U_0$ for any $t \in (0,t_0]$, then  
$x\in \mathbb{R}\backslash {\rm spt}\{\rho_0\}$.
For $t<t_0$, we have
$$m(x,t)=m_0(y_*(x,t){+})=m_0(x).$$
The case that $c(y_*(x,t);x,t)<u_0(y_*(x,t))$ with $c(y_*(x,t){+};x,t)<{-}U_0$ for any $t \in (0,t_0]$ is similar to above.

If there exists $t_0$ such that for any $t\in (0,t_0]$, $c(y_*(x,t);x,t)> u_0(y_*(x,t))$ but $c(y_*(x,t){+};x,t)\le U_0$, or $c(y_*(x,t);x,t)< u_0(y_*(x,t))$ but $c(y_*(x,t){+};x,t)\ge- U_0$, or $c(y_*(x,t);x,t)= u_0(y_*(x,t))$, we have 
$y_*(x,t) \rightarrow x$ as $t \rightarrow 0{+}$. 
This, by $\eqref{meq}$, implies  
$$m(x,t) \rightarrow m_0(x)\qquad {\rm as}\ t \rightarrow 0{+}.$$  
Thus, we have proved that 
$$m(x,t) \rightarrow m_0(x)\qquad {\it a.e.}\quad {\rm as}\ t\rightarrow 0{+}.$$ 
This means the weak continuity of $m_x$ at $t=0$. 

\smallskip
\noindent
{\bf 2.} By the same arguments as in Step 1,
it is direct to check that $q_x=u m_x$ is weakly continuous at $t=0$.

\smallskip
\noindent
{\bf 3.} We now prove the weak continuity of $E_x=u^2 m_x$ at $t=0$.
By $\eqref{Eeq}$, it suffices to show that, as $t \rightarrow 0{+}$, 
$u(x(\eta,t),t) \rightarrow u_0(\eta)$ almost everywhere with respect to measure $\rho_0$.
We only consider the Lebesgue points of $u_0(\eta)$ with respect to $\rho_0$.
In fact, for $\eta_0$ satisfies that, for $\eta_1<\eta_0<\eta_2$,  
\begin{equation}\label{u0etam}
u_0(\eta_0)=\lim_{\eta_2,\eta_1 \rightarrow \eta_0 {\pm}}\frac{\int_{\eta_1{-}}^{\eta_2{+}}u_0(\eta) \,{\rm d}m_0(\eta)}{\int_{\eta_1{-}}^{\eta_2{+}} \,{\rm d}m_0(\eta)},
\end{equation}
we need to show 
\begin{equation}\label{uxetatt}
u(x(\eta_0,t),t) \rightarrow u_0(\eta_0)\qquad {\rm as}\ t \rightarrow 0{+}.
\end{equation}

Since $x(\eta,t)$ is increasing in $\eta$, then $x(\eta{-},t)\leq x(\eta{+},t)$.
If $x(\eta{-},t)=x(\eta{+},t)$, then for $\epsilon>0$, $x(\eta{-}\epsilon,t)<x(\eta,t)<x(\eta{+}\epsilon,t)$ holds for $t$ sufficiently small.
In \eqref{qqmmu}, let $x_0=x(\eta,t)$, $x_1=x(\eta{-}\epsilon,t)$ and $x_2=x(\eta{+}\epsilon,t)$, then
\begin{align}
\lim_{t\rightarrow0{+}}u(x(\eta,t),t)
&=\lim_{t\rightarrow0{+}}\lim_{\epsilon\rightarrow0}\frac{q(x(\eta{+}\epsilon,t),t)-q(x(\eta{-}\epsilon,t),t)}{m(x(\eta{+}\epsilon,t),t)-m(x(\eta{-}\epsilon,t),t)}\nonumber\\[1mm]
&=\lim_{\epsilon\rightarrow0}\lim_{t\rightarrow0{+}}\frac{q(x(\eta{+}\epsilon,t),t)-q(x(\eta{-}\epsilon,t),t)}{m(x(\eta{+}\epsilon,t),t)-m(x(\eta{-}\epsilon,t),t)}\nonumber\\[1mm]
&=\lim_{\epsilon\rightarrow0}\frac{q_0(\eta+\epsilon)-q_0(\eta{-}\epsilon)}{m_0(\eta{+}\epsilon)-m_0(\eta{-}\epsilon)}\nonumber\\[1mm]
&=u_0(\eta),\label{3.5a}
\end{align}
where the second and third equals are due to the weak continuity of $m_x$ and $u m_x$, the last equal is due to \eqref{u0etam}.

If $x(\eta{-},t)<x(\eta{+},t)$ for $t$ sufficiently small, assume $[m_0(\eta)]>0$, then $\nu(x,t)=F(\eta;x,t)$ for $x(\eta{-},t)\leq x \leq x(\eta,t)$ and $\nu(x,t)=F(\eta{+};x,t)$ for $x(\eta,t)< x \leq x(\eta{+},t)$, so that the support of $\rho$ is concentrated on $x(\eta,t)$. Similar to \eqref{3.5a}, \eqref{uxetatt} holds.

\smallskip
Up to now, we have completed the proof of Theorem $\ref{ExisThm}$.

\section{Proof of the Formula for drift equations}

In this section, we construct the mass $\bar{m}(x,t)$ and momentum $\bar{q}(x,t)$ for drift equations, and show their relations with the minimum value $\bar{\nu}(x,t)$, by which the drift equations is proved.

\vspace{10pt}

For any fixed $(x,t)\in \mathbb{R}\times\mathbb{R}^+$, it follows from the left continuity of $\bar{F}(\cdot;x,t)$ with respect to $y$ that, for any $y_0\in \bar{S}(x,t)$,
\begin{align}\label{4.1}
\bar{\nu}(x,t)=
\begin{cases}
\bar{F}(y_0;x,t) & {\rm if}\ \bar{F}(y;x,t)\ {\rm achieves \ its \ minimum \ at}\ y_0, \\[1mm]
\bar{F}(y_0{+};x,t) & {\rm otherwise}.
\end{cases}
\end{align}
If $y_0\in \bar{S}(x,t)$ and $[m_0(y_0)]=m_0(y_0{+})-m_0(y_0{-})>0$, then similar to the proof of Lemma \ref{nueq}, 
\begin{align}\label{4.2}
\bar{\nu}(x,t)=
\begin{cases}
\bar{F}(y_0;x,t)  &{\rm if}\ \frac{x-y_0}{t}\leq -\widetilde{m}_0(y_0),\\[1mm]
\bar{F}(y_0{+};x,t)  &{\rm if}\ \frac{x-y_0}{t}> -\widetilde{m}_0(y_0).
\end{cases}
\end{align}
We define the cumulative mass and momentum respectively as follows:
\begin{align}\label{2.12m2}
\bar{m}(x,t):=
\begin{cases}
\int_{{-}\infty}^{\bar{y}_*(x,t){-}} \,{\rm d}m_0(\eta) \quad
& {\rm if}\, \bar{\nu}(x,t)=\bar{F}(y_*(x,t),x,t),\\[1mm]
\int_{{-}\infty}^{\bar{y}_*(x,t){+}} \,{\rm d}m_0(\eta) \quad
& {\rm otherwise},
\end{cases}
\end{align}
\begin{align}\label{2.12q2}
\bar{q}(x,t):=
\begin{cases}
\int_{{-}\infty}^{\bar{y}_*(x,t){-}}-\widetilde{m}_0(\eta) \,{\rm d}m_0(\eta) \quad
& {\rm if}\, \bar{\nu}(x,t)=\bar{F}(y_*(x,t),x,t),\\[1mm]
\int_{{-}\infty}^{\bar{y}_*(x,t){+}} -\widetilde{m}_0(\eta) \,{\rm d}m_0(\eta) \quad
& {\rm otherwise}.
\end{cases}
\end{align}
To prove the drift equations, the key point is the following relations among $\bar{\nu}(x,t)$, $\bar{m}(x,t)$, and $\Bar{q}(x,t)$. 

\begin{Lem}\label{4.3}
Let the mass $\bar{m}(x,t)$ and momentum $\bar{q}(x,t)$ be defined by $\eqref{2.12m2}$--$\eqref{2.12q2}$.
Then, $\bar{\nu}(x,t)$ as in $\eqref{2.10}$ is Lipschitz continuous and satisfies the following properties{\rm:}
\begin{itemize}
\item [(i)] For any $t>0$ and $x_1,x_2\in\mathbb{R}$,
\begin{equation}\label{4.4}
\int_{x_1}^{x_2}\bar{m}(x,t)\,{\rm d}x=\bar{\nu}(x_1,t)-\bar{\nu}(x_2,t).
\end{equation}

\item [(ii)] For any $x\in\mathbb{R}$ and $t_2>t_1>0$,
\begin{equation}\label{4.5}
\int_{t_1}^{t_2}\bar{q}(x,t)\,{\rm d}t=\bar{\nu}(x,t_2)-\bar{\nu}(x,t_1).
\end{equation}
\end{itemize}
\end{Lem}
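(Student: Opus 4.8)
The plan is to mirror the proof of Lemma \ref{mqnulem}, exploiting the crucial simplification that $\bar{F}(y;x,t)$ defined in \eqref{2.9} is \emph{affine} in both parameters $x$ and $t$ for each fixed $y$. Indeed $\partial_x\bar{F}(y;x,t)=-\int_{-\infty}^{y-}{\rm d}m_0(\eta)$ and $\partial_t\bar{F}(y;x,t)=-\int_{-\infty}^{y-}\widetilde{m}_0(\eta)\,{\rm d}m_0(\eta)$ are both independent of $x$ and $t$, which makes all the difference quotients below exact rather than approximate. I first record that $\bar{\nu}(x,t)$ is continuous and $\bar{S}(x,t)$ is closed by the same argument as Lemma \ref{convergelem}; together with the monotonicity \eqref{2.12} this legitimizes the selection \eqref{4.1} of minimizers.

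For part (i), I fix $t$ and two points $x<x'$, and choose representations $\bar{\nu}(x,t)=\bar{F}(\bar{y}_*;x,t)$ and $\bar{\nu}(x',t)=\bar{F}(\bar{y}_*'{+};x',t)$ as in \eqref{4.1}, writing $\bar{y}_*=\bar{y}_*(x,t)$, $\bar{y}_*'=\bar{y}_*(x',t)$. Decomposing $\bar{\nu}(x,t)-\bar{\nu}(x',t)$ exactly as in \eqref{nuxeq} and inserting the two one-sided inequalities $\bar{\nu}(x,t)-\bar{F}(\bar{y}_*'{+};x,t)\le 0$ and $\bar{F}(\bar{y}_*;x',t)-\bar{\nu}(x',t)\ge 0$, the affine $x$-dependence collapses the surviving differences via $\bar{F}(\bar{y}_*;x,t)-\bar{F}(\bar{y}_*;x',t)=(x'-x)\bar{m}(x,t)$ and the matching identity at $\bar{y}_*'{+}$ (both immediate from \eqref{2.12m2}), giving the exact analogue of \eqref{nuxeqneq},
\[
(x'-x)\bar{m}(x,t)\le \bar{\nu}(x,t)-\bar{\nu}(x',t)\le (x'-x)\bar{m}(x',t).
\]
Integrating this squeeze over $(x_1,x_2)$ yields \eqref{4.4}.

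For part (ii) the argument is even cleaner than in Lemma \ref{mqnulem}: because $\bar{F}$ is exactly linear in $t$, the expansions \eqref{Fyxtpri}--\eqref{Fyxxtpri1} carry no $O((t-t')^2)$ remainder. Fixing $x$ and $t'<t$ and selecting minimizers at $\bar{y}_*=\bar{y}_*(x,t)$ and $\bar{y}_*''=\bar{y}_*(x,t')$, I obtain the exact identities $\bar{F}(\bar{y}_*;x,t)-\bar{F}(\bar{y}_*;x,t')=(t-t')\bar{q}(x,t)$ and $\bar{F}(\bar{y}_*''{+};x,t)-\bar{F}(\bar{y}_*''{+};x,t')=(t-t')\bar{q}(x,t')$ directly from \eqref{2.12q2}. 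Squeezing $\bar{\nu}(x,t)-\bar{\nu}(x,t')$ between these two quantities and integrating over $(t_1,t_2)$ gives \eqref{4.5}. The Lipschitz continuity of $\bar{\nu}$ then follows from these two squeezes together with the uniform bounds $\bar{m}(x,t)\in[0,M]$ and $|\bar{q}(x,t)|\le \tfrac{M^2}{2}$, the latter since $\widetilde{m}_0\in[-\tfrac{M}{2},\tfrac{M}{2}]$.

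I expect the main obstacle to be organizational rather than analytic: one must verify that the left/right choices of minimizer representation in \eqref{4.1} are consistent so that the two one-sided inequalities point in the correct directions, precisely the bookkeeping already carried out in Lemma \ref{mqnulem}. The affine structure of $\bar{F}$ eliminates every error term, so once this selection is handled the identities are exact and the final integrations are entirely routine.
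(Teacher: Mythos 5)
Your proposal is correct and follows essentially the same route as the paper: part (i) is the squeeze argument of Lemma \ref{mqnulem} transplanted verbatim, and part (ii) uses exactly the paper's identities $\bar{F}(\bar{y}_*;x,t)-\bar{F}(\bar{y}_*;x,t')=(t-t')\bar{q}(x,t)$ and $\bar{F}(\bar{y}_*'{+};x,t)-\bar{F}(\bar{y}_*'{+};x,t')=(t-t')\bar{q}(x,t')$, with the linearity of $\bar{F}$ in $t$ eliminating the remainder terms, followed by integration. Your explicit observation about the affine structure and the bounds $\bar{m}\in[0,M]$, $|\bar{q}|\le \tfrac{M^2}{2}$ for Lipschitz continuity only makes explicit what the paper leaves implicit.
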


\begin{proof}
We only prove \eqref{4.5}, since the proof of \eqref{4.4} is the same as \eqref{mnueq}.

Denote $\bar{y}_*:=\bar{y}_*(x,t)$ and $\bar{y}_*':=\bar{y}_*(x,t')$.
Without loss of generality, assume that $\bar{\nu}(x,t)=\bar{F}(y_*;x,t)$
and $\bar{\nu}(x,t')=\bar{F}(y_*'{+};x,t')$.
Similar to \eqref{nuxeqneq}, $\bar{\nu}(x,t)-\nu(x,t')$ lies between
\begin{align}\label{4.6}
\bar{F}(\bar{y}_*;x,t)-\bar{F}(\bar{y}_*;x,t')=(t-t')\int_{-\infty}^{\bar{y}_*{-}} -\widetilde{m}_0(\eta)\,{\rm d}m_0(\eta)=(t-t')\bar{q}(x,t),
\end{align}
and
\begin{align}\label{4.7}
\bar{F}(\bar{y}_*^{\prime}{+};x,t)-\bar{F}(\bar{y}_*^{\prime}{+};x,t')=(t-t')\int_{-\infty}^{\bar{y}_*^{\prime}{+}}-\widetilde{m}_0(\eta)\,{\rm d}m_0(\eta)=(t-t')\bar{q}(x,t').
\end{align}
By the arbitrariness of $t$ and $t'$, \eqref{4.5} follows from \eqref{4.6} and \eqref{4.7} by integrating over $(t_1,t_2)$.
\end{proof}

{\it Proof of Theorem \ref{Ex2}.} 

Since $-\widetilde{m}_0(\eta)\,{\rm d}m_0(\eta)=-{\rm d}\big(\frac{1}{2}m_0^2(\eta)-\frac{M}{2}m_0(\eta)\big)$ in the sense of BV derivative,
then from \eqref{2.12m2}--\eqref{2.12q2},
\begin{align}
\bar{q}(x,t)&=\int_{-\infty}^{\bar{y}_*(x,t){\pm}} -\widetilde{m}_0(\eta)\,{\rm d}m_0(\eta)=-\frac{1}{2}m_0^2(\bar{y}_*(x,t){\pm})+\frac{M}{2} m_0(y_*(x,t){\pm})\nonumber\\[1mm]
&=-\frac{1}{2}\bar{m}^2(x,t)+\frac{M}{2}\bar{m}(x,t),\nonumber
\end{align}
which, by \eqref{2.6}, implies that
\begin{align}\label{4.8}
\partial_x \bar{q}=\partial_x \big(-\frac{1}{2}\bar{m}^2+\frac{M}{2}\bar{m}\big)=-\widehat{m}\, \partial_x \bar{m},
\end{align}
in the sense of BV derivative.
From Lemma $\ref{4.3}$,  $\bar{\nu}_x=-\bar{m}(x,t)$ and $\bar{\nu}_t=\bar{q}(x,t)$ in the sense of distribution. 
Then for any $\varphi \in C_c^{\infty}(\mathbb{R}\times\mathbb{R}^+)$,
\begin{align*}
\iint \varphi_t \bar{m} \,{\rm d}x{\rm d}t+\iint \varphi \widehat{m} \,{\rm d}\bar{m}{\rm d}t
&=\iint \varphi_t \bar{m} \,{\rm d}x{\rm d}t-\iint \varphi\,{\rm d}\bar{q}{\rm d}t\nonumber\\[1mm]
&=\iint -\varphi_t \bar{\nu}_x+\varphi_x \bar{\nu}_t \,{\rm d}x{\rm d}t
=0.
\end{align*}

Now we prove the measure $\bar{\rho}$ weakly converges to $\rho_0$ as $t\rightarrow 0+$. 
It is sufficient to show that for $x$ at which $m_0(x)$ is continuous, $\bar{m}(x,t)\rightarrow m_0(x)$ as $t\rightarrow 0+$.
First, for $x\in {\rm spt}\{\rho_0\}$, by \eqref{2.9a}, 
\begin{align*}
x-\frac{t M}{2}\leq \bar{y}_*(x,t) \leq x+\frac{t M}{2},
\end{align*}
which implies $\bar{y}_*(x,t)\rightarrow x$ as $t \rightarrow 0+$, so that $\bar{m}(x,t)\rightarrow m_0(x)$.
If $x\in \mathbb{R}\backslash {\rm spt}\{\rho_0\}$, then there exists open interval $(a,b)\subset \mathbb{R}\backslash {\rm spt}\{\rho_0\}$ with $a,b \in {\rm spt}\{\rho_0\}$ and $x\in (a,b)$.
Since $\widetilde{m}_0(\eta)\in [-\frac{M}{2},\frac{M}{2}]$ and 
\begin{align*}
\bar{F}(y;x,t)=t\int_{-\infty}^{y-}-\widetilde{m}_0(\eta)-\frac{x-\eta}{t}\,{\rm d}m_0(\eta),
\end{align*}
then $\frac{x-a}{t}>\frac{M}{2}$ and $\frac{x-b}{t}<-\frac{M}{2}$ as $t$ sufficiently small,
which yields $\bar{y}_*(x,t)=a$ and $\bar{\nu}(x,t)=\bar{F}(a{+};x,t)$.
Thus $\bar{m}(x,t)=m_0(a{+})=m_0(x)$. 
The weak continuity of $\bar{\rho}$ at $t=0$ is proved.

\section{Proof of the Relaxation limit}
This section is devoted to the proof of Theorem \ref{thm3}.
First, by slow time scaling, we show that the minimum points of $F^{\tau}(y;x,t)$ converge to the minimum points of $\bar{F}(y;x,t)$. 
Since there exists only one minimum point for almost all $(x,t)$, then the mass $m^{\tau}(x,t)$ of pressureless Euler--Poisson equations converges to the mass $\bar{m}(x,t)$ of drift equations almost everywhere as the relaxation time $\tau\rightarrow 0$, which implies the weak convergence from $\rho^\tau$ to $\bar{\rho}$.
Similarly, the momentum $q(x,t)$ converges to $\bar{q}(x,t)$ almost everywhere.
By the Radon-Nikodym derivatives $u(x,t)=\frac{q_x}{m_x}$, the velocity $u(x,t)\rightarrow \bar{u}(x,t)$ with respect to measure $\bar{m}_x$ as $\tau \rightarrow 0$. 

\vspace{10pt}

By slow time scaling, $F(y;x,t)$ given by \eqref{potentialF} transforms into
\begin{align}\label{2.14}
F^{\tau}(y;x,t){:=}F(y;x,\frac{t}{\tau}){=}\int_{-\infty}^{y-}\eta{+}u_0(\eta)(\tau{-}\tau e^{-\frac{t}{\tau^2}}){+}\widetilde{m}_0(\eta)(\tau^2-\tau^2 e^{-\frac{t}{\tau^2}}{-}t){-}x\,{\rm d}m_0(\eta),
\end{align}
and the minimum point \eqref{y*eq} transforms into
\begin{align}
y_*^{\tau}(x,t):=y_*(x,\frac{t}{\tau}).
\end{align}
As $\tau \rightarrow 0+$, by \eqref{2.9}, it is obvious that
\begin{align}
\lim_{\tau \rightarrow 0+}F^{\tau}(y;x,t)=\bar{F}(y;x,t).
\end{align}
For $(x,t)\in {\rm spt}\{\rho\}$ with $\rho=m_x$, it follows from \eqref{2.1b} that, after slow time scaling,
\begin{align}\label{2.15}
x{-}U_0(\tau{-}\tau e^{-\frac{t}{\tau^2}}){+}\frac{M}{2}(\tau^2{-}\tau^2 e^{-\frac{t}{\tau^2}}{-}t)
\leq y_*^{\tau}(x,t) 
\leq x{+}U_0(\tau{-}\tau e^{-\frac{t}{\tau^2}}){-}\frac{M}{2}(\tau^2{-}\tau^2 e^{-\frac{t}{\tau^2}}{-}t),
\end{align}
since the left-hand side and the right-hand side of \eqref{2.15} converge to $x-\frac{t M}{2}$ and $x+\frac{t M}{2}$ respectively, 
then $\{y_*^{\tau}(x,t)\}$ is bounded as $\tau$ sufficiently small, 
so that there exists a convergent subsequence (still denoted as $y_*^{\tau}(x,t)$) with the limits denoted as
\begin{align}\label{2.16}
\hat{y}(x,t):=\lim_{\tau\rightarrow 0+}y_*^{\tau}(x,t).
\end{align}
It follows from $y_*(x,t)\in S(x,t)$ that $F^{\tau}(y_*^{\tau}(x,t){\pm};x,t)\leq F^{\tau}(y;x,t)$ holds for any $y\in\mathbb{R}$.
Let $\tau\rightarrow 0+$, by the left-continuity of $F$ with respect to $y$, we have 
\begin{align*}
\bar{F}(\hat{y}(x,t){\pm};x,t)\leq \bar{F}(y;x,t), \qquad {\rm for\ any} \ y\in\mathbb{R},
\end{align*}
which implies 
\begin{align}\label{2.17}
\hat{y}(x,t)\in \bar{S}(x,t) \cap {\rm spt}\{\rho_0\},
\end{align}
where $\bar{S}(x,t)$ is given in \eqref{2.10}.

From \eqref{2.12}, for fixed $t>0$ and any $x_1<x_2$,
\begin{align*}
\big(\bar{y}_*(x_1,t),\bar{y}^*(x_1,t)\big) \cap \big(\bar{y}_*(x_2,t),\bar{y}^*(x_2,t)\big)=\varnothing,
\end{align*}
which implies that there exists at most countable point $x$ satisfying $\bar{y}_*(x,t)<\bar{y}^*(x,t)$.
Thus for almost everywhere $x\in\mathbb{R}$, we have $\bar{y}_*(x,t)=\bar{y}^*(x,t)$.

In order to prove the measure $\rho^{\tau}(x,t)$ weakly converges to measure $\bar{\rho}(x,t)$ for fixed $t$ as $\tau \rightarrow 0+$, 
it is sufficient to show $m^{\tau}(x,t)\rightarrow \bar{m}(x,t)$ at $x$ which satisfies $\bar{y}_*(x,t)=\bar{y}^*(x,t)$.

For $(x,t)$ satisfying $\bar{y}_*(x,t)=\bar{y}^*(x,t)$, \eqref{2.17} yields $\hat{y}(x,t)=\bar{y}_*(x,t)$.
Without loss of generality, assume $\bar{\nu}(x,t)=\bar{F}(\bar{y}_*(x,t),x,t)$,
then it follows from \eqref{meq}, \eqref{qeq}, \eqref{2.12m}, and \eqref{2.12q} that, as $\tau\rightarrow 0+$,
\begin{align}
&m^{\tau}(x,t):=m(x,\frac{t}{\tau})=\int_{-\infty}^{y_*^{\tau}(x,t){\pm}}\,{\rm d}m_0(\eta)\longrightarrow \int_{-\infty}^{\hat{y}(x,t){-}}\,{\rm d}m_0(\eta)=\bar{m}(x,t),\label{2.18}
\end{align}
which implies the weak convergence from $\rho^{\tau}$ to $\bar{\rho}$.

It follows from \eqref{4.8} that, for $(x,t)\in {\rm spt}\{\bar{\rho}\}$ and in the sense of Radon-Nikodym derivative,
\begin{align}
\bar{u}(x,t):=-\widehat{m}(x,t)=\frac{\bar{q}_x}{\bar{m}_x}.
\end{align}
For $(x,t)$ satisfying $\bar{y}_*(x,t)=\bar{y}^*(x,t)$, similar to \eqref{2.18},
\begin{align}
&\frac{1}{\tau}q^{\tau}(x,t):=\frac{1}{\tau}q(x,\frac{t}{\tau})=\int_{-\infty}^{y_*^{\tau}(x,t){\pm}}u_0(\eta)\frac{1}{\tau} e^{-\frac{t}{\tau}}+\widetilde{m}_0(\eta)(e^{-\frac{t}{\tau}}-1)\,{\rm d}m_0(\eta)\nonumber\\[1mm]
&\longrightarrow \int_{-\infty}^{\hat{y}(x,t)-} -\widetilde{m}_0(\eta)\,{\rm d}m_0(\eta)=\bar{q}(x,t).\label{2.19}
\end{align}
For $(x,t)\in {\rm spt}\{\bar{\rho}\}$, we can choose $x_1<x<x_2$ with $x_1, x_2$ being continuous points of $\bar{m}$, 
then by $q_x=u m_x$ in \eqref{qeum},
\begin{align}
\lim_{\tau\rightarrow 0+}u^{\tau}(x,t)
&=\lim_{\tau\rightarrow 0+}\frac{1}{\tau}u(x,\frac{t}{\tau})
=\lim_{\tau\rightarrow 0+}\lim_{x_1,x_2\rightarrow x{\pm}}\frac{\frac{1}{\tau}q^{\tau}(x_2,t)-\frac{1}{\tau}q^{\tau}(x_1,t)}{m^{\tau}(x_2,t)-m^{\tau}(x_1,t)}\nonumber\\[1mm]
&=\lim_{x_1,x_2\rightarrow x{\pm}}\lim_{\tau\rightarrow 0+}\frac{\frac{1}{\tau}q^{\tau}(x_2,t)-\frac{1}{\tau}q^{\tau}(x_1,t)}{m^{\tau}(x_2,t)-m^{\tau}(x_1,t)}
=\lim_{x_1,x_2\rightarrow x{\pm}}\frac{\bar{q}(x_2,t)-\bar{q}(x_1,t)}{\bar{m}(x_2,t)-\bar{m}(x_1,t)}\nonumber\\[1mm]
&=\bar{u}(x,t).\label{2.20}
\end{align}

\section{Uniqueness of Entropy Solution to pressureless Euler--Poisson equations}
In this section, we call the entropy solution given by $\eqref{meq}$ and $\eqref{ueq}$ 
in Theorem \ref{ExisThm} to be the standard entropy solution and denote it by $(m^s,u^s)$. 
By Lemma $\ref{RNlem}$, 
$u^s=\frac{{\rm d}q^s}{{\rm d}m^s}$ in the sense of Radon--Nikodym derivatives,
where $q^s$ is defined by $\eqref{qeq}$. So we also call $(m^s,q^s)$ to be the standard entropy solution alternatively.

To prove the uniqueness for entropy solutions, we need to show that
any entropy solution $(m,u)$ must coincide with 
the standard entropy solution $(m^s,u^s)$.
The proof of the uniqueness for entropy solutions is divided into three parts:  
Firstly, for any fixed $t_1>0$, we prove that 
any entropy solution $(m,q)$ can be expressed by its value at $t=t_1$.
Secondly, by letting $t_1\rightarrow 0{+}$, 
we obtain the expressions of $(m,q)$ with respect to initial data in $\eqref{mx0t0}$--$\eqref{qx0t02}$,
then by proving $\xi(x,t)\in S(x,t)$, it follows from Lemma $\ref{sigmasol}$ that 
any entropy solution $(m,u)$ equals to the standard entropy solution $(m^s,u^s)$.

\begin{Lem}\label{sigmasol}
Let the standard entropy solution $(m^s$, $q^s)$ be given by $\eqref{meq}$ and $\eqref{qeq}$,
and for any $y_0 \in S(x,t)$, let $(m^{\sigma},q^{\sigma})$ be defined by 
\begin{eqnarray}
&&m^{\sigma}(x,t)=
\begin{cases}
\int_{{-}\infty}^{y_0{-}} {\rm d}m_0(\eta)
\qquad {\rm if} \ \nu(x,t)=F(y_0;x,t),\\[2mm]
\int_{{-}\infty}^{y_0{+}}{\rm d}m_0(\eta)\qquad {\rm otherwise},
 \end{cases}\label{meq2}\\[2mm]
&&\,\, q^{\sigma}(x,t)=
\begin{cases}
\int_{{-}\infty}^{y_0{-}}u_0(\eta) e^{-\frac{t}{\tau}}{+}\widetilde{m}_0(\eta)(\tau e^{-\frac{t}{\tau}}{-}\tau) \,{\rm d}m_0(\eta)
\quad {\rm if} \ \nu(x,t)=F(y_0;x,t),\\[2mm] 
\int_{{-}\infty}^{y_0{+}}u_0(\eta) e^{-\frac{t}{\tau}}{+}\widetilde{m}_0(\eta)(\tau e^{-\frac{t}{\tau}}{-}\tau) \,{\rm d}m_0(\eta)
\quad {\rm otherwise}.
\end{cases}\label{qeq2}
\end{eqnarray}
Then $m^s(x,t)=m^{\sigma}(x,t)$ and $q^s(x,t)=q^{\sigma}(x,t)$ almost everywhere, 
and $u^s(x,t)=\frac{{\rm d}q^s}{{\rm d}m^s}=\frac{{\rm d}q^{\sigma}}{{\rm d}m^{\sigma}}=u^{\sigma}(x,t)$ with respect to the measure $m_x^s=m_x^{\sigma}$.
\end{Lem}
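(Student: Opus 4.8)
The plan is to reduce the statement to the two scalar identities $m^s=m^\sigma$ and $q^s=q^\sigma$ holding almost everywhere; granting these, the equality of Radon--Nikodym derivatives is automatic. Indeed, $m^s=m^\sigma$ a.e. forces the generating measures to coincide, $m^s_x=m^\sigma_x$ (two nondecreasing functions equal a.e. share the same distributional derivative), and likewise $q^s=q^\sigma$ a.e. gives $q^s_x=q^\sigma_x$; hence $u^\sigma=\frac{{\rm d}q^\sigma}{{\rm d}m^\sigma}=\frac{{\rm d}q^s}{{\rm d}m^s}=u^s$ with respect to $m^s_x=m^\sigma_x$ by Lemma \ref{RNlem}.

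Comparing \eqref{meq2} with \eqref{meq} and \eqref{qeq2} with \eqref{qeq}, the two prescriptions differ only in the upper endpoint of integration: $(m^s,q^s)$ use $y_*(x,t)=\inf\{S(x,t)\cap{\rm spt}\{\rho_0\}\}$, whereas $(m^\sigma,q^\sigma)$ use an arbitrary minimizer $y_0\in S(x,t)$, the choice of $\pm$ being dictated in both cases by \eqref{nuxt} and Lemma \ref{minlem}. Thus $m^\sigma-m^s$ is exactly the $m_0$--mass carried between $y_*$ and $y_0$, while $q^\sigma-q^s$ is the same mass integrated against the bounded weight $u_0(\eta)e^{-\frac{t}{\tau}}+\widetilde{m}_0(\eta)(\tau e^{-\frac{t}{\tau}}-\tau)$. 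It therefore suffices to prove that, for a.e. $(x,t)$, the minimizing set $S(x,t)$ carries no $m_0$--mass, so that relocating the endpoint among minimizers alters neither integral.

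First I would fix $t$ and note that $x\mapsto m(x,t)$ is nondecreasing, hence has at most countably many jumps; by \eqref{geoentropy} these occur precisely where $y_*(x,t)<y^*(x,t)$, and they are the only points at which $[m(x,t)]=\int_{y_*{-}}^{y^*{+}}{\rm d}m_0>0$ (Lemma \ref{velocitylem}). Consequently $[m(x,t)]=0$ for a.e. $x$, and so for a.e. $(x,t)$ by Fubini. At such a point no mass lies on $[y_*,y^*]$; combining this with \eqref{Yy*xt}, which yields no mass on $(y^*,Y]$ for $Y:=\sup\{S(x,t)\cap{\rm spt}\{\rho_0\}\}$, I obtain that $m_0$ is constant throughout the admissible range $[y_*,Y]\supseteq S(x,t)\cap{\rm spt}\{\rho_0\}$. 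Hence $\int_{-\infty}^{y_0{\pm}}{\rm d}m_0=\int_{-\infty}^{y_*{\pm}}{\rm d}m_0$ and the corresponding momentum integral is unchanged, giving $m^\sigma=m^s$ and $q^\sigma=q^s$ at every such $(x,t)$.

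The hard part will be the endpoint bookkeeping: the chosen minimizer $y_0$ may fall outside ${\rm spt}\{\rho_0\}$, or a concentrated mass may sit exactly at the boundary point $Y$, so one must check that the $\pm$ selections attached to $y_0$ and to $y_*$ land on the same side. Here Lemma \ref{minlem} is decisive, since it fixes the choice between $F(y_0;x,t)$ and $F(y_0{+};x,t)$ solely through the sign of $c(y_0;x,t)-u_0(y_0)$, a criterion consistent along all minimizers realizing the common value $\nu(x,t)$; away from the countable exceptional set the two selections coincide and no boundary mass is counted twice. With $m^s=m^\sigma$ and $q^s=q^\sigma$ established a.e., I would then pass to the distributional $x$--derivatives and apply Lemma \ref{RNlem} to conclude $u^s=u^\sigma$ with respect to $m^s_x=m^\sigma_x$, which finishes the proof.
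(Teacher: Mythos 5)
Your overall route is the same as the paper's: for fixed $t$ the ``fan'' points where $y_*(x,t)<y^*(x,t)$ are at most countable, so almost everywhere $y_*(x,t)=y^*(x,t)$; at such points one localizes the $\rho_0$--mass between $y_*(x,t)$ and the chosen minimizer $y_0$, concludes $m^s=m^\sigma$ and $q^s=q^\sigma$ a.e., and then passes to Radon--Nikodym derivatives via Lemma \ref{RNlem}. That reduction and the use of countability are fine. The gap is in your mass-localization step and in the endpoint bookkeeping that you yourself flag as ``the hard part''. From $[m(x,t)]=0$ you conclude that ``no mass lies on $[y_*,y^*]$'' and hence that $m_0$ is constant on $[y_*,Y]$; this is false for the closed interval. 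Take $\rho_0=M\delta_{\eta_0}$: then $y_*(x,t)=y^*(x,t)=\eta_0$ for \emph{every} $(x,t)$, $[m(x,t)]=0$ off the particle trajectory, yet all the mass sits exactly at $y_*$. (The same example shows that jumps of $m(\cdot,t)$ can occur where $y_*=y^*$, so your claim that jumps ``occur precisely where $y_*<y^*$'' is also wrong.) The paper's corresponding fact is deliberately stated for the \emph{open} interval: $\rho_0=0$ on $(y_*(x,t),y_0)$; atoms at the endpoints are not excluded and must be absorbed by the $\pm$ selection rules in \eqref{meq2}--\eqref{qeq2}.

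This matters because your resolution of the endpoint issue is the opposite of what actually happens. In the Dirac example with $x$ to the right of the trajectory and $y_0>\eta_0$, $y_0\in S(x,t)$, one has $\nu(x,t)\neq F(y_*;x,t)$ but $\nu(x,t)=F(y_0;x,t)$, so $m^s$ is computed with the endpoint $y_*{+}$ while $m^\sigma$ is computed with $y_0{-}$; the two values agree ($=M$) precisely \emph{because} the two selections land on opposite sides, contradicting your claim that ``the two selections coincide and no boundary mass is counted twice''. Moreover, since $y_0$ need not lie in ${\rm spt}\{\rho_0\}$, it can exceed $Y=\sup\{S(x,t)\cap{\rm spt}\{\rho_0\}\}$, so \eqref{Yy*xt} alone does not cover the range of integration. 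Finally, the genuinely delicate case — atoms at both $y_*$ and $y_0$ with the exact cancellation $[m_0(y_*)]\big(u_0(y_*)-c(y_*;x,t)\big)+[m_0(y_0)]\big(u_0(y_0)-c(y_0;x,t)\big)=0$, where $m^s$ and $m^\sigma$ really do differ — must be ruled out by noting that, for each fixed $t$ and each pair of atoms, this equation pins down a single $x$, hence a countable exceptional set; no argument of this kind appears in your proposal. So the proposal, as written, asserts the crucial step rather than proving it, and the asserted mechanism is refuted by the simplest singular initial datum.
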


\begin{proof}
Lemma $\ref{sigmasol}$ follows from the fact that:
For any fixed $t>0$, the points $(x,t)$ satisfying $y_*(x,t)<y^*(x,t)$ are at most countable; 
and for the case that $y_*(x,t)=y^*(x,t)$, 
it holds that $\rho_0=0$ on $(y_*(x,t),y_0)$ if $y_*(x,t)<y_0$.
\end{proof}

\subsection{Formula of entropy solution with respect to value in $t=t_1$.}
Assume that $(\rho,u)=(m_x,u)$ is an entropy solution in the sense of Definition $\ref{entropylem}$. 
In order to get the formula of $(m(x,t), u(x,t))$ with respect to the data $(m(\eta,t_1),u(\eta,t_1))$, we need to consider the following Cauchy problem in $(x,t)\in \mathbb{R}\times [t_1,T]$,
\begin{equation}\label{mmaineq2}
\begin{cases}
m_t+um_x=0,\\[1mm]
(m_xu)_t+(m_x u^2)_x=-\widetilde{m}m_x-\frac{u m_x}{\tau},\\[1mm]
(m,u)|_{t=t_1}=(m(\eta,t_1),u(\eta,t_1)).
\end{cases}
\end{equation}
The proof is divided into three steps.

\smallskip
\noindent
{\bf 1.} We first show that the solution $m(x,t)$ of the Cauchy problem $\eqref{mmaineq2}$ 
can be expressed by its value $m(\eta,t_1)$ as in $\eqref{umeq}$. 
Obviously, $m(x,t)$ satisfies the following linear transport equation
\begin{equation}\label{scalareq}
\begin{cases}
m_t+um_x=0,\\[1mm]
m|_{t=t_1}=m(\eta,t_1).
\end{cases}
\end{equation}

Similar to Definition $\ref{weakdef}$, by using the Lebesgue-Stieltjes integral,
the definition of weak solutions of the Cauchy problem $\eqref{scalareq}$ 
is given as follows ({\it cf.} \cite{huang2001well}):
\begin{Def}\label{sweaklem}
Let $m(x,t) \in [0,M]$ is increasing in $x$ and the measure $m_x$ is weakly continuous with respect to $t$. 
Then, $m(x,t)$ is called to be a weak solution of $\eqref{scalareq}$ if, 
for any $\psi \in C_c^{\infty}(\mathbb{R}^2_+)$, 
\begin{equation}\label{psimu}
\iint \psi_tm \,{\rm d}x {\rm d}t-\iint \psi u \,{\rm d}m {\rm d}t=0.
\end{equation}
\end{Def}

\smallskip
Denote $U:=||u(x,t)||_{L^{\infty}}$.
Let $u_{\epsilon}:=u*j_{\epsilon}$ for $u$ satisfying Oleinik condition $\eqref{ux2ux1}$, 
where $j_{\epsilon}$ for $\epsilon>0$ is a standard mollifier.
Then
\begin{equation}\label{uepsilon}
u_{\epsilon} \leq U,\quad \quad u_{\epsilon x} \leq \frac{1}{t}.
\end{equation}
Furthermore, let $x(t):=X_{t_1}^{\epsilon}(\xi ,t)$ solve the following ODE:
\begin{equation}\label{dxdt}
\frac{{\rm d}x(t)}{{\rm d}t}=u_{\epsilon},\quad x(t_1)=\xi\in\mathbb{R}.
\end{equation}

Then, we have the following lemma as presented in \cite{WD1997}.

\begin{Lem}\label{reflem}
The characteristics defined by $\eqref{dxdt}$ satisfy the following statements{\rm:}
\begin{itemize}
\item[(i)]  There exists a subsequence $\{\epsilon_i\}$ of $\epsilon>0$ such that, 
for any $\xi\in\mathbb{R}$,
\begin{equation}\label{XXt1}
\lim_{\epsilon_i \rightarrow 0}X_{t_1}^{\epsilon_i}(\xi,t)=X_{t_1}(\xi,t).
\end{equation}
 Furthermore, $X_{t_1}(\xi,t)$ is Lipschitz continuous with respect to $t$.

\vspace{2pt}
\item[(ii)] If $X_{t_1}(\xi_1,t_0)=X_{t_1}(\xi_2,t_0)$ holds for some $\xi_1<\xi_2$ and $t_0 \geq t_1>0$, then 
$$X_{t_1}(\xi_1,t)=X_{t_1}(\xi_2,t)\qquad {\rm for\ any}\ t\in [t_0,\infty).$$ 
Furthermore, $X_{t_1}(\xi,t)$ for $\xi \in \mathbb{R}$ cover the strip $\{t\geq t_1\}$.

\vspace{2pt}
\item[(iii)] If $0<t_2<t_1$, then $X_{t_2}(\xi,t)=X_{t_1}(X_{t_2}(\xi,t_1),t)$ holds for any $t\geq t_1$.

\vspace{2pt}
\item[(iv)]  Let $X(\xi,t)$ defined as $\eqref{XXt1}$ with $t_1=0$, and denote
\begin{equation}\label{mathcalU}
\mathcal{U}=\{\xi\,:\, \exists t>0\ s.t.\ X(\xi{-}0,t) \neq X(\xi{+}0,t)\}.
\end{equation}
Then, for any $\xi \in \mathbb{R} \backslash \mathcal{U}$ and almost all $t>0$, 
\begin{align}\label{6.1}
\frac{\partial X(\xi,t)}{\partial t}=u(X(\xi,t),t).
\end{align}

\vspace{2pt}
\item[(v)] Let $\xi_{t_1}(x,t)$ and $\eta_{t_1}(x,t)$ be given by
\begin{equation}\label{xietat1}
\xi_{t_1}(x,t):=\sup \{\xi:X_{t_1}(\xi,t)<x\}\qquad \eta_{t_1}(x,t):=\inf \{\xi:X_{t_1}(\xi,t)>x\}.
\end{equation}
Then the set
$\varGamma_{t_1}=\{(x,t):\xi_{t_1}(x,t) \neq \eta_{t_1}(x,t), t\geq t_1\}$ consists of at most countable Lipschitz continuous curves. Such curves are called to be $\delta$-shock waves of $u(x,t)$. Furthermore, 
\begin{align}
&\xi_{t_1}(x{-},t)=\xi_{t_1}(x,t),\quad \eta_{t_1}(x,t)=\xi_{t_1}(x{+},t)=\eta_{t_1}(x{+},t),\label{6.2a}\\[1mm]
&X(\xi(x,t){-},t)\leq x \leq X(\xi(x,t){+},t). \label{6.2b}
\end{align}
\end{itemize}
\end{Lem}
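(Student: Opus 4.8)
The plan is to realize the limiting generalized characteristics as subsequential limits of the smooth flow $X_{t_1}^{\epsilon}$ and then read off all five statements from the two bounds in \eqref{uepsilon}: the uniform velocity bound $u_{\epsilon}\le U$ governs the $t$-regularity and supplies compactness, whereas the one-sided Lipschitz bound $u_{\epsilon x}\le 1/t$ is the structural ingredient that both prevents crossing in the limit and forces merging. For (i), the ODE \eqref{dxdt} gives $|\partial_t X_{t_1}^{\epsilon}(\xi,t)|=|u_{\epsilon}|\le U$, so for each fixed $\xi$ the family $\{X_{t_1}^{\epsilon}(\xi,\cdot)\}_{\epsilon}$ is uniformly Lipschitz in $t$ on compact intervals. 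Taking a countable dense set of starting points, applying Arzel\`a--Ascoli, and extracting diagonally produces a subsequence $\{\epsilon_i\}$ along which \eqref{XXt1} holds; monotonicity in $\xi$ (automatic from uniqueness of the smooth ODE flow) extends the convergence to every $\xi$, and the Lipschitz constant $U$ survives in the limit.

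For (ii) I would control the gap $d_{\epsilon}(t):=X_{t_1}^{\epsilon}(\xi_2,t)-X_{t_1}^{\epsilon}(\xi_1,t)>0$ for $\xi_1<\xi_2$. Differentiating and inserting $u_{\epsilon x}\le 1/t$ yields $d_{\epsilon}'(t)\le d_{\epsilon}(t)/t$, so $d_{\epsilon}(t)/t$ is non-increasing and hence $d_{\epsilon}(t)\le (t/t_0)\,d_{\epsilon}(t_0)$ for all $t\ge t_0$. With $t_0=t_1$ this gives $|X_{t_1}(\xi_2,t)-X_{t_1}(\xi_1,t)|\le (t/t_1)|\xi_2-\xi_1|$ in the limit, so $X_{t_1}(\cdot,t)$ is Lipschitz (in particular continuous) in $\xi$; combined with $X_{t_1}(\xi,t)\to\pm\infty$ as $\xi\to\pm\infty$, the intermediate value theorem yields the covering of the strip $\{t\ge t_1\}$. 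The merging claim is the same estimate read at a later time: if $X_{t_1}(\xi_1,t_0)=X_{t_1}(\xi_2,t_0)$ then $d_{\epsilon}(t_0)\to 0$, whence $d_{\epsilon}(t)\to 0$ for every $t\ge t_0$, i.e. the two curves coincide thereafter.

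At fixed $\epsilon$ statement (iii) is immediate from uniqueness of the smooth flow, $X_{t_2}^{\epsilon}(\xi,t)=X_{t_1}^{\epsilon}(X_{t_2}^{\epsilon}(\xi,t_1),t)$, and passing to the limit is legitimate because the limit map $X_{t_1}(\cdot,t)$ is continuous in its base point by the Lipschitz estimate just obtained. Statement (v) is then a monotonicity bookkeeping: at fixed $t$, the locus $\xi_{t_1}(x,t)\ne\eta_{t_1}(x,t)$ records exactly those $x$ whose preimage under the monotone map $X_{t_1}(\cdot,t)$ is a nondegenerate interval $[\xi_{t_1},\eta_{t_1}]$; since such intervals are pairwise disjoint for distinct $x$ there are at most countably many, the Lipschitz-in-$t$ regularity of $X$ turns each shock location into a Lipschitz curve, and \eqref{6.2a}--\eqref{6.2b} are the one-sided-limit identities read directly from the definitions \eqref{xietat1}.

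The hard part will be (iv). Writing the integral form $X(\xi,t)=\xi+\int_0^t u_{\epsilon}(X^{\epsilon}(\xi,s),s)\,ds$ and letting $\epsilon_i\to 0$, the obstacle is to pass the limit inside the integral and to identify $\lim_i u_{\epsilon_i}(X^{\epsilon_i}(\xi,s),s)$ with $u(X(\xi,s),s)$ for $\xi\in\mathbb{R}\setminus\mathcal{U}$, given that $u$ is only a measure-theoretic velocity carrying $\delta$-shocks and $u_{\epsilon}$ approximates it merely in an averaged sense. The key point is that for $\xi\notin\mathcal{U}$, by \eqref{mathcalU} the curve $X(\xi,\cdot)$ is single-valued and, by the countability of shock curves from (v), avoids $\varGamma_{0}$ for a.e.\ $t$; at such times the Oleinik bound forces $u(\cdot,t)$ to be approximately continuous at $X(\xi,t)$, so the mollification evaluated along the converging curve converges to the pointwise value, which licenses the identification and yields \eqref{6.1}.
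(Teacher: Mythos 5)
The first thing to note is that the paper does not prove this lemma at all: it is quoted verbatim from Wang--Ding \cite{WD1997} (``we have the following lemma as presented in \cite{WD1997}''), so you are supplying a proof where the paper supplies a citation. Your treatment of (i)--(iii) and (v) follows the standard route and is essentially sound, up to two repairable technicalities: for $t_1=0$ (which is exactly the case needed in (iv)) the limit map $\xi\mapsto X(\xi,t)$ genuinely has jumps --- that is precisely what $\mathcal{U}\neq\varnothing$ in \eqref{mathcalU} means --- so your ``monotone squeeze from a dense set of starting points'' cannot give \eqref{XXt1} at every $\xi$; one needs Helly's selection theorem there. Likewise in (iii) the subsequence $\{\epsilon_i\}$ must be arranged (by a further diagonalization) to work for the two initial times $t_2<t_1$ simultaneously.

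The genuine gap is in (iv), and it is not a detail. The set $\mathcal{U}$ records jumps of $\xi\mapsto X(\xi,t)$, i.e.\ rarefaction/vacuum behaviour; it says nothing about shocks. By your own statement (ii), merging is permanent: a characteristic with $\xi\notin\mathcal{U}$ that is absorbed into a $\delta$-shock at some time $t_0$ lies \emph{on} that shock curve for every $t\ge t_0$. Hence your claim that such a curve ``avoids $\varGamma_{0}$ for a.e.\ $t$'' is false whenever absorption occurs --- which is the generic behaviour in this model, and exactly the situation in which \eqref{6.1} is later used (in \eqref{phiepeq} and in Lemma \ref{uRNlem}). Moreover, \eqref{6.1} cannot be derived from boundedness and the Oleinik bound alone, which is all your argument invokes: take $u(x,t)=1$ for $x\le 0$ and $u(x,t)=-1$ for $x>0$. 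This $u$ is bounded and satisfies \eqref{ux2ux1}; the mollified flow converges to $X(\xi,t)=\mathrm{sgn}(\xi)\,(|\xi|-t)^{+}$, so $\mathcal{U}=\varnothing$, yet for any $\xi<0$ and all $t>|\xi|$ one has $\partial_t X(\xi,t)=0\neq 1=u(X(\xi,t),t)$, so \eqref{6.1} fails on a set of times of positive measure. The missing ingredient is the information that $u$ is the velocity of the weak (entropy) solution --- the weak momentum equation, equivalently the relation $q_x=um_x$ together with the weak continuity of $um_x$ --- which forces the value of $u$ at the atom carried by a $\delta$-shock to coincide with the shock speed; this is how the cited Wang--Ding argument identifies $\partial_t X$ with $u(X,t)$ along shocks. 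Since your sketch never uses the equation satisfied by $u$, part (iv) as written cannot be completed.
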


Similar to $F(y;x,t)$, we define
\begin{align}
H_{t_1}(y;x,t)=\int_{-\infty}^{y-} X_{t_1}(\eta,t)-x\,{\rm d}m(\eta,t_1),
\end{align}
which, by \eqref{xietat1}, yields
$$
\min_y H_{t_1}(y;x,t)=H(\xi_{t_1}(x,t);x,t).
$$
Define 
\begin{align}
w(x,t)=\int_{-\infty}^{\xi_{t_1}(x,t)-}\,{\rm d}m(\eta,t_1), \quad
q_2(x,t)=\int_{-\infty}^{\xi_{t_1}(x,t)-}u(X_{t_1}(\eta,t),t)\,{\rm d}m(\eta,t_1),
\end{align}
Similar to the Lemma 2.5 in Wang--Ding \cite{WD1997}, we have $q_{2x}=u w_x$ in the sense of Radon-Nikodym derivative.
On the other hand, it is easy to see that Lemma \ref{mqnulem} still holds here, then 
\begin{align*}
\frac{\partial}{\partial x}\min_y H_{t_1}(y;x,t)=-w(x,t), \quad 
\frac{\partial}{\partial t}\min_y H_{t_1}(y;x,t)=q_2(x,t),
\end{align*}
which implies $w(x,t)$ is a weak solution of \eqref{scalareq}.
Since the uniqueness of \eqref{scalareq} is proved in Theorem 3.1 of Huang-Wang \cite{huang2001well}, then $m(x,t)=w(x,t)$ almost everywhere in $t\geq t_1$, and we have the following proposition.

\begin{Pro}\label{scalarthm}
Suppose that $m(\eta,t_1) \in [0,M]$ is nondecreasing in $\eta$, $u(x,t)$ is bounded and satisfies the Oleinik condition \eqref{ux2ux1}. 
Then,  the Cauchy problem $\eqref{scalareq}$ admits a unique weak solution,  
which is given by
\begin{equation}\label{umeq}
m(x,t)=\int_{{-}\infty}^{\xi_{t_1}(x,t){-}} {\rm d}m(\eta,t_1),
\end{equation}
where $\xi_{t_1}(x,t)$ is given by $\eqref{xietat1}$.
\end{Pro}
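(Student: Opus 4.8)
The plan is to split the statement into an existence claim---that the formula \eqref{umeq} produces a weak solution of \eqref{scalareq} in the sense of Definition \ref{sweaklem}---and a uniqueness claim, which I would reduce to the scalar theory already in the literature. Because the transport velocity $u$ is only assumed bounded and to satisfy the Oleinik condition \eqref{ux2ux1}, with no closed form available, the entire construction must be driven by the generalized backward characteristics $X_{t_1}(\xi,t)$ furnished by Lemma \ref{reflem}; the Oleinik bound is exactly what guarantees that these characteristics may coalesce but never cross, so that the backward foot map $\xi_{t_1}(x,t)$ of \eqref{xietat1} is monotone and left-continuous in $x$ and the mass formula is well defined.

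For existence, I would first fix $t_1>0$, regularize by $u_\epsilon=u*j_\epsilon$, solve the flow \eqref{dxdt}, and pass to the limit to obtain $X_{t_1}(\xi,t)$ as in \eqref{XXt1}. Mirroring the first generalized potential $F$, I introduce
\[
H_{t_1}(y;x,t)=\int_{-\infty}^{y-} X_{t_1}(\eta,t)-x\,{\rm d}m(\eta,t_1),
\]
whose minimizer is exactly $\xi_{t_1}(x,t)$, and then set $w(x,t)=\int_{-\infty}^{\xi_{t_1}(x,t)-}{\rm d}m(\eta,t_1)$ together with the transported momentum $q_2(x,t)=\int_{-\infty}^{\xi_{t_1}(x,t)-}u(X_{t_1}(\eta,t),t)\,{\rm d}m(\eta,t_1)$. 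The proof then rests on three identities, to be established in order: (i) the Radon--Nikodym relation $q_{2x}=u\,w_x$, obtained as in Lemma 2.5 of Wang--Ding \cite{WD1997}; (ii) the potential-derivative relations $\partial_x\,\min_y H_{t_1}=-w$ and $\partial_t\,\min_y H_{t_1}=q_2$, which follow by repeating the argument of Lemma \ref{mqnulem} verbatim with $X_{t_1}$ in place of the explicit characteristic; and (iii) substituting (i) into \eqref{psimu} turns the flux term into $-\iint\psi\,{\rm d}q_2\,{\rm d}t$, so that with $\nu:=\min_y H_{t_1}$ and (ii) the weak form collapses to $\iint \nu_t\psi_x-\nu_x\psi_t\,{\rm d}x\,{\rm d}t=0$ by equality of the mixed distributional derivatives $\nu_{xt}=\nu_{tx}$. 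This shows $w$ is a weak solution.

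For uniqueness, I would observe that \eqref{scalareq} is a genuinely linear transport equation once $u$ is treated as a fixed datum obeying the Oleinik bound, so it falls precisely under the scalar uniqueness theory of Huang--Wang \cite{huang2001well} (their Theorem 3.1). Invoking that result yields $m(x,t)=w(x,t)$ for almost every $(x,t)$ with $t\ge t_1$, which is the asserted formula \eqref{umeq}.

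The hard part will be the rigorous justification of (i) and (ii) in the measure-theoretic setting: one must check that $u(X_{t_1}(\eta,t),t)$ is $m(\cdot,t_1)$-measurable along the flow, that the backward foot $\xi_{t_1}(x,t)$ inherits the monotonicity and one-sided continuity encoded in \eqref{6.2a}--\eqref{6.2b}, and that Radon--Nikodym differentiation commutes with the Lebesgue--Stieltjes integral defining $q_2$. The weak continuity of $w_x$ as $t\to t_1+$, needed so that $w$ qualifies under Definition \ref{sweaklem}, likewise requires care and follows from $\xi_{t_1}(x,t)\to x$ at continuity points of $m(\cdot,t_1)$; the remaining estimates are transcriptions of the pressureless-Euler arguments already carried out in \S 3.
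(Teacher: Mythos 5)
Your proposal is correct and follows essentially the same route as the paper: mollify $u$, pass to the generalized characteristics $X_{t_1}(\xi,t)$ via Lemma \ref{reflem}, introduce the potential $H_{t_1}(y;x,t)$ whose minimizer is $\xi_{t_1}(x,t)$, define $w$ and $q_2$, establish $q_{2x}=u\,w_x$ by Wang--Ding's Lemma 2.5 and the derivative relations of $\min_y H_{t_1}$ by the analogue of Lemma \ref{mqnulem}, and then invoke Theorem 3.1 of Huang--Wang \cite{huang2001well} for uniqueness. Your step (iii) simply makes explicit the mixed-derivative computation that the paper performs in \S 3.3 and leaves implicit at this point, so the two arguments coincide.
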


\smallskip
\noindent
{\bf 2.} We now prove that the $q(x,t)$ of the Cauchy problem $\eqref{mmaineq2}$ can be expressed by 
\begin{equation}\label{uqeq}
q(x,t)=\int_{{-}\infty}^{\xi_{t_1}(x,t){-}}u(\eta,t_1) e^{-\frac{t-t_1}{\tau}}{+}\widetilde{m}(\eta,t_1)(\tau e^{-\frac{t-t_1}{\tau}}-\tau) \,{\rm d}m(\eta,t_1).
\end{equation}

\begin{Lem}\label{uRNlem}
 Let $m=m(x,t)$ and $q=q(x,t)$ be given by $\eqref{umeq}$ and $\eqref{uqeq}$, respectively.
Then, $q_x=u(x,t)m_x$ in the sense of Radon--Nikodym derivatives. 
Furthermore, for any $x_1, x_2\in\mathbb{R}$, $t\geq t_1$ and $s\in [t_1,t]$, 
\begin{align}\label{Xt1u}
&\ \int_{\xi_{t_1}(x_1,t){\pm}}^{\xi_{t_1}(x_2,t){\pm}} X_{t_1}(\eta,s)\,{\rm d}m(\eta,t_1)\nonumber\\[1mm]
&=\int_{\xi_{t_1}(x_1,t){\pm}}^{\xi_{t_1}(x_2,t){\pm}}\eta{+}u(\eta,t_1)(\tau{-}\tau e^{-\frac{s-t_1}{\tau}})
{+}\widetilde{m}(\eta,t_1)\big(\tau^2{-}\tau^2 e^{-\frac{s-t_1}{\tau}}{-}\tau(s{-}t_1)\big)\,{\rm d}m(\eta,t_1).
\end{align}
\end{Lem}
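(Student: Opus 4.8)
The two assertions to prove are the Radon--Nikodym relation $q_x=u\,m_x$ and the integral identity \eqref{Xt1u}. The guiding observation is that the integrand appearing in \eqref{uqeq} and on the right of \eqref{Xt1u} is the \emph{free-flight} data of the velocity ODE: setting
\[
V(\eta,s):=u(\eta,t_1)e^{-\frac{s-t_1}{\tau}}+\widetilde{m}(\eta,t_1)\big(\tau e^{-\frac{s-t_1}{\tau}}-\tau\big),
\]
one checks directly that $\partial_s V=-\tfrac1\tau V-\widetilde{m}(\eta,t_1)$ with $V|_{s=t_1}=u(\eta,t_1)$, and that $Y(\eta,s):=\eta+\int_{t_1}^{s}V(\eta,r)\,{\rm d}r$ equals exactly the integrand on the right of \eqref{Xt1u}. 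Thus \eqref{Xt1u} asserts that, over the shock-respecting window $[\xi_{t_1}(x_1,t){\pm},\xi_{t_1}(x_2,t){\pm}]$, the generalized characteristics $X_{t_1}(\eta,s)$ carry the same $m(\cdot,t_1)$-weighted position as the free-flight curves $Y(\eta,s)$.

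The plan is to prove \eqref{Xt1u} first and deduce $q_x=u\,m_x$ from it. Fix $x_1<x_2$, write $\xi_i:=\xi_{t_1}(x_i,t){\pm}$, and set
\[
G(s):=\int_{\xi_1}^{\xi_2}\big(X_{t_1}(\eta,s)-Y(\eta,s)\big)\,{\rm d}m(\eta,t_1).
\]
Since $X_{t_1}(\eta,t_1)=\eta=Y(\eta,t_1)$ we have $G(t_1)=0$. Differentiating in $s$ and using $\partial_s X_{t_1}(\eta,s)=u(X_{t_1}(\eta,s),s)$ from Lemma \ref{reflem}(iv) together with $\partial_s Y=V$, the first step is to establish the cluster momentum balance
\[
\frac{{\rm d}}{{\rm d}s}\int_{\xi_1}^{\xi_2}u(X_{t_1}(\eta,s),s)\,{\rm d}m(\eta,t_1)
=\int_{\xi_1}^{\xi_2}\Big({-}\widetilde{m}(X_{t_1}(\eta,s),s)-\tfrac1\tau u(X_{t_1}(\eta,s),s)\Big)\,{\rm d}m(\eta,t_1),
\]
which I would extract from the weak momentum equation \eqref{weakeq} by inserting test functions concentrated along the characteristic tube between $\xi_1$ and $\xi_2$ (mass being transported, the $m(\cdot,t_1)$-integral is the genuine momentum of these particles). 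Combining the two displays gives
\[
G'(s)=-\tfrac1\tau G(s)-\int_{\xi_1}^{\xi_2}\big(\widetilde{m}(X_{t_1}(\eta,s),s)-\widetilde{m}(\eta,t_1)\big)\,{\rm d}m(\eta,t_1).
\]

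The heart of the argument is that the last integral vanishes for every $s\in[t_1,t]$. By the non-crossing property of Lemma \ref{reflem}(ii) the mass ordering of particles is preserved, so at time $s$ the label interval splits into free-flight points, along which $\widetilde{m}(X_{t_1}(\eta,s),s)=\widetilde{m}(\eta,t_1)$, and clusters, on each of which $X_{t_1}(\eta,s)$ is a common point $X^{\ast}$ and $\widetilde{m}(X^{\ast},s)$ is constant in $\eta$. The BV chain rule $\widetilde{m}\,{\rm d}m={\rm d}\big(\tfrac12 m^2-\tfrac M2 m\big)$ (cf. \eqref{3.4}) then shows that on a cluster $[\alpha,\beta]$ one has $\int_\alpha^\beta\widetilde{m}(X^{\ast},s)\,{\rm d}m=\int_\alpha^\beta\widetilde{m}(\eta,t_1)\,{\rm d}m$, since both equal $\tfrac12\big(m(\beta{+})^2-m(\alpha{-})^2\big)-\tfrac M2\big(m(\beta{+})-m(\alpha{-})\big)$; this is exactly the statement that the total Poisson force on a cluster is insensitive to the internal rearrangement of its mass. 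Because $\xi_1,\xi_2$ are boundaries of the clustering at time $t$ and clusters only coarsen as time increases, no cluster at any $s\le t$ straddles $\xi_1$ or $\xi_2$, so summing the cluster and free-flight contributions gives zero. Hence $G'(s)=-\tfrac1\tau G(s)$ with $G(t_1)=0$, forcing $G\equiv0$ and proving \eqref{Xt1u}. Finally, differentiating \eqref{Xt1u} in $s$ at $s=t$ yields $\int_{\xi_1}^{\xi_2}u(X_{t_1}(\eta,t),t)\,{\rm d}m=\int_{\xi_1}^{\xi_2}V(\eta,t)\,{\rm d}m$; since $q(x,t)=\int^{\xi_{t_1}(x,t){-}}V\,{\rm d}m$ and, by the Wang--Ding Radon--Nikodym argument (as in Lemma \ref{RNlem}), the averaged left-hand integrand tends to $u(x_0,t)$ as $[\xi_1,\xi_2]$ shrinks to $\{\xi_{t_1}(x_0,t)\}$ for a.e.\ $x_0$, we conclude $q_x=u\,m_x$. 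The main obstacle is the rigorous passage from the weak formulation \eqref{weakeq} to the cluster momentum balance in the presence of $\delta$-shocks, and the verification that the Poisson source integrates to zero over each shock-respecting window.
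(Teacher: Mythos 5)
Your proposal is correct and follows essentially the same route as the paper: the paper likewise derives the tube momentum balance (its \eqref{phiepeq}--\eqref{Xpeq}) by inserting cutoff test functions supported between the two bounding characteristics into the weak momentum equation, uses mass transport together with the BV chain rule $\widetilde{m}\,{\rm d}m={\rm d}\big(\tfrac12 m^2-\tfrac M2 m\big)$ to see that the Poisson force integral over the window is time-invariant, solves the resulting damped linear ODE for $h(s)=\int X_{t_1}(\eta,s)\,{\rm d}m(\eta,t_1)$ to obtain \eqref{Xt1u}, and then deduces $q_x=um_x$ by differentiating \eqref{Xt1u} at $s=t$ and shrinking the window exactly as you propose. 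The only blemish is notational: your displayed relation for $G'(s)$ should carry the Poisson discrepancy integrated in time (equivalently, be stated at the level of $G''$), but since you show that discrepancy vanishes identically, the conclusion $G\equiv 0$ and hence \eqref{Xt1u} is unaffected.
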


\begin{proof}
{\bf 1.} We first prove \eqref{Xt1u}.
For any $x_1, x_2\in\mathbb{R}$ and $t\geq t_1$, we denote
$$x_1(\zeta):=X_{t_1}(\xi_{t_1}(x_1,t),\zeta),\quad x_2(\zeta):=X_{t_1}(\xi_{t_1}(x_2,t),\zeta)
\qquad {\rm for}\ \zeta\in[t_1,t].$$
Then, $\xi_1:=\xi_{t_1}(x_1,\zeta)$ and $\xi_2:=\xi_{t_1}(x_2,\zeta)$ 
keep constant along curves $x_1(\zeta)$ and $x_2(\zeta)$ respectively.
Let $\phi_{1\epsilon}(x),\phi_{2\epsilon}(x) \in C^{\infty}(\mathbb{R})$ satisfy
$$\phi_{1\epsilon}(x)
=\begin{cases}
1\quad {\rm if}\ x \leq {-}\epsilon,\\[1mm] 
0\quad {\rm if}\ x \geq 0,
\end{cases}
\qquad
\phi_{2\epsilon}(x)=
\begin{cases}
1\quad {\rm if}\  x \leq 0,\\[1mm]
0\quad {\rm if}\  x \geq \epsilon,
\end{cases}
$$
and for any $\varphi(\zeta)\in C_0^{\infty}[t_1,t)$, we set
$$\psi_{\epsilon}=\phi_{\epsilon}(x,\zeta)\varphi(\zeta), \qquad 
\phi_{\epsilon}(x,\zeta)=\phi_{2\epsilon}(x-x_2(\zeta))-\phi_{1\epsilon}(x-x_1(\zeta)).$$
Then, by $\eqref{weakeq}_2$, we have 
$$\iint \psi_{\epsilon\zeta}u+\psi_{\epsilon x}u^2 \,{\rm d}m {\rm d}\zeta +\int \psi_{\epsilon}(\eta,t_1)u(\eta,t_1) \,{\rm d}m(\eta,t_1)=\iint (\widetilde{m}+\frac{u}{\tau})\psi_{\epsilon} \,{\rm d}m {\rm d}\zeta,$$
which, by a simple calculation, implies
\begin{align}\label{phiepeq}
&\ \iint \varphi_{\zeta} \phi_{\epsilon}u-(\widetilde{m}{+}\frac{u}{\tau})\varphi \phi_{\epsilon} \,{\rm d}m {\rm d}\zeta +\int \varphi(t_1)\phi_{\epsilon}(\eta,t_1)u(\eta,t_1) \,{\rm d}m(\eta,t_1)\nonumber\\[1mm]
& =-\int_{t_1}^t \int_{x_1(\zeta){-}\epsilon}^{x_1(\zeta)}\varphi u \phi_{1\epsilon}'(x_1'(\zeta)-u) \,{\rm d}m {\rm d}\zeta
+\int_{t_1}^t \int_{x_2(\zeta)}^{x_2(\zeta){+}\epsilon}\varphi u \phi_{2\epsilon}'(x_2'(\zeta)-u) \,{\rm d}m {\rm d}\zeta.
\end{align}
It follows from \eqref{6.1} that the right hand of $\eqref{phiepeq}$ tends to zero as $\epsilon\rightarrow 0$, 
by letting $\epsilon \rightarrow 0$ in $\eqref{phiepeq}$, we obtain
\begin{equation}
\label{phiueq}
\int_{t_1}^t \int_{x_1(\zeta){-}}^{x_2(\zeta){+}}\varphi_{\zeta}u-(\widetilde{m}{+}\frac{u}{\tau})\varphi \,{\rm d}m {\rm d}\zeta +\int_{\xi_1{-}}^{\xi_2{+}}\varphi(t_1)u(\eta,t_1) \,{\rm d}m(\eta,t_1)=0.
\end{equation}

For any $s\in [t_1,t)$ and $\delta\in (0,t-s)$, by taking $\varphi(\zeta)=\varphi_{\delta}(\zeta)$ defined by 
\begin{equation*}
\varphi_{\delta}(\zeta)=
\begin{cases}
1\quad {\rm if}\ t_1 \leq \zeta \leq s,\\[1mm]
0\quad {\rm if}\ \zeta \geq s+\delta.
\end{cases}
\end{equation*}
into $\eqref{phiueq}$ and letting $\delta \rightarrow 0$, we obtain
\begin{equation}\label{Xpeq}
\int_{\xi_1{-}}^{\xi_2{+}}\frac{1}{\tau}(X_{t_1}(\eta,s)-X_{t_1}(\eta,t_1))+u(X_{t_1}(\eta,s),s)-u(X_{t_1}(\eta,t_1),t_1)+(s-t_1)\widetilde{m}(\eta,t_1)\,{\rm d}m(\eta,t_1)=0,
\end{equation}
where we used the fact that,
$$\int_{x_1(s){-}}^{x_2(s){+}}\widetilde{m}(x,s) \,{\rm d}m(x,s)=\int_{\xi_1{-}}^{\xi_2{+}}\widetilde{m}(\eta,t_1)\,{\rm d}m(\eta,t_1).$$
Let 
\begin{align*}
h(s)=\int_{\xi_1-}^{\xi_2+} X_{t_1}(\eta,s)\,{\rm d}m(\eta,t_1),
\end{align*}
then \eqref{Xpeq} transforms into
\begin{align}
\frac{1}{\tau}(h(s)-h(t_1))+h'(s)-h'(t_1)+(s-t_1)\int_{\xi_1-}^{\xi_2+}\widetilde{m}(\eta,t_1)\,{\rm d}m(\eta,t_1)=0.
\end{align}
Solving this ODE implies \eqref{Xt1u} with the lower limit being $\xi_1{-}$ and upper limit being $\xi_2{+}$. Similarly, it can be checked that $\eqref{Xt1u}$ holds for other cases.

\smallskip
\noindent
{\bf 2.} Now we prove $q_x=u(x,t)m_x$ in the sense of Radon-Nikodym sense.
For any $(x,t)$, it follows from Lemma \ref{reflem} (ii), (v) that 
\begin{align}
X_{t_1}(\xi_{t_1}(x,t),t)=X_{t_1}(\xi_{t_1}(x{-},t),t)=X_{t_1}(\eta_{t_1}(x,t),t)=X_{t_1}(\xi_{t_1}(x{+},t),t)=x.
\end{align}
Then for $x_1<x<x_2$, by \eqref{Xt1u} and \eqref{6.1},
\begin{align}
&\lim_{x_1,x_2\rightarrow x{\pm}}\frac{q(x_2,t)-q(x_1,t)}{m(x_2,t)-m(x_1,t)}\nonumber\\[1mm]
&=\lim_{x_1,x_2\rightarrow x{\pm}}\frac{\int_{\xi_{t_1}(x_1,t){\pm}}^{\xi_{t_1}(x_2,t){\pm}}u(\eta,t_1) e^{-\frac{t-t_1}{\tau}}{+}\widetilde{m}(\eta,t_1)(\tau e^{-\frac{t-t_1}{\tau}}-\tau)\,{\rm d}m(\eta,t_1)}
{\int_{\xi_{t_1}(x_1,t){\pm}}^{\xi_{t_1}(x_2,t){\pm}}\,{\rm d}m(\eta,t_1)}\nonumber\\[1mm]
&=\lim_{x_1,x_2\rightarrow x{\pm}} \frac{\int_{\xi_{t_1}(x_1,t){\pm}}^{\xi_{t_1}(x_2,t){\pm}} X'_{t_1}(\eta,t)\,{\rm d}m(\eta,t_1)}{\int_{\xi_{t_1}(x_1,t){\pm}}^{\xi_{t_1}(x_2,t){\pm}}\,{\rm d}m(\eta,t_1)}=u(x,t).
\end{align}

\end{proof}

\subsection{Formula of entropy solution with respect to intial data.}

For any entropy solution $(m,u)=(m,\frac{{\rm d}q}{{\rm d}m})$, we have the formulas \eqref{umeq} and \eqref{uqeq} depending on the values of entropy solution in $t=t_1$.
Noticing that $t_1>0$ is arbitrary, so that by letting $t_1\rightarrow 0{+}$ we can obtain the formula of entropy solutions with respect to initial data.

\smallskip
\noindent
{\bf 1.}
For any point $(x_0,t_0)$, we choose a decreasing sequence $\{t_i: i=1,2 \cdots \}$ such that $t_i \rightarrow 0{+}$ as $i \rightarrow \infty$.
By Lemma \ref{reflem}(iii), 
$$
\Big|\frac{\xi_{t_i}(x_0,t_0)-\xi_{t_j}(x_0,t_0)}{t_i-t_j}\Big|\le U,
$$
then $\{\xi_{t_i}(x_0,t_0): i=1,2 \cdots\}$ is a Cauchy sequence and assume 
\begin{equation}\label{xix0t0}
\xi_{t_i}(x_0,t_0)\rightarrow \xi(x_0,t_0) \qquad\text{as $i \rightarrow \infty$}. 
\end{equation}
As $t_i\rightarrow 0+$,  $X_{t_i}(\xi_{t_i}(x_0,t_0),t)$ form a curve originated from $\xi(x_0,t_0)$ and terminated at $(x_0,t_0)$. We denote it as $X(\xi(x_0,t_0);x_0,t_0)$.
It follows from the boundedness of $u(x,t)$ and the weak continuity of $u(x,t) m_x$ that the limit $\lim_{t_i\rightarrow 0+}u(\xi_{t_i}(x_0,t_0),t_i)$ exists, and denote it as $c(\xi(x_0,t_0);x_0,t_0)$.

\begin{Lem}\label{xicon}
For any point $(x_0,t_0) \in \mathbb{R}\times \mathbb{R}^+$, 
\begin{itemize}
\item [(i)] If $[m_0(\xi(x_0,t_0))]=m_0(\xi(x_0,t_0){+}){-}m_0(\xi(x_0,t_0){-}){=}0$,
or $[m_0(\xi(x_0,t_0))]{>}0$ with $\xi(x_0,t_0)\in \mathbb{R} \backslash \mathcal{U}$ defined in \eqref{mathcalU},
then
\begin{align}
&m(x_0,t_0)=m_0(\xi(x_0,t_0)=\int_{{-}\infty}^{\xi(x_0,t_0){-}}\,{\rm d}m_0(\eta),\label{mx0t0}\\[1mm]
&q(x_0,t_0)=\int_{{-}\infty}^{\xi(x_0,t_0){-}}u_0(\eta)e^{-\frac{t}{\tau}}{+}\widetilde{m}_0(\eta)(\tau e^{-\frac{t}{\tau}}{-}\tau)\,{\rm d}m_0(\eta).\label{qx0t0}
\end{align}
\item[(ii)] If $\xi(x_0,t_0) \in \mathcal{U}$ with $[m_0(\xi(x_0,t_0))]{>}0$, then
\begin{align}
&m(x_0,t_0)=
\begin{cases}
\int_{{-}\infty}^{\xi(x_0,t_0){-}} \,{\rm d}m_0(\eta)
\quad {\rm if} \ c(\xi(x_0,t_0);x_0,t_0)\leq u_0(\xi(x_0,t_0)),\\[2mm]
\int_{{-}\infty}^{\xi(x_0,t_0){+}} \,{\rm d}m_0(\eta)
 \quad {\rm if} \ c(\xi(x_0,t_0);x_0,t_0)> u_0(\xi(x_0,t_0)).
\end{cases}\label{mx0t02}\\[1mm]
&q(x_0,t_0)=\nonumber\\[1mm]
&\begin{cases}
\int_{{-}\infty}^{\xi(x_0,t_0){-}}u_0(\eta)e^{-\frac{t}{\tau}}{+}\widetilde{m}_0(\eta)(\tau e^{-\frac{t}{\tau}}{-}\tau) \,{\rm d}m_0(\eta)
\quad {\rm if} \ c(\xi(x_0,t_0);x_0,t_0){\leq} u_0(\xi(x_0,t_0)),\\[2mm]
\int_{{-}\infty}^{\xi(x_0,t_0){+}}u_0(\eta)e^{-\frac{t}{\tau}}{+}\widetilde{m}_0(\eta)(\tau e^{-\frac{t}{\tau}}{-}\tau) \,{\rm d}m_0(\eta)
\quad {\rm if} \ c(\xi(x_0,t_0);x_0,t_0){>} u_0(\xi(x_0,t_0)).
\end{cases}\label{qx0t02}
\end{align}

\end{itemize}
\end{Lem}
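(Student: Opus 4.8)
The plan is to obtain \eqref{mx0t0}--\eqref{qx0t02} by letting $t_1=t_i\to 0{+}$ in the representation formulas valid for positive initial time. By Proposition \ref{scalarthm} and Lemma \ref{uRNlem}, for each $i$,
\begin{align*}
m(x_0,t_0)&=\int_{-\infty}^{\xi_{t_i}(x_0,t_0){-}}\,{\rm d}m(\eta,t_i),\\
q(x_0,t_0)&=\int_{-\infty}^{\xi_{t_i}(x_0,t_0){-}}u(\eta,t_i)e^{-\frac{t_0-t_i}{\tau}}{+}\widetilde{m}(\eta,t_i)\big(\tau e^{-\frac{t_0-t_i}{\tau}}{-}\tau\big)\,{\rm d}m(\eta,t_i),
\end{align*}
and by \eqref{xix0t0} we have $\xi_{t_i}(x_0,t_0)\to\xi(x_0,t_0)$. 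Since the left-hand sides do not depend on $i$, everything reduces to computing the limits of the right-hand sides.

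First I would pass to the limit inside the integrals. By Definition \ref{weakdef} the measures $m_x$ and $um_x$ are weakly continuous at $t=0$, so $m(\cdot,t_i)\rightharpoonup m_0$ and $u(\cdot,t_i)\,{\rm d}m(\cdot,t_i)\rightharpoonup u_0\,{\rm d}m_0$; using the BV chain rule $\widetilde{m}\,{\rm d}m={\rm d}\big(\tfrac12 m^2-\tfrac{M}{2}m\big)$ one also gets $\widetilde{m}(\cdot,t_i)\,{\rm d}m(\cdot,t_i)\rightharpoonup\widetilde{m}_0\,{\rm d}m_0$. Together with $e^{-(t_0-t_i)/\tau}\to e^{-t_0/\tau}$ and a sandwiching argument at continuity points $a<\xi(x_0,t_0)<b$ of $m_0$, this shows both right-hand sides converge to the corresponding integrals against ${\rm d}m_0$ up to either $\xi(x_0,t_0){-}$ or $\xi(x_0,t_0){+}$; the sole remaining task is to decide which endpoint.

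The endpoint is fixed by a case split on the mass at $\xi_0:=\xi(x_0,t_0)$. If $[m_0(\xi_0)]=0$, the two endpoints agree and \eqref{mx0t0}--\eqref{qx0t0} follow at once. If $[m_0(\xi_0)]>0$ but $\xi_0\in\mathbb{R}\backslash\mathcal{U}$, then by the definition \eqref{mathcalU} of $\mathcal{U}$ and Lemma \ref{reflem}(v) the characteristic from $\xi_0$ does not split, so its entire mass forms a single $\delta$-shock arriving at $x_0$; since $m$ is left-continuous by the convention \eqref{mPhix}, this mass is excluded from $m(x_0,t_0)$, yielding the left-endpoint formulas \eqref{mx0t0}--\eqref{qx0t0}. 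If instead $\xi_0\in\mathcal{U}$ with $[m_0(\xi_0)]>0$, the characteristic splits and only part of the mass reaches $x_0$; here I would compare the limiting speed $c(\xi_0;x_0,t_0)=\lim_{t_i\to0{+}}u(\xi_{t_i}(x_0,t_0),t_i)$ with $u_0(\xi_0)$, exactly as in Lemma \ref{minlem}. When $c\le u_0(\xi_0)$ the mass lies at or to the right of $x_0$ and is excluded, giving the first (left-endpoint) alternative in \eqref{mx0t02} and \eqref{qx0t02}; when $c>u_0(\xi_0)$ it lies to the left and is included, giving the second (right-endpoint) alternative.

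The main obstacle is this last case. One must prove rigorously that the side from which $\xi_{t_i}(x_0,t_0)$ approaches $\xi_0$---hence whether the point mass is captured by $m(x_0,t_0)$---is dictated precisely by the sign of $c(\xi_0;x_0,t_0)-u_0(\xi_0)$. This amounts to combining the weak continuity of $um_x$, which identifies the limiting velocity $c$, with the splitting geometry of the approximate characteristics $X_{t_i}(\cdot,t)$ from Lemma \ref{reflem}, so as to recover in the limit $t_i\to0{+}$ the minimizer-selection dichotomy of Lemma \ref{minlem}.
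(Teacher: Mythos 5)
Your setup (limiting $t_i\to 0{+}$ in the representation formulas of Proposition \ref{scalarthm} and Lemma \ref{uRNlem}, weak convergence of $u\,{\rm d}m$ and $\widetilde m\,{\rm d}m$ via the BV chain rule, and the case split on $[m_0(\xi_0)]$) matches the paper's architecture, and your dichotomy in case (ii) is stated in the correct direction. However, there is a genuine gap exactly at the point you flag as ``the main obstacle,'' and the tools you propose to close it are insufficient. You suggest that the selection of the endpoint in \eqref{mx0t02}--\eqref{qx0t02} can be recovered by ``combining the weak continuity of $um_x$ \ldots with the splitting geometry of the approximate characteristics.'' It cannot: weak continuity of mass and momentum alone does not rule out that the point mass at $\xi_0$ splits into a fan of characteristics with several distinct limiting speeds whose $m$-average is $u_0(\xi_0)$ (for example, two chunks with speeds $c_1<u_0(\xi_0)<c_2$ and matching momentum). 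In such a scenario part of the mass could land on either side of $x_0$, and neither alternative of \eqref{mx0t02} would hold, so no argument using only mass and momentum continuity plus geometry can dictate the dichotomy.

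The missing idea is the entropy condition (ii) of Definition \ref{entropylem}: the weak continuity of the energy measure $\rho u^2$ at $t=0$. The paper's proof sets $x_1(t)=X(\xi_0{-},t)$, $x_2(t)=X(\xi_0{+},t)$, $c_i=\lim_{t\to 0{+}}x_i'(t)$, builds the monotone function $m(c)$ recording how the mass of the fan is distributed over limiting speeds $c\in[c_1,c_2]$, and uses the weak continuity at $t=0$ of \emph{all three} measures $m_x$, $um_x$, $u^2m_x$ to obtain the moment identities
\begin{equation*}
[m_0(\xi_0)]=\int_{c_1{-}0}^{c_2{+}0}{\rm d}m(c),\qquad
[m_0(\xi_0)]\,u_0(\xi_0)=\int_{c_1{-}0}^{c_2{+}0}c\,{\rm d}m(c),\qquad
[m_0(\xi_0)]\,u_0(\xi_0)^2=\int_{c_1{-}0}^{c_2{+}0}c^2\,{\rm d}m(c),
\end{equation*}
whence $\int_{c_1{-}0}^{c_2{+}0}\bigl(c-u_0(\xi_0)\bigr)^2\,{\rm d}m(c)=0$. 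This variance identity forces the entire point mass to leave $\xi_0$ with the single speed $u_0(\xi_0)$, and only then does the comparison of $c(\xi_0;x_0,t_0)$ with $u_0(\xi_0)$ decide whether that mass has been collected at $(x_0,t_0)$, i.e.\ which endpoint appears. This is precisely why the uniqueness theorem needs the initial weak continuity of the kinetic energy; your proposal never invokes it, so the crucial case remains unproved.
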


\begin{proof}
{\bf 1.} First we prove the formula of $m(x_0,t_0)$. The proof is divided into two parts accordingly.

\smallskip
\noindent
{\bf (a).}
For any point $(x_0,t_0)$,  it follows from Lemma \ref{reflem} (ii) and (v) that $\xi_{t_i}(x,t)\equiv \xi_{t_i}(x_0,t_0)$ for $(x,t)\in X_{t_i}(\xi_{t_i}(x_0,t_0),t)$, 
which, by \eqref{umeq}, implies $m(x,t)\equiv m(x_0,t_0)=m(\xi_{t_i}(x_0,t_0),t_i)$ along $X_{t_i}(\xi_{t_i}(x_0,t_0),t)$. 

Since $m(x,t)\equiv m(x_0,t_0)$ along $X(\xi(x_0,t_0);x_0,t_0)$, if $[m_0(\xi(x_0,t_0))]=0$, by the weak continuity of $\rho=m_x$ at $t=0$, \eqref{mx0t0} holds.
If $[m_0(\xi(x_0,t_0))]>0$ with $\xi(x_0,t_0)\in \mathbb{R}\backslash \mathcal{U}$, by the left continuity of $m(x,t)$ in \eqref{umeq}, \eqref{mx0t0} also holds.

\smallskip
\noindent
{\bf (b).} 
Without loss of generality, assume that $\xi(x_0,t_0)=0$.
We denote $x_1(t)=X(0{-},t)$ and $x_2(t)=X(0{+},t)$ for $t\in [0,t_0].$
Then for any $\bar{x} \in [x_1(t_0),x_2(t_0)]$, $\xi(\bar{x},t_0)=0$ and $m(x,t)\equiv m(\bar{x},t_0)$ for $(x,t)\in X(0;\bar{x},t_0)$.

Let $c_1=\lim_{t \rightarrow 0{+}}x_1'(t)$ and $c_2=\lim_{t \rightarrow 0{+}}x_2'(t)$.
We define 
\begin{equation}\label{mc1c2}
m(c)=
\begin{cases}
\lim_{t \rightarrow 0{+}}m(x_1(t){-},t) &\quad {\rm if}\ c <c_1,\\[1mm]
m(x,t)& \quad {\rm if}\  c_1<c<c_2,\\[1mm]
\lim_{t \rightarrow 0{+}}m(x_2(t){+},t) &\quad {\rm if}\ c >c_2.
\end{cases} 
\end{equation}

By choosing smooth function $\varphi_{\epsilon}$ such that 
$\varphi_{\epsilon}=1$ in $(x_1(t){-}\epsilon,x_2(t){+}\epsilon)$ and $\varphi_{\epsilon}=0$
in $({-}\infty,x_1(t)-2\epsilon)\cup (x_2(t)+2\epsilon,\infty)$, we have 
\begin{align}\label{varphieq}
\int \varphi_{\epsilon} \,{\rm d}m(x,t)
& =\int_{x_1(t)-2\epsilon}^{x_1(t){-}0}\varphi_{\epsilon} \,{\rm d}m(x,t)+[m(x_1(t),t)]
\nonumber\\[1mm]
&\quad \,\, +\int_{c_1{+}0}^{c_2{-}0} \,{\rm d}m(c) +[m(x_2(t),t)]
+\int_{x_2(t){+}0}^{x_2(t)+2\epsilon} \varphi_{\epsilon} \,{\rm d}m(x,t),
\end{align}
where $[m(x,t)]=m(x{+},t)-m(x{-},t)$.

Since the first and fifth term of the right-hand side of $\eqref{varphieq}$ tend to zero as $\epsilon \rightarrow 0$ 
and $m_x$ is weakly continuous at $t=0$, by let $t \rightarrow 0{+}$,  
\begin{align}\label{m0diseq}
[m_0(0)]
&=\lim_{t \rightarrow 0{+}}[m(x_1(t),t)]+\int_{c_1{+}0}^{c_2{-}0} \,{\rm d}m(c)
+\lim_{t \rightarrow 0{+}}[m(x_2(t),t)]
=\int_{c_1{-}0}^{c_2{+}0} \,{\rm d}m(c).
\end{align}
Similarly, by the weak continuity of momentum $u m_x$ and energy $u^2m_x$ at $t=0$,
\begin{equation}\label{energyeq}
[m_0(0)]u_0(0)=\int_{c_1{-}0}^{c_2{+}0}c \,{\rm d}m(c),\qquad [m_0(0)]u_0(0)^2=\int_{c_1{-}0}^{c_2{+}0}c^2 \,{\rm d}m(c).
\end{equation}
Combining $\eqref{m0diseq}$--$\eqref{energyeq}$, we have 
\begin{equation}\label{cu0eq}
 \int_{c_1{-}0}^{c_2{+}0}(c-u_0(0))^2 \,{\rm d}m(c)=0.
\end{equation}
Since $m(c)$ is increasing in $c$, then $m(c)$ must be discontinuous at $c=u_0(0)$. 
From $\eqref{energyeq}$, $c_1 \leq u_0(0) \leq c_2$ so that 
$m(c)$ takes two values determined by $c=u_0(0)$, which yields \eqref{mx0t02}.

\smallskip
\noindent
{\bf 2.} For the formula of $q(x_0,t_0)$, notice that $m_x$ and $u m_x$ are weakly continuous at $t=0$, then as $t_1\rightarrow 0$, 
\begin{align}
&u(\eta,t_1)\, {\rm d}m(\eta,t_1)\rightharpoonup u_0(\eta)\,{\rm d}m_0(\eta), \label{6.3a}\\[1mm]
&\widetilde{m}(\eta,t_1)\, {\rm d}m(\eta,t_1)={\rm d}\big(\frac{1}{2}m^2(\eta,t_1){-}\frac{M}{2}m(\eta,t_1)\big) \rightharpoonup {\rm d}\big(\frac{1}{2}m_0^2(\eta){-}\frac{M}{2}m_0(\eta)\big)=\widetilde{m}_0(\eta)\,{\rm d}m_0(\eta).\label{6.3b}
\end{align}
Similar argument as $m(x_0,t_0)$ yields \eqref{qx0t0} and \eqref{qx0t02}.
\end{proof}

Since $t_1>0$ is arbitrary, then Lemma \ref{uRNlem} still holds and $q_x=u(x,t)m_x$ for any $t>0$ in the sense of Radon-Nikodym derivatives, so that $(m(x,t),q(x,t))$ is an entropy solution.

\subsection{Uniqueness of entropy solution.}
Since any entropy solution $(m(x,t),q(x,t))$ can be expressed by \eqref{mx0t0}--\eqref{qx0t02}, if we can show that $\xi(x_0,t_0)\in S(x,t)$ is a minimum point of $F(y;x,t)$, then by Lemma \ref{sigmasol}, the uniqueness of entropy solutions is proved.

First we define a generalized potential originated from $t_1$:
\begin{align}
F_{t_1}(y;x,t){=}\int_{-\infty}^{y-} \eta{+}u(\eta,t_1)(\tau-\tau e^{-\frac{t-t_1}{\tau}}){+}\widetilde{m}(\eta,t_1)(\tau^2{-}\tau^2 e^{-\frac{t-t_1}{\tau}}{-}\tau(t-t_1)){-}x \,{\rm d}m(\eta,t_1).
\end{align}

\begin{Lem}
For any $(x,t)$ with $t>t_1$, 
\begin{equation}\label{minFt1}
\min_y F_{t_1}(y;x,t)=F_{t_1}(\xi_{t_1}(x,t);x,t).
\end{equation}
By letting $t_1\rightarrow 0$, then 
\begin{align}\label{6.3}
\xi_{t_1}(x,t)\rightarrow \xi(x,t)\in S(x,t),
\end{align}
where $\xi(x,t)$, $S(x,t)$ is given in \eqref{xix0t0} and \eqref{nuSeq}, respectively. 
\end{Lem}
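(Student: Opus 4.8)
The plan is to first reduce the minimization identity \eqref{minFt1} to the minimization property already recorded for $H_{t_1}(y;x,t)$, and then to pass to the limit $t_1\to 0{+}$ using the weak continuity of the mass and momentum at $t=0$. For \eqref{minFt1}, the key observation is that $F_{t_1}$ coincides with $H_{t_1}$. Indeed, taking $s=t$ in \eqref{Xt1u} shows that the two measures $X_{t_1}(\eta,t)\,{\rm d}m(\eta,t_1)$ and $\big(\eta+u(\eta,t_1)(\tau-\tau e^{-\frac{t-t_1}{\tau}})+\widetilde{m}(\eta,t_1)(\tau^2-\tau^2 e^{-\frac{t-t_1}{\tau}}-\tau(t-t_1))\big)\,{\rm d}m(\eta,t_1)$ have equal integrals over every interval of the form $[\xi_{t_1}(x_1,t),\xi_{t_1}(x_2,t)]$; letting $x_2\downarrow x_1$ (and covering the flat regions of $X_{t_1}(\cdot,t)$ by suitable endpoints) gives that the integrands agree ${\rm d}m(\cdot,t_1)$-almost everywhere, so $F_{t_1}(y;x,t)=H_{t_1}(y;x,t)=\int_{-\infty}^{y{-}}(X_{t_1}(\eta,t)-x)\,{\rm d}m(\eta,t_1)$ for all $y$.

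Since $X_{t_1}(\cdot,t)$ is nondecreasing (characteristics do not cross, Lemma \ref{reflem}), the integrand $X_{t_1}(\eta,t)-x$ is nonpositive for $\eta<\xi_{t_1}(x,t)$ and nonnegative for $\eta>\eta_{t_1}(x,t)$ by \eqref{6.2b}; hence $y\mapsto F_{t_1}(y;x,t)$ decreases up to $\xi_{t_1}(x,t)$ and increases afterwards, so it is minimized at $\xi_{t_1}(x,t)$, which yields \eqref{minFt1}, exactly as in the analysis of $F(y;x,t)$ at $y_*(x,t)$.

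For the limit, the convergence $\xi_{t_1}(x,t)\to\xi(x,t)$ is already guaranteed by \eqref{xix0t0}: the bound $|(\xi_{t_i}(x,t)-\xi_{t_j}(x,t))/(t_i-t_j)|\le U$ from Lemma \ref{reflem}(iii) makes $\{\xi_{t_1}(x,t)\}$ Cauchy as $t_1\to 0{+}$, with a limit independent of the sequence. It remains to prove $\xi(x,t)\in S(x,t)$. First I would show that $F_{t_1}(y;x,t)\to F(y;x,t)$ as $t_1\to 0{+}$ for each fixed $y$: this follows from the weak convergences \eqref{6.3a}--\eqref{6.3b} together with ${\rm d}m(\cdot,t_1)\rightharpoonup{\rm d}m_0$ and $t-t_1\to t$, so that each coefficient in the integrand of $F_{t_1}$ converges to the corresponding coefficient of $F$ given by \eqref{potentialF}.

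Finally, I would pass to the limit in the inequality $F_{t_1}(\xi_{t_1}(x,t);x,t)=\min_{y}F_{t_1}(y;x,t)\le F_{t_1}(y;x,t)$, valid for every $y$ by \eqref{minFt1}. The right-hand side tends to $F(y;x,t)$, while the left-hand side tends to $F(\xi(x,t){\pm};x,t)$; consequently $F(\xi(x,t){\pm};x,t)\le F(y;x,t)$ for all $y$, i.e. $F(\xi(x,t){\pm};x,t)=\nu(x,t)$, which by the definition of $S(x,t)$ in \eqref{nuSeq} gives $\xi(x,t)\in S(x,t)$, the assertion \eqref{6.3}. The main obstacle is precisely this last joint limit: since $F(\cdot;x,t)$ may jump where $\rho_0$ has an atom, so that only the one-sided values $F(\xi{\pm};x,t)$ survive, I must upgrade the pointwise convergence $F_{t_1}\to F$ to convergence uniform on a neighborhood of $\xi(x,t)$ in order to evaluate it at the moving point $\xi_{t_1}(x,t)$. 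This I would handle by splitting $F_{t_1}(\xi_{t_1};x,t)-F(\xi{\pm};x,t)$ into $F_{t_1}(\xi_{t_1};x,t)-F(\xi_{t_1};x,t)$ and $F(\xi_{t_1};x,t)-F(\xi{\pm};x,t)$, controlling the first by the locally uniform convergence of the cumulative integrals at continuity points together with the boundedness of the integrands, and the second by the one-sided continuity of $F(\cdot;x,t)$, with the possible atom at $\xi(x,t)$ treated by approaching from the appropriate side.
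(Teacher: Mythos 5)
Your overall strategy is the same as the paper's: use Lemma \ref{uRNlem} (identity \eqref{Xt1u} with $s=t$) to convert differences of $F_{t_1}$ into integrals of $X_{t_1}(\eta,t)-x$, deduce the minimization at $\xi_{t_1}(x,t)$ from the monotonicity of $X_{t_1}(\cdot,t)$ and \eqref{6.2b}, and then pass to the limit $t_1\to 0$ using the weak continuity statements \eqref{6.3a}--\eqref{6.3b} together with the one-sided continuity of $F(\cdot\,;x,t)$. In fact, your treatment of the final joint limit (splitting $F_{t_1}(\xi_{t_1};x,t)-F(\xi(x,t){\pm};x,t)$ into a convergence-of-potentials part and a one-sided-continuity part) is more careful than the paper's, which simply asserts that $F_{t_1}(\xi_{t_1}(x,t);x,t)$ converges to $F(\xi(x,t);x,t)$ or $F(\xi(x,t){+};x,t)$.

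However, one intermediate step of your argument is not correct as stated: the claim that letting $x_2\downarrow x_1$ in \eqref{Xt1u} forces the two integrands to agree ${\rm d}m(\cdot,t_1)$-a.e., hence $F_{t_1}\equiv H_{t_1}$ pointwise. The endpoints available in \eqref{Xt1u} are values of $\xi_{t_1}(\cdot,t)$ (and their one-sided limits), and these never fall in the open interior of an interval $(\alpha,\beta)$ that has collapsed into a single $\delta$-shock by time $t$; no choice of "suitable endpoints" can subdivide such a block. On that block $X_{t_1}(\cdot,t)$ is constant, equal to the shock position, while the free-streaming integrand $\eta+u(\eta,t_1)(\tau-\tau e^{-\frac{t-t_1}{\tau}})+\widetilde{m}(\eta,t_1)(\cdots)$ is generally not constant (for two colliding point masses the two measures assign different weights to each atom, only their sums agreeing by conservation of momentum). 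So the measures coincide only in aggregate over each collapsed block, not pointwise, and $F_{t_1}(y)=H_{t_1}(y)$ can fail for $y$ interior to a block. The identity that is actually available — and is what the paper invokes — is the difference formula with characteristic endpoints; for $y$ interior to a collapsed block the minimization inequality requires a separate (short) argument, e.g. comparing with the block's left endpoint $\alpha$ and using that the left sub-block's mass-averaged free-streaming position lies to the right of the shock (which is exactly why the sub-block has merged). The paper passes over this point silently by applying Lemma \ref{uRNlem} with an arbitrary upper limit $y$, so your conclusion matches the paper's, but the a.e.-identification shortcut should be removed and replaced by the difference identity plus this block-interior estimate.
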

 
\begin{proof}
It follows from Lemma $\ref{uRNlem}$ that
\begin{align*}
&\ F_{t_1}(y;x,t)-F_{t_1}(\xi_{t_1}(x,t);x,t)\\[1mm]
&=\int^{y{-}}_{\xi_{t_1}(x,t){-}} \eta{+}u(\eta,t_1)(\tau-\tau e^{-\frac{t-t_1}{\tau}}){+}\widetilde{m}(\eta,t_1)(\tau^2{-}\tau^2 e^{-\frac{t-t_1}{\tau}}{-}\tau(t-t_1)){-}x \,{\rm d}m(\eta,t_1)\\[1mm]
&=\int_{\xi_{t_1}(x,t){-}}^{y{-}} X_{t_1}(\eta,t)-x \,{\rm d}m(\eta,t_1). 
\end{align*}
Then, $\eqref{minFt1}$ follows from \eqref{6.2b} and the facts that $X_{t_1}(\eta,t)$ is increasing in $\eta$. 

Letting $t_1\rightarrow 0$ in \eqref{minFt1}, it follows from \eqref{6.3a}--\eqref{6.3b} and the left continuity of $F(\cdot;x,t)$ that 
\begin{align*}
F_{t_1}(\xi_{t_1}(x,t);x,t)\rightarrow F(\xi(x,t);x,t) &\leq F(y;x,t), \\[1mm]
{\rm or}\ F_{t_1}(\xi_{t_1}(x,t);x,t)\rightarrow F(\xi(x,t){+};x,t) &\leq F(y;x,t),
\end{align*}
which implies \eqref{6.3}.

\end{proof}

\smallskip
Up to now, we have completed the proof of Theorem $\ref{UniThm}$.


\medskip

\end{document}